\documentclass[11pt,reqno]{amsart}
\usepackage[T1]{fontenc}
\usepackage[utf8]{inputenc}
\usepackage{lmodern,amsmath,amssymb,amsthm,mathtools}
\usepackage[a4paper,margin=27mm]{geometry}
\usepackage{microtype,enumitem,needspace}
\usepackage[hidelinks]{hyperref}
\hypersetup{pdftitle={The unique limit subvariety of the max-plus variety and finite basis properties},pdfauthor={Xiaolei Shao and Mengya Yue},pdfsubject={Varieties of additively idempotent semirings},pdfkeywords={max-plus algebra, limit variety, finite basis, ai-semiring}}
\newtheorem{theorem}{Theorem}[section]
\newtheorem{lemma}[theorem]{Lemma}
\newtheorem{proposition}[theorem]{Proposition}
\newtheorem{corollary}[theorem]{Corollary}
\theoremstyle{definition}

\newtheorem{problem}[theorem]{Problem}
\numberwithin{equation}{section}
\newcommand{\ba}{\mathbf{a}}
\newcommand{\bb}{\mathbf{b}}
\newcommand{\bu}{\mathbf{u}}
\newcommand{\bv}{\mathbf{v}}
\newcommand{\bw}{\mathbf{w}}
\newcommand{\bq}{\mathbf{q}}
\newcommand{\bp}{\mathbf{p}}
\newcommand{\br}{\mathbf{r}}
\newcommand{\bs}{\mathbf{s}}
\newcommand{\bt}{\mathbf{t}}
\newcommand{\bh}{\mathbf{h}}
\newcommand{\bN}{\mathbf{N}}
\newcommand{\V}{\mathsf{V}}
\newcommand{\Id}{\operatorname{Id}}
\newcommand{\cA}{\mathcal A}
\newcommand{\cK}{\mathcal K}
\newcommand{\cM}{\mathcal M}
\newcommand{\cD}{\mathcal D}
\newcommand{\cV}{\mathcal V}
\newcommand{\cW}{\mathcal W}
\newcommand{\cP}{\mathcal P}
\newcommand{\cT}{\mathcal T}
\newcommand{\TR}{TR_6}
\newcommand{\Xc}{X_c^+}
\newcommand{\ellD}[1]{\boldsymbol{\ell}_{#1}}
\newcommand{\len}[1]{\ell(#1)}
\newcommand{\ct}[1]{c(#1)}
\newcommand{\just}[1]{\qquad\text{by #1}}
\setlist[enumerate,1]{label=\textup{(\roman*)},leftmargin=2.3em,itemsep=2pt,topsep=4pt}
\allowdisplaybreaks[2]
\begin{document}
\title[The max-plus variety]{The unique limit subvariety of the max-plus variety and finite basis properties}
\author{Xiaolei Shao}
\author{Mengya Yue}
\subjclass[2020]{16Y60, 03C05, 08B15, 08B05}
\keywords{additively idempotent semiring, max-plus algebra, limit variety, hereditarily finitely based variety, subvariety lattice}
\begin{abstract}
We prove that the variety generated by the max-plus algebra on the nonnegative integers has a unique limit subvariety, namely the variety generated by the six-element semiring constructed by Shao, Ren and Gao. More generally, we characterize hereditary finite basedness in a finitely defined variety of commutative additively idempotent semirings containing the max-plus variety. Its hereditarily finitely based subvarieties form a finite lattice and admit equational bases involving at most ten variables. We also prove that every proper subvariety of the max-plus variety is locally finite. A finite basis theorem for finite semirings with a multiplicative identity establishes finite basedness of all finite truncations. Finally, we determine the finite basis properties of two families of subvarieties defined by power identities and of subvarieties relatively defined by arbitrary families of identities making individual additive subterms greatest elements.
\end{abstract}
\maketitle

\section{Introduction}
An \emph{additively idempotent semiring}, or an \emph{ai-semiring}, is an algebra $(S,+,\cdot)$ whose additive reduct is a commutative idempotent semigroup, whose multiplicative reduct is a semigroup, and in which multiplication distributes over addition. We consider the signature $(2,2)$, without constants. An ai-semiring is called \emph{commutative} if its multiplication is commutative.

A variety is \emph{finitely based} if it can be defined by finitely many identities, and \emph{hereditarily finitely based} if each of its subvarieties is finitely based. A \emph{limit variety} is a nonfinitely based variety all of whose proper subvarieties are finitely based. We write $\V(S)$ for the variety generated by an algebra $S$. Background on varieties and equational logic can be found in~\cite{BS}; hereditary finite basis properties of ai-semiring varieties have been studied in~\cite{RZ,YSR}.

Let $\bN=(\mathbb N_0,\max,+)$ be the max-plus algebra on the nonnegative integers, and put $\cM=\V(\bN)$. Aceto, \'Esik and Ing\'olfsd\'ottir~\cite{AEI} proved that the max-plus algebra is nonfinitely based. Ren, Jackson, Zhao and Lei~\cite[Section~4.1]{RJZL} observed that their proof also applies in the constant-free signature and established the existence of a limit subvariety of $\cM$. Shao, Ren and Gao~\cite{SRG} subsequently identified such a subvariety: they constructed the six-element commutative ai-semiring $\TR$, described its identities and its four-element subvariety lattice, and proved that $\V(\TR)$ is a limit subvariety of $\cM$.

The main result of the present paper is the uniqueness of that limit subvariety. We prove a stronger hereditary finite basis theorem. Let $\cA$ be the variety of commutative ai-semirings defined by
\begin{align}
 xy+x&\approx xy,\tag{A}\label{A}\\
 x^2+y^2+xy&\approx x^2+y^2.\tag{B}\label{B}
\end{align}
Both identities hold in $\bN$, so $\cM\leq\cA$. Let $\cK$ be the subvariety of $\cA$ defined by
\begin{equation}
 x_1x_2+x_2x_3+x_3x_4
 \approx x_1x_2+x_2x_3+x_3x_4+x_1x_4.\tag{P}\label{P}
\end{equation}
\Needspace{12\baselineskip}
\begin{theorem}\label{main}
Let $\cV\leq\cA$. The following conditions are equivalent:
\begin{enumerate}
\item $\TR\notin\cV$;
\item $\cV$ satisfies \eqref{P};
\item $\cV$ is hereditarily finitely based.
\end{enumerate}
The subvariety lattice of $\cK$ is finite, and every subvariety of $\cK$ has a finite equational basis involving at most ten variables. Moreover, $\V(\TR)$ is the unique limit subvariety of both $\cA$ and $\cM$.
\end{theorem}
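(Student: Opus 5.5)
The plan is to prove the cycle (iii)$\Rightarrow$(i)$\Rightarrow$(ii)$\Rightarrow$(iii), and then read off the remaining claims from it.

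\emph{(iii)$\Rightarrow$(i).} If $\TR\in\cV$, then $\V(\TR)\leq\cV$. By~\cite{SRG}, $\V(\TR)$ is nonfinitely based, so $\cV$ has a nonfinitely based subvariety and is not hereditarily finitely based.

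\emph{(i)$\Rightarrow$(ii).} I will argue by contraposition. Suppose $\cV\leq\cA$ fails \eqref{P}. Then there is a semiring $S\in\cV$ and elements $a_1,\dots,a_4$ with $a_1a_4\not\le a_1a_2+a_2a_3+a_3a_4$ in the semilattice order. From such an $S$ I want to obtain $\TR$ as a homomorphic image of a subalgebra. Concretely, I take the subsemiring generated by $a_1,\dots,a_4$ and factor it by a congruence that separates $a_1a_4$ from the ``path'' element $a_1a_2+a_2a_3+a_3a_4$ and collapses everything else appropriately. The combinatorics of \eqref{A} and \eqref{B} in commutative ai-semirings should force the quotient to be $\TR$. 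This step uses the explicit description of $\TR$ and its identities from~\cite{SRG}.

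Equivalently, I would first show that the identities of $\TR$ are exactly those consequences within $\cA$ of a suitable family. In particular, I would show that an identity $u\approx v$ in $\cA$ fails in $\TR$ precisely when one side contains a ``non-path-closed'' term that \eqref{P} would close. That reduces (i)$\Rightarrow$(ii) to a syntactic check: if $\cV\models\sigma$ for some $\sigma$ failing in $\TR$, then $\sigma$ together with \eqref{A} and \eqref{B} derives \eqref{P}.

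\emph{(ii)$\Rightarrow$(iii).} This is the main obstacle. My approach is to find a normal form for terms modulo $\cK$. Each term is a sum of monomials, and \eqref{A} says each monomial dominates its divisors. \eqref{B} says squares absorb cross products, and \eqref{P} says that in the ``graph'' of degree-two products, paths of length three close up. The key lemma I aim for is that every identity valid in some subvariety of $\cK$ is equivalent modulo $\cK$ to identities in at most ten variables, and that there are only finitely many such identities up to equivalence. I would first try to bound the number of variables by showing that a failing identity can be witnessed by a substitution identifying variables so that at most ten distinct ``types'' remain. This should work because of the closure properties: monomials are determined by small configurations of variables with exponents thresholded by \eqref{A} and \eqref{B}.

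From this I would show that $\cK$ is generated by finitely many finite semirings, using the local finiteness of proper subvarieties proved later. Then every subvariety of $\cK$ is determined by its ten-variable identities, and the free algebra of $\cK$ on ten generators being finite gives a finite subvariety lattice. It also gives finite bases with at most ten variables: a basis consists of \eqref{A}, \eqref{B}, \eqref{P} and representatives of the finitely many ten-variable identities. Hence every $\cV\leq\cK$ is finitely based, and the same holds for each of its subvarieties.

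\emph{Limit subvariety claims.} These follow from the equivalences. A limit subvariety $\cV$ of $\cA$ is nonfinitely based, so by (iii)$\Rightarrow$(i) it contains $\TR$, hence $\V(\TR)\leq\cV$. Because $\V(\TR)$ is itself nonfinitely based and $\cV$ is a limit variety, this forces $\cV=\V(\TR)$. Since $\TR\in\cM$~\cite{SRG}, the same conclusion holds for $\cM$.
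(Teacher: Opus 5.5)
Your overall architecture is the paper's. The step (iii)$\Rightarrow$(i) and the uniqueness deduction are as there, and your endgame for $\cK$ (a finite relatively free algebra on ten generators plus a ten-variable reduction) is Theorem~\ref{K-finite-lattice}. But both substantive implications are only announced, and your first route to (i)$\Rightarrow$(ii) is false.

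Take $S=\bN\in\cA$, which fails \eqref{P} at $x_1=x_4=1$, $x_2=x_3=0$. Every subsemiring of $\bN$ is additively a chain, and so is every quotient of one. $\TR$ is not a chain: its squares are greatest, so $a\leq b$ would force $ab\geq a^2$ to be greatest, while $5\cdot6=3$ is not greatest. Hence $5$ and $6$ are incomparable. So $\TR$ is in general not a homomorphic image of a subsemiring of a witness $S$, and replacing $S$ by a relatively free algebra only restates what must be proved.

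The implication has to be proved syntactically, as in your second route, and you do not supply that content. The input is not a path-closure condition but \cite[Proposition~3.4]{SRG}: if $\bu\approx\bu+\bq$ fails in $\TR$, then every $\bu_i$ is linear of length at most two, $L_2(\bu)$ is bipartite for some partition $\ct{\bu}=A\cup B$, and $\bq$ is neither an additive subterm of $\bu$ nor a variable of $\ct{\bu}$. One then substitutes into $\bp=x_1x_2+x_2x_3+x_3x_4$ so that \eqref{A} absorbs $\varphi(\bu)$ into $\bp$, in three cases. If $t\in\ct{\bq}\setminus\ct{\bu}$, use $t\mapsto x_1x_4$, $A\mapsto x_2$, $B\mapsto x_3$. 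If two occurrences in $\bq$ lie on one side, $A\mapsto x_1$, $B\mapsto x_2$ gives $\bp\approx\bp+x_1^2$; symmetrically $\bp\approx\bp+x_4^2$, and \eqref{B} then yields $x_1x_4$. If $\bq=st$ with $s\in A$, $t\in B$, use $s\mapsto x_1$, $t\mapsto x_4$, $A\setminus\{s\}\mapsto x_3$, $B\setminus\{t\}\mapsto x_2$.

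For (ii)$\Rightarrow$(iii), you cannot borrow local finiteness from the theorem on proper subvarieties of $\cM$. That requires $\cK\leq\cM$, which the paper proves only afterwards, using the normal form you have not established. The paper gets local finiteness directly from substitutions such as $(x^2,x,x,x^2)$ and $(xy,z,y,x)$ in \eqref{P}. With \eqref{A} and \eqref{B} these give $x^4\approx x^3$ and $xyz\approx x^3+y^3+z^3$.

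More importantly, local finiteness gives neither finitely many subvarieties nor finite bases; $\V(\TR)$ itself is locally finite and nonfinitely based. Everything rests on the uniform ten-variable reduction, and ``identifying variables into types'' is not yet an argument. The missing ideas are two.
\begin{enumerate}
\item By \eqref{P}, the set of length-two products absorbed by a quadratic term is closed under the rule that $ab,bc,cd$ force $ad$. Modulo \eqref{B}, it therefore consists of the squares on a set $C$ together with complete bipartite blocks $U_i\times V_i$, which gives the normal form \eqref{normal}.
\item Take $\bu\approx\bu+\bq$ with $\bq\in\{x,x^2,x^3,xy\}$ and $D=\ct{\bq}$; at most two blocks meet $D$. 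Collapse $L\setminus D$, $C\setminus D$, and $U_i\setminus D$, $V_i\setminus D$ for the blocks meeting $D$ to single variables, and all other blocks to one product $z_1z_2$. The original identity is recovered by substituting sums back, using $\ellD{Y}^{\,r}\approx\sum_{x\in Y}x^r$ and absorbing the one restored block into $\bu$.
\end{enumerate}

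Finally, the limit-variety step uses (i)$\Rightarrow$(iii), not (iii)$\Rightarrow$(i). Existence also needs the cited fact that $\V(\TR)$ is a limit variety.
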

The identification and nonfinite basedness of $\TR$ are results of~\cite{SRG}. Here the exclusion argument is carried out throughout $\cA$, and a representation of terms in $\cK$ yields a uniform bound on the number of variables needed to define its subvarieties. These results prove uniqueness. We also establish $\cK\leq\cM$, identifying $\cK$ as the greatest hereditarily finitely based subvariety of $\cM$.

We also investigate ordinary finite basedness and the structure of $\cM$. Every proper subvariety is locally finite and satisfies a power-stabilization identity. Every finite commutative ai-semiring satisfying \eqref{A} and possessing a multiplicative identity is finitely based. In particular, all finite truncations of $\bN$ are finitely based. We determine the finite basis properties of the varieties defined within $\cM$ by $x^k\approx x^{k+1}$ and by $x^k+z\approx x^k$. We further give a necessary and sufficient condition for finite basedness when a subvariety is relatively defined by an arbitrary family of identities $\bw+z\approx\bw$, where $\bw\in\Xc$ and $z\notin\ct{\bw}$.

Section~\ref{prelim} records the notation and the results from~\cite{SRG} needed below. Sections~\ref{exclusion}--\ref{unique} prove Theorem~\ref{main}. Sections~\ref{structure} and~\ref{finite} treat finite truncations, proper subvarieties, and finite members with identity. Sections~\ref{interval}--\ref{powers} establish the finite basis results for the stated families. Section~\ref{atoms} describes the two atoms and the inverse images of a restriction map. Section~\ref{last} concludes with a descending chain and the remaining finite basis problem.

\section{Preliminaries}\label{prelim}
Let $X$ be a countably infinite set of variables, and let $\Xc$ be the free commutative semigroup on $X$. We use bold lowercase letters for terms, and ordinary lowercase letters for variables and elements. A commutative ai-semiring term is written as
\[
 \bu=\bu_1+\cdots+\bu_m,\qquad \bu_i\in\Xc.
\]
The products $\bu_i$ are its \emph{additive subterms}. Products are identified under commutativity, and repeated additive subterms are removed by additive idempotence. Thus the free commutative ai-semiring on $X$ is $P_f(\Xc)$, with union and setwise multiplication. A substitution is a homomorphism of this free ai-semiring. Each variable is mapped to a nonempty term.

For $\bw\in\Xc$, let $\ct{\bw}$ and $\len{\bw}$ denote its content and length, respectively. An additive subterm is \emph{linear} if no variable occurs more than once. Put
\[
 \ct{\bu}=\bigcup_{i=1}^m\ct{\bu_i},\qquad
 L_k(\bu)=\{\bu_i:\len{\bu_i}=k\}.
\]
For a finite set $E\subseteq\Xc$, write $\ct E=\bigcup_{\bw\in E}\ct{\bw}$. Whenever a sum indexed by an empty set occurs as part of a displayed term, that part is omitted. For a nonempty finite set $D\subseteq X$, put
\[
 \ellD D=\sum_{x\in D}x.
\]
For an ai-semiring $S$, its additive order is given by $a\leq b$ if $a+b=b$. For terms, $\bu\preceq\bv$ abbreviates the identity $\bu+\bv\approx\bv$. We write $[\Sigma]$ for the ai-semiring variety defined by an identity set $\Sigma$ and $\Id(S)$ for the identities of $S$. Deductions between commutative ai-semiring terms are relative to the commutative ai-semiring axioms; these axioms are included when an absolute basis is asserted.

An identity $\bu\approx\bv$, where $\bu=\sum_i\bu_i$ and $\bv=\sum_j\bv_j$, is equivalent to the identities
\begin{equation}\label{one-sided}
 \bu\approx\bu+\bv_j\quad\text{for all }j,
 \qquad \bv\approx\bv+\bu_i\quad\text{for all }i.
\end{equation}
This reduction and the substitution form of equational derivations are recalled in~\cite[Section~2]{SRG}. In the presence of \eqref{A}, if $\bq$ is a nonempty factor of $\bw\in\Xc$, then
\begin{equation}\label{factor}
 \bw\approx\bw+\bq.
\end{equation}
Indeed, write $\bw=\bq\bp$ and apply \eqref{A}; if $\bq=\bw$, use additive idempotence.

We use the semiring $\TR$ with the notation and element labels of~\cite[Table~3]{SRG}. The following are the precise results from that paper used in the proof of uniqueness. The algebra $\TR$ belongs to $\cM$, and $\V(\TR)$ is a limit variety~\cite[Theorem~4.6 and Section~7]{SRG}. Further, suppose
\begin{equation}\label{failed-id}
 \bu\approx\bu+\bq,\qquad \bu_i,\bq\in\Xc,
\end{equation}
fails in $\TR$. By~\cite[Proposition~3.4]{SRG}, $\bq$ is not an additive subterm of $\bu$, every $\bu_i$ is linear of length at most two, and there are disjoint sets $A,B$ such that
\begin{equation}\label{partition}
 \ct{\bu}=A\mathbin{\dot\cup}B,
 \qquad L_2(\bu)\subseteq\{ab:a\in A,\ b\in B\}.
\end{equation}
Here variables occurring only in $L_1(\bu)$ may be assigned to either part. Moreover, $\bq$ cannot be a variable belonging to $\ct{\bu}$. These statements also cover the case in which $L_2(\bu)$ is empty.

In semiring identities we use $+$ for the semiring addition and juxtaposition for multiplication. When evaluating in $\bN$, an additive subterm is evaluated by numerical addition, and a term by the maximum of the values of its additive subterms.

\section{The exclusion identity}\label{exclusion}
The substitutions below are based on those used for the exclusion criterion inside $\V(SR_6,\TR)$ in~\cite[Proposition~6.6]{SRG}. The identities \eqref{A} and \eqref{B} allow the criterion to be proved for every subvariety of $\cA$.
\begin{theorem}\label{exclude}
For every $\cV\leq\cA$, the following are equivalent:
\[
 \TR\notin\cV,\qquad \cV\models\eqref{P},\qquad \cV\leq\cK.
\]
\end{theorem}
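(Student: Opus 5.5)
The plan is to prove the cycle of implications. The equivalence $\cV\models\eqref{P}\iff\cV\leq\cK$ holds by definition, since $\cK$ is the subvariety of $\cA$ defined by \eqref{P}. For $\cV\models\eqref{P}\Rightarrow\TR\notin\cV$ it suffices to show that $\TR\not\models\eqref{P}$. I would check this with an explicit assignment using the multiplication table~\cite[Table~3]{SRG}. The bipartite shape required by \eqref{partition} is available for $\bu=x_1x_2+x_2x_3+x_3x_4$: take $A=\{x_1,x_3\}$ and $B=\{x_2,x_4\}$. The target $\bq=x_1x_4$ has length two and joins $A$ to $B$ without being an additive subterm, so it is exactly the kind of identity $\TR$ can separate. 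The task is then to pick elements $a_1,a_3$ and $b_2,b_4$ of the appropriate "parts" of $\TR$ so that $a_1b_4$ lies outside the ideal spanned by the three path products. This verification is finite.

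The substantive direction is $\TR\notin\cV\Rightarrow\cV\models\eqref{P}$. Since $\TR\notin\cV$, some identity of $\cV$ fails in $\TR$. Using \eqref{one-sided}, and splitting a sum $\bq$ into its summands, I may assume this identity has the form \eqref{failed-id}, $\bu\approx\bu+\bq$ with $\bq\in\Xc$. By~\cite[Proposition~3.4]{SRG}, every $\bu_i$ is linear of length at most two, $L_2(\bu)$ is bipartite across a partition $\ct{\bu}=A\mathbin{\dot\cup}B$, $\bq$ is not an additive subterm of $\bu$, and $\bq$ is not a variable of $\ct{\bu}$. I would then apply a substitution $\sigma$ sending each $a\in A$ to a term built from $x_1,x_3$ and each $b\in B$ to a term built from $x_2,x_4$. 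The goal is for every summand of $\sigma(\bu)$ to be $\preceq x_1x_2+x_2x_3+x_3x_4$ modulo \eqref{A}, \eqref{B} and \eqref{factor}, while $\sigma(\bq)$ has $x_1x_4$ as a factor, so that \eqref{factor} gives $\sigma(\bq)\succeq x_1x_4$. Adding the path term to both sides then yields \eqref{P}.

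The case analysis is on $\bq$.
\begin{enumerate}
\item $\bq$ contains a variable $z\notin\ct{\bu}$. Substitute $z\mapsto x_1x_4$ and send the remaining variables to suitable path products, for example every variable to $x_2x_3$. Then \eqref{factor} gives $\bq$'s image $\succeq x_1x_4$, and the path dominates the image of $\bu$.
\item $\bq\in\Xc$ is built from $\ct{\bu}$ and meets both $A$ and $B$ in some $a,b$ with $ab\notin L_2(\bu)$. Send $a\mapsto x_1$, $b\mapsto x_4$, every other variable of $A$ to $x_3$, and every other variable of $B$ to $x_2$. Every edge of $L_2(\bu)$ then maps to one of $x_1x_2$, $x_2x_3$, $x_3x_4$, and linear length-one summands map to variables dominated via \eqref{A}.
\item $\bq$ lies entirely inside one part, or is a square, or every cross pair in $\bq$ is already an edge. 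Here I would use \eqref{B} to replace products $xy$ by $x^2+y^2$, and double variables, for instance $a\mapsto x_1x_3$. This should force $x_1x_4$ to divide $\sigma(\bq)$ while keeping $\sigma(\bu)$ dominated.
\end{enumerate}

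I expect the third case to be the main obstacle. Because $\bq$ is not a subterm of $\bu$, some product occurring in $\bq$ must fail to appear in $\bu$. The difficulty is choosing the images so that the extra product created in $\sigma(\bq)$ contains $x_1x_4$ while every image of an edge of $\bu$ avoids $x_1x_4$. I would first try to exploit non-linearity or length at least three of $\bq$, mimicking the substitutions from~\cite[Proposition~6.6]{SRG}, and use \eqref{B} to absorb any square terms produced by the substitution back below the path sum.
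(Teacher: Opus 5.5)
Your framework matches the paper's: reduce to an identity $\bu\approx\bu+\bq$ that fails in $\TR$, use the bipartition $A\mathbin{\dot\cup}B$, and compare with $\bp=x_1x_2+x_2x_3+x_3x_4$. Your case (ii) is correct; it is the paper's Case~3, extended to longer $\bq$. The genuine gap is case (iii), and the strategy you sketch for it cannot succeed.

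Take $\bu=xy+yz$ and $\bq=xz$, with $A=\{x,z\}$ and $B=\{y\}$. This identity does fail in $\TR$: substituting $z\mapsto x$ gives $xy\succeq x^2$, but squares are greatest in $\TR$ and $5\cdot6=3$ is not. Suppose some $\sigma$ gave $\sigma(\bu)\preceq\bp$ and $x_1x_4\preceq\sigma(\bq)$ in $\cA$, hence in $\bN$.
\begin{enumerate}
\item In $\bN$, a monomial lies below $\bp$ only if it divides $x_1x_2$, $x_2x_3$ or $x_3x_4$.
\item So if a summand of $\sigma(x)$ or $\sigma(z)$ contains $x_1$, every summand of $\sigma(y)$ must be $x_2$, i.e.\ $\sigma(y)=x_2$. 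Likewise, a summand containing $x_4$ forces $\sigma(y)=x_3$.
\item But $x_1x_4\preceq\sigma(xz)$ needs both letters to occur: evaluate a missing one at $1$ and all other variables at $0$.
\end{enumerate}
This is a contradiction. So no single substitution instance followed by \eqref{factor} yields \eqref{P} here. Doubling a variable that lies in an edge, as in $a\mapsto x_1x_3$, only destroys the domination of that edge.

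The missing idea is to aim for squares and then invoke \eqref{B}. Suppose two occurrences of variables of $\bq$ lie in the same part, say $A$. Map all of $A$ to $x_1$ and all of $B$ to $x_2$. Every summand of $\varphi(\bu)$ is then $x_1$, $x_2$ or $x_1x_2$, while $x_1^2$ divides $\varphi(\bq)$, so $\bp\approx\bp+x_1^2$. The symmetry $x_1\leftrightarrow x_4$, $x_2\leftrightarrow x_3$ of $\bp$ gives $\bp\approx\bp+x_4^2$. Then \eqref{B} yields $\bp\approx\bp+x_1^2+x_4^2+x_1x_4\approx\bp+x_1x_4$. This is exactly where the hypothesis $\cV\leq\cA$ enters.

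Your whole case (iii) falls under this argument. A square has a repeated variable. If every cross pair of $\bq$ is an edge, then $\len{\bq}\geq3$, since otherwise $\bq$ would be an additive subterm of $\bu$; by pigeonhole, two occurrences then share a part.

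Two smaller points remain.
\begin{enumerate}
\item In case (i), your sample substitution sending every variable to $x_2x_3$ fails, because an edge $ab$ becomes $x_2^2x_3^2\not\preceq\bp$. Use $A\mapsto x_2$, $B\mapsto x_3$, $t\mapsto x_1x_4$ instead, with all other variables sent to $x_2$.
\item The failure of \eqref{P} in $\TR$ still has to be exhibited. The assignment $(x_1,x_2,x_3,x_4)\mapsto(2,6,5,4)$ gives path value $3$ and $x_1x_4=1$, and $3+1=1\neq3$.
\end{enumerate}
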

\begin{proof}
The last equivalence is the definition of $\cK$. In $\TR$, the substitution
$(x_1,x_2,x_3,x_4)\mapsto(2,6,5,4)$ gives
\[
 x_1x_2+x_2x_3+x_3x_4=3+3+3=3,\qquad x_1x_4=2\cdot4=1.
\]
Since $3+1=1\neq3$, identity \eqref{P} fails in $\TR$.

Conversely, assume $\TR\notin\cV$. By \eqref{one-sided}, there is an identity \eqref{failed-id} that holds in $\cV$ and fails in $\TR$. Choose $A,B$ as in \eqref{partition}, and put
\[
 \bp=x_1x_2+x_2x_3+x_3x_4,
\]
where $x_1,x_2,x_3,x_4$ are distinct variables outside $\ct{\bu}\cup\ct{\bq}$. If a substitution $\varphi$ satisfies
$\bp\approx\bp+\varphi(\bu)$ by \eqref{A}, then in $\cV$ we have
\begin{equation}\label{absorb-image}
 \begin{aligned}
 \bp&\approx\bp+\varphi(\bu)\\
 &\approx\bp+\varphi(\bu)+\varphi(\bq)\just{\eqref{failed-id}}\\
 &\approx\bp+\varphi(\bq).
 \end{aligned}
\end{equation}
We distinguish three cases.

\medskip\noindent\textbf{Case 1.} $\ct{\bq}\nsubseteq\ct{\bu}$.
Choose $t\in\ct{\bq}\setminus\ct{\bu}$ and define
\[
 \varphi(a)=
 \begin{cases}
 x_2,&a\in A,\\
 x_3,&a\in B,\\
 x_1x_4,&a=t,\\
 x_2,&a\in X\setminus(A\cup B\cup\{t\}).
 \end{cases}
\]
Each additive subterm of $\varphi(\bu)$ is $x_2$, $x_3$ or $x_2x_3$. Hence \eqref{A} gives $\bp\approx\bp+\varphi(\bu)$. Also, $x_1x_4$ is a factor of $\varphi(\bq)$. By \eqref{absorb-image} and \eqref{factor},
\[
 \bp\approx\bp+\varphi(\bq)
 \approx\bp+\varphi(\bq)+x_1x_4
 \approx\bp+x_1x_4.
\]
Thus \eqref{P} holds.

We may now assume $\ct{\bq}\subseteq\ct{\bu}$. The recalled characterization of the identities of $\TR$ gives $\len{\bq}\geq2$.

\medskip\noindent\textbf{Case 2.} At least two occurrences of variables in $\bq$ belong to the same part of \eqref{partition}.
Interchanging $A$ and $B$ if necessary, assume this part is $A$. Define $\varphi(a)=x_1$ for $a\in A$ and $\varphi(a)=x_2$ for $a\in B$; let $\varphi$ fix the other variables. Each additive subterm of $\varphi(\bu)$ is $x_1$, $x_2$ or $x_1x_2$, so $\bp\approx\bp+\varphi(\bu)$. Since $x_1^2$ is a factor of $\varphi(\bq)$, \eqref{absorb-image} and \eqref{factor} give
\begin{equation}\label{square-left}
 \bp\approx\bp+x_1^2.
\end{equation}
The substitution interchanging $x_1,x_4$ and $x_2,x_3$ fixes $\bp$ and yields
\begin{equation}\label{square-right}
 \bp\approx\bp+x_4^2.
\end{equation}
Consequently,
\[
 \begin{aligned}
 \bp&\approx\bp+x_1^2+x_4^2\just{\eqref{square-left}, \eqref{square-right}}\\
 &\approx\bp+x_1^2+x_4^2+x_1x_4\just{\eqref{B}}\\
 &\approx\bp+x_1x_4\just{\eqref{square-left}, \eqref{square-right}}.
 \end{aligned}
\]
This proves \eqref{P}.

\medskip\noindent\textbf{Case 3.} $\bq=st$, where $s\in A$ and $t\in B$.
Since $st$ is not an additive subterm of $\bu$, define
\[
 \varphi(a)=
 \begin{cases}
 x_1,&a=s,\\
 x_4,&a=t,\\
 x_3,&a\in A\setminus\{s\},\\
 x_2,&a\in B\setminus\{t\},\\
 a,&a\notin A\cup B.
 \end{cases}
\]
A member of $L_2(\bu)$ containing $s$ has image $x_1x_2$; one containing $t$ has image $x_3x_4$; one containing neither has image $x_2x_3$. None contains both $s$ and $t$. The image of an additive subterm of length one is one of $x_1,x_2,x_3,x_4$, each of which is a factor of a member of $L_2(\bp)$. Thus \eqref{A} gives $\bp\approx\bp+\varphi(\bu)$, while $\varphi(\bq)=x_1x_4$. Equation~\eqref{absorb-image} proves \eqref{P}.

Every additive subterm of length at least three belongs to Case~2, as does a square. The three cases therefore exhaust all possibilities.
\end{proof}

\section{The subvarieties of \texorpdfstring{$\cK$}{K}}\label{hfb}
Let $\Sigma_{\cK}$ consist of the commutative ai-semiring axioms and \eqref{A}, \eqref{B}, \eqref{P}. Throughout this section the displayed identities are derived from $\Sigma_{\cK}$, unless additional assumptions are specified.

\begin{lemma}\label{basic}
The variety $\cK$ satisfies
\begin{align}
 x^4&\approx x^3,\label{cube-stable}\\
 xyz&\approx x^3+y^3+z^3,\label{triple-sum}\\
 x^3y&\approx x^3+y^3,\label{cube-product}\\
 x^3+y^2&\approx x^3+y^3.\label{cube-square}
\end{align}
If $\bw\in\Xc$ and $\len{\bw}\geq3$, then
\begin{equation}\label{long-sum}
 \bw\approx\sum_{x\in\ct{\bw}}x^3.
\end{equation}
\end{lemma}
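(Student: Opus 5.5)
The plan is to derive everything from \eqref{A}, \eqref{B} and \eqref{P} by substituting suitable products into \eqref{P}. Each identity is split via \eqref{one-sided} into two inequalities $\bu\preceq\bv$. The basic mechanism is this: if $\varphi(x_1x_2),\varphi(x_2x_3),\varphi(x_3x_4)$ are each $\preceq$ some term $\bt$, then \eqref{P} gives $\varphi(x_1x_4)\preceq\bt$. The upper bounds for the three terms will come from \eqref{factor}, from \eqref{B}, or from inequalities already proved.

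\emph{Step 1 (domination by long products).} Taking $(x_1,x_2,x_3,x_4)\mapsto(a,b,c,ab)$ makes the three $\bp$-terms $ab,bc,abc$, all factors of $abc$. Hence $a^2b\preceq abc$ for arbitrary $a,b,c$. Applying this to $x^2y=x\cdot x\cdot y$ gives $x^3\preceq x^2y\preceq xyz$. Therefore $x^3+y^3+z^3\preceq xyz$, and by the same argument $x^3+y^3\preceq x^3y$. For \eqref{cube-stable}, the substitution $(x^2,x,x,x^2)$ has $\bp$-terms $x^3,x^2,x^3$, which gives $x^4\preceq x^3$; the reverse inequality is \eqref{factor}.

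\emph{Step 2 (domination by sums of cubes).} Substituting $x\mapsto x^2$, $y\mapsto y^2$ in \eqref{B} and using \eqref{cube-stable} gives $x^2y^2\preceq x^3+y^3$. By \eqref{factor}, every factor of $x^2y^2$, such as $xy$, $x^2y$ and $xy^2$, is then $\preceq x^3+y^3$. The substitution $(x^2,x,y,xy)$ has $\bp$-terms $x^3,xy,xy^2$, which yields $x^3y\preceq x^3+y^3$ and hence \eqref{cube-product}. Similarly, $(x,x^2,y,yz)$ has $\bp$-terms $x^3,x^2y,y^2z$, all bounded by $x^3+y^3+z^3$. This gives $xyz\preceq x^3+y^3+z^3$ and completes \eqref{triple-sum}. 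For \eqref{long-sum}, I will induct on $\len{\bw}$, with base case \eqref{triple-sum}. Writing $\bw=abc\,\bw'$ and replacing $abc$ by $a^3+b^3+c^3$ distributes $\bw$ into a sum of products each containing a cube. Repeated use of \eqref{cube-product} and \eqref{cube-stable} then collapses each such product to the sum of the cubes of its variables.

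\emph{Step 3 (\eqref{cube-square}), the expected obstacle.} The inequality $x^3+y^2\preceq x^3+y^3$ is immediate from \eqref{factor}. The difficulty is $y^3\preceq x^3+y^2$. Every natural substitution into \eqref{P} with $\varphi(x_1x_4)=y^3$ produces a $\bp$-term such as $xy^2$ or $x^2y$. It is not clear that these are bounded by $x^3+y^2$, and the attempts I checked lead to a circular pair $x^2y\leftrightarrow xy^2$. I would first use \eqref{B} in the form $xy\preceq x^2+y^2\preceq x^3+y^2$, together with \eqref{cube-product} in the form $x^3y\approx x^3+y^3$, to see whether $xy^2$ can be reached by a second application of \eqref{P}. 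I would also try substitutions in which $x_2$ or $x_3$ is a power of $x$, so that the middle $\bp$-term becomes a pure power of $x$ absorbed by $x^3$. If all of these fail, I would re-examine \eqref{cube-square} in a small model of $\cK$ to make sure it is actually intended as stated.
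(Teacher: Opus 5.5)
Your Steps~1 and~2 are correct. The substitutions $(a,b,c,ab)$, $(x^2,x,x,x^2)$, $(x^2,x,y,xy)$ and $(x,x^2,y,yz)$ into \eqref{P} give exactly the inequalities you state. Together with \eqref{B} at $(x^2,y^2)$, they establish \eqref{cube-stable}, \eqref{triple-sum}, \eqref{cube-product} and \eqref{long-sum}. The paper takes a slightly different route: it gets $xyz\preceq x^3+y^3+z^3$ from \eqref{B} at $(x^2,y^2)$ and $(xy,z)$, and then derives \eqref{cube-product} by substituting $(x^2,x,y)$ into \eqref{triple-sum}. Either way works.

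There is, however, a genuine gap. Step~3 never proves $y^3\preceq x^3+y^2$, so \eqref{cube-square} is left open, although it does hold in $\cK$ as stated. Searching among further substitutions into \eqref{P} is the wrong direction. The missing step is a single instance of \eqref{B} with a cube in place of $x$:
\begin{enumerate}
\item Substituting $(x,y)\mapsto(x^3,y)$ in \eqref{B} gives $x^3y\preceq x^6+y^2$.
\item Since $x^6\approx x^3$ by \eqref{cube-stable}, this gives $x^3y\preceq x^3+y^2$.
\item Your Step~1 already gives $y^3\preceq x^3y$, so transitivity yields $y^3\preceq x^3+y^2$.
\item Then $x^3+y^2\approx x^3+y^2+y^3\approx x^3+y^3$, using $y^2\preceq y^3$ from \eqref{A}.
\end{enumerate}
You came close when you proposed combining \eqref{B} with \eqref{cube-product}. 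But using \eqref{B} only in the form $xy\preceq x^2+y^2$ produces the product $xy$. What you need is $x^3y$, because that is the product \eqref{cube-product} converts into $x^3+y^3$.
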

\begin{proof}
Substitute $(x^2,x,x,x^2)$ for $(x_1,x_2,x_3,x_4)$ in \eqref{P}. Then
\[
 x^3\approx x^3+x^2
 \approx x^3+x^2+x^4
 \approx x^4,
\]
where the first and last steps use \eqref{A}. This proves \eqref{cube-stable}, and hence $x^k\approx x^3$ for every $k\geq3$.

Substituting $(xy,z,y,x)$ in \eqref{P} gives
\[
 xyz\approx xyz+yz+xy
 \approx xyz+yz+xy+x^2y
 \approx xyz+x^2y.
\]
Here the first and last steps again use \eqref{A}. Replacing $(x,y,z)$ by $(x,x,y)$ in the resulting identity yields $x^2y\approx x^2y+x^3$. Therefore
\[
 xyz\approx xyz+x^2y
 \approx xyz+x^2y+x^3
 \approx xyz+x^3.
\]
Permuting the variables, we obtain
\begin{equation}\label{triple-absorb}
 xyz\approx xyz+x^3+y^3+z^3.
\end{equation}
For the reverse direction,
\[
\begin{aligned}
 x^3+y^3+z^3
 &\approx x^4+y^4+z^3+z^2\just{\eqref{cube-stable}, \eqref{A}}\\
 &\approx x^4+y^4+z^3+z^2+x^2y^2\just{\eqref{B}}\\
 &\approx x^4+y^4+z^3+z^2+x^2y^2+xyz\just{\eqref{B}}\\
 &\approx x^3+y^3+z^3+xyz\just{\eqref{B}, \eqref{cube-stable}, \eqref{A}}\\
 &\approx xyz\just{\eqref{triple-absorb}}.
\end{aligned}
\]
The two steps adding a product use the substitutions $(x,y)\mapsto(x^2,y^2)$ and $(x,y)\mapsto(xy,z)$ in \eqref{B}. This proves \eqref{triple-sum}.

Now substitute $(x^2,x,y)$ in \eqref{triple-sum}. By \eqref{cube-stable},
\[
 x^3y\approx x^6+x^3+y^3\approx x^3+y^3,
\]
which is \eqref{cube-product}. Also,
\[
\begin{aligned}
 x^3+y^2&\approx x^6+y^2
 \approx x^6+y^2+x^3y\just{\eqref{B}}\\
 &\approx x^3+y^2+x^3+y^3\just{\eqref{cube-stable}, \eqref{cube-product}}\\
 &\approx x^3+y^3\just{\eqref{A}}.
\end{aligned}
\]
Finally, prove \eqref{long-sum} by induction on length. The case of length three is \eqref{triple-sum}. If $\bw=\bv t$, where $t\in X$ and $\len{\bv}\geq3$, then
\[
 \bw\approx\left(\sum_{x\in\ct{\bv}}x^3\right)t
 \approx\sum_{x\in\ct{\bv}}(x^3+t^3)
 \approx\sum_{x\in\ct{\bw}}x^3.
\]
\end{proof}

\begin{lemma}\label{sum-powers}
For a nonempty finite set $D\subseteq X$ and $r\in\{2,3\}$,
\begin{equation}\label{powers-of-sums}
 \ellD D^{\,r}\approx\sum_{x\in D}x^r.
\end{equation}
\end{lemma}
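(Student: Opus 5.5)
We argue inside $\cK$, using Lemma~\ref{basic}. Expanding $\ellD D^{\,r}$ by distributivity gives the sum of all products $x_1\cdots x_r$ with $x_i\in D$. This sum contains every $x^r$ with $x\in D$. It follows that
\[
 \ellD D^{\,r}\approx\ellD D^{\,r}+\sum_{x\in D}x^r
\]
holds trivially by idempotence. The content of the statement is therefore the reverse absorption: every additive subterm of $\ellD D^{\,r}$ must be absorbed by $\sum_{x\in D}x^r$. By \eqref{one-sided}, it suffices to show $\sum_{x\in D}x^r\approx\sum_{x\in D}x^r+\bw$ for each additive subterm $\bw$ of the expansion.

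\textbf{Case $r=2$.} The additive subterms are the squares $x^2$ and the products $xy$ with $x\neq y$ in $D$. The squares already occur in $\sum_{x\in D}x^2$. For a product $xy$, identity \eqref{B} gives $x^2+y^2\approx x^2+y^2+xy$. Adding the remaining squares to both sides then absorbs $xy$. Doing this for each pair, we obtain $\sum_{x\in D}x^2\approx\sum_{x\in D}x^2+\ellD D^{\,2}$, and this settles $r=2$.

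\textbf{Case $r=3$.} An additive subterm $\bw$ of the expansion has length three, so by \eqref{long-sum},
\[
 \bw\approx\sum_{x\in\ct{\bw}}x^3.
\]
Here $\ct{\bw}\subseteq D$, so $\bw$ is a subsum of $\sum_{x\in D}x^3$ and is absorbed by idempotence. Summing over all subterms, we get $\ellD D^{\,3}\approx\sum_{x\in\ct{\ellD D^{\,3}}}x^3=\sum_{x\in D}x^3$, since every $x\in D$ appears in some $\bw$ (for instance $x^3$).

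The only step needing care is that \eqref{long-sum} applies to additive subterms with repeated variables, such as $x^2y$ and $x^3$. This is so because it is stated for all $\bw\in\Xc$ of length at least three, with $\ct{\bw}$ taken as a set. I therefore expect no real obstacle beyond applying \eqref{B} substitution-wise in the case $r=2$.
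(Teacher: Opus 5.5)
Your proof is correct and uses the same ingredients as the paper: \eqref{B} absorbs the mixed products in the square, and \eqref{triple-sum} (through \eqref{long-sum}) collapses the length-three products in the cube. The only difference is organizational: the paper proves the two-summand identities $(x+y)^r\approx x^r+y^r$ and then inducts on $|D|$, whereas you expand $\ellD D^{\,r}$ completely and absorb each additive subterm directly, which avoids the induction.
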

\begin{proof}
By \eqref{B} and \eqref{triple-sum}, respectively,
\[
 (x+y)^2\approx x^2+xy+y^2\approx x^2+y^2,
\]
\[
 (x+y)^3\approx x^3+x^2y+xy^2+y^3\approx x^3+y^3.
\]
Induction on $|D|$ proves the assertion.
\end{proof}

\begin{corollary}\label{K-lf}
The variety $\cK$ is locally finite.
\end{corollary}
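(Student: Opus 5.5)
Local finiteness means every finitely generated member of $\cK$ is finite. Equivalently, for each $n$ the $n$-generated free algebra $F_{\cK}(x_1,\dots,x_n)$ is finite. Since $\cK$ is a variety, it suffices to show that, modulo $\Sigma_{\cK}$, there are only finitely many terms in the variables $x_1,\dots,x_n$. My plan is to reduce every such term to a normal form drawn from a finite set. A term is a finite set of additive subterms, so it is enough to show that every additive subterm $\bw\in\Xc$ with $\ct{\bw}\subseteq\{x_1,\dots,x_n\}$ is equivalent modulo $\Sigma_{\cK}$ to a sum of products drawn from a fixed finite set $W_n$.

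The key step uses Lemma~\ref{basic}. If $\len{\bw}\geq3$, then \eqref{long-sum} gives $\bw\approx\sum_{x\in\ct{\bw}}x^3$, which is a sum of elements of $\{x_1^3,\dots,x_n^3\}$. If $\len{\bw}\leq2$, then $\bw$ is one of $x_i$, $x_i^2$ or $x_ix_j$. So we may take
\[
W_n=\{x_i,\ x_i^2,\ x_i^3,\ x_ix_j : 1\leq i,j\leq n\},
\]
which is finite. Replacing each additive subterm of an arbitrary term $\bu$ by its equivalent sum, and then removing repetitions by additive idempotence, shows that $\bu$ is equivalent to a nonempty subset of $W_n$, viewed as a sum. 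There are at most $2^{|W_n|}-1$ such subsets. Hence $|F_{\cK}(x_1,\dots,x_n)|\leq 2^{|W_n|}-1$, and $\cK$ is locally finite.

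No step is a genuine obstacle, since the substantive reduction is already contained in \eqref{long-sum}. The only point requiring care is that replacing an additive subterm inside a sum by an equivalent term is a legitimate deduction. This holds because $\approx$ is a congruence on the free ai-semiring $P_f(\Xc)$: from $\bw\approx\bw'$ we obtain $\bw+\bv\approx\bw'+\bv$ for any term $\bv$. Lemma~\ref{sum-powers} is not needed for this corollary.
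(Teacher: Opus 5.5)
Your proof is correct and is essentially the paper's argument. The paper likewise uses \eqref{long-sum} to write every term over a finite variable set $Z$ as a nonempty sum of members of $\{x,x^2,x^3:x\in Z\}\cup\{xy:x\neq y\}$, and then concludes by additive idempotence that there are only finitely many such sums.
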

\begin{proof}
For a finite variable set $Z$, equation~\eqref{long-sum} expresses every $Z$-term as a nonempty sum of members of the finite set
\[
 \{x,x^2,x^3:x\in Z\}\cup\{xy:x,y\in Z,\ x\neq y\}.
\]
Additive idempotence leaves only finitely many such sums.
\end{proof}

\Needspace{10\baselineskip}
\begin{lemma}\label{quadratic}
Let $\bh$ be a nonempty sum of additive subterms of length two, and let $Y=\ct{\bh}$. There are pairwise disjoint sets $C,U_1,V_1,\ldots,U_m,V_m$, where $m\geq0$ and each $U_i,V_i$ is nonempty, such that
\begin{equation}\label{quadratic-rep}
 Y=C\mathbin{\dot\cup}\bigcup_{i=1}^m(U_i\mathbin{\dot\cup}V_i),
 \qquad \bh\approx\sum_{x\in C}x^2+\sum_{i=1}^m\ellD{U_i}\ellD{V_i}.
\end{equation}
If $x^2$ occurs in $\bh$, then $x\in C$. Each additive subterm $xy$ of $\bh$ has either both factors in $C$, or one in $U_i$ and the other in $V_i$ for some $i$.
\end{lemma}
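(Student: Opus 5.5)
The plan is to read $\bh$ as a graph and close it under the identities of $\Sigma_{\cK}$. Let $G$ be the graph on vertex set $Y=\ct{\bh}$ with an edge $xy$ for each additive subterm $xy$ of $\bh$ with $x\neq y$, and a loop at $x$ for each additive subterm $x^2$. Every variable of $Y$ occurs in some additive subterm of length two, so every vertex is incident to an edge or a loop.

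The key tool is the following walk-closure property, derived from \eqref{P}. Suppose $\bh$ contains the edges of a walk $y_0y_1,\,y_1y_2,\ldots,y_{k-1}y_k$ of odd length $k$, where vertices may repeat and a loop counts as a step of length one. Then
\[
 \bh\approx\bh+y_0y_k .
\]
We prove this by induction on $k$. The case $k=1$ is trivial. For $k\geq3$, apply \eqref{P} under the substitution $(x_1,x_2,x_3,x_4)\mapsto(y_{k-3},y_{k-2},y_{k-1},y_k)$. Substitutions may identify variables, so this is legitimate even when vertices repeat. This adds $y_{k-3}y_k$ and shortens the walk by two. In particular, an odd closed walk through $x$ yields $x^2$. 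A loop at $x$ together with an edge $xy$ gives the odd closed walk $y,x,x,y$, and hence $y^2$.

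Now split $G$ into connected components. Call a component \emph{non-bipartite} if it contains a loop or an odd cycle; let $C$ be the union of the vertex sets of these components. In such a component every vertex lies on an odd closed walk: go to the loop or odd cycle, traverse it, and return. So $\bh\approx\bh+\sum_{x\in C}x^2$. Conversely, any edge $xy$ with $x,y\in C$ satisfies $xy\preceq x^2+y^2$ by \eqref{B}. Hence the part of $\bh$ supported on $C$ is equivalent to $\sum_{x\in C}x^2$.

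The remaining components are bipartite and loop-free, with bipartition $U_i\mathbin{\dot\cup}V_i$. Each such component has at least one edge, so both $U_i$ and $V_i$ are nonempty. For $u\in U_i$ and $v\in V_i$, connectivity gives a path from $u$ to $v$, and its length is odd. The closure property therefore adds $uv$, so this part of $\bh$ is equivalent to $\sum_{u\in U_i,v\in V_i}uv$. By distributivity this equals $\ellD{U_i}\ellD{V_i}$.

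Adding the pieces gives \eqref{quadratic-rep}. The side conditions are immediate from the construction: a loop at $x$ forces $x\in C$, and each original edge lies inside one component, so it joins two vertices of $C$ or crosses some $U_i,V_i$.

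The main obstacle is the walk-closure step, specifically justifying \eqref{P} along walks with repeated vertices and loops. I would check that the substitution instance is valid with coinciding variables, and that loops behave as length-one steps. The rest is routine graph bookkeeping.
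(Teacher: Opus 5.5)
Your proof is correct. Its engine is the same as the paper's: the instance of \eqref{P} that shortcuts a three-step walk $ab,bc,cd$ to $ad$ (the paper's implication \eqref{three-products}), applied with repeated variables and loops. Odd closed walks then yield squares (the paper's \eqref{square-propagation}), and \eqref{B} absorbs products of two squared variables.

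Where you differ is in how the decomposition is extracted.
\begin{itemize}
\item The paper first saturates. It passes to the set $\mathcal S$ of all length-two products $\bw$ with $\bh\approx\bh+\bw$ and takes $C$ to be the variables whose squares lie in $\mathcal S$. It then peels off one block at a time, as the neighbourhoods $U=\{x:xb\in\mathcal S\}$ and $V=\{y:ay\in\mathcal S\}$ of a chosen product $ab$, and checks disjointness, completeness and closedness by hand before iterating.
\item You read all blocks off the original graph of $\bh$ at once. $C$ is the union of the non-bipartite components, and the pairs $U_i,V_i$ are the bipartition classes of the bipartite components. Parity of walks replaces the neighbourhood bookkeeping.
\end{itemize}
Your version gives an explicit description of $C$ and the $U_i,V_i$ in terms of $\bh$ alone (components and bipartiteness), and avoids the iteration. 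The paper's version never mentions walks, components or bipartiteness. Because it works inside a saturated set, it also never has to distinguish original products from derived ones; it needs only one application of \eqref{P} per step.

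One small point of phrasing. After the first shortcut, the product $y_{k-3}y_k$ is in general not an additive subterm of $\bh$ itself, so the induction hypothesis cannot be applied to $\bh$ literally. State the walk-closure for walks in the set of products $\bw$ with $\bh\approx\bh+\bw$, which is exactly the paper's $\mathcal S$. Alternatively, apply the hypothesis to the equivalent term $\bh+y_{k-3}y_k$. With that wording the argument is complete.
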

\begin{proof}
Write $Y=\{a_1,\ldots,a_n\}$, with the variables $a_i$ pairwise distinct, and put
\[
 \mathcal S=\{a_i a_j:1\leq i\leq j\leq n,
       \ \Sigma_{\cK}\vdash\bh\approx\bh+a_i a_j\}.
\]
This is a finite set containing every additive subterm of $\bh$. By adding its members successively,
\begin{equation}\label{s-sum}
 \bh\approx\bh+\sum_{\bw\in\mathcal S}\bw
 \approx\sum_{\bw\in\mathcal S}\bw.
\end{equation}
The second step uses additive idempotence. If $ab,bc,cd\in\mathcal S$, then
\[
 \bh\approx\bh+ab+bc+cd
 \approx\bh+ab+bc+cd+ad\just{\eqref{P}}
 \approx\bh+ad.
\]
The last step removes $ab,bc,cd$ by the identities defining their membership in $\mathcal S$. Thus
\begin{equation}\label{three-products}
 ab,bc,cd\in\mathcal S\quad\Longrightarrow\quad ad\in\mathcal S.
\end{equation}
The variables in this implication need not be distinct. Taking $(a,b,c,d)=(b,a,a,b)$ gives
\begin{equation}\label{square-propagation}
 a^2,ab\in\mathcal S\quad\Longrightarrow\quad b^2\in\mathcal S.
\end{equation}
Put $C=\{a\in Y:a^2\in\mathcal S\}$ and $D=Y\setminus C$. By \eqref{square-propagation}, no member of $\mathcal S$ has one factor in $C$ and the other in $D$. The sum of the members with content in $C$ is equivalent to $\sum_{a\in C}a^2$, since \eqref{B} removes each additional product.

Suppose $D\neq\varnothing$. Every variable of $Y$ occurs in $\mathcal S$, so there is $ab\in\mathcal S$ with $a,b\in D$. Necessarily $a\neq b$. Define
\[
 U=\{x\in D:xb\in\mathcal S\},\qquad
 V=\{y\in D:ay\in\mathcal S\}.
\]
These sets are nonempty because $a\in U$ and $b\in V$. If $z\in U\cap V$, then $zb,ba,az\in\mathcal S$, so \eqref{three-products} gives $z^2\in\mathcal S$, contrary to $z\in D$. Hence $U\cap V=\varnothing$. For $x\in U$ and $y\in V$, the products $xb,ba,ay$ belong to $\mathcal S$, so
\begin{equation}\label{complete-products}
 xy\in\mathcal S\qquad(x\in U,\ y\in V).
\end{equation}
Conversely, let $x\in U$ and $xz\in\mathcal S$. Then $z\notin C$ by \eqref{square-propagation}; from $ab,bx,xz\in\mathcal S$, equation~\eqref{three-products} gives $az\in\mathcal S$. Thus $z\in V$. Similarly, if $y\in V$ and $yz\in\mathcal S$, then $ba,ay,yz\in\mathcal S$ give $bz\in\mathcal S$, so $z\in U$. Consequently,
\[
 \{\bw\in\mathcal S:\ct{\bw}\cap(U\cup V)\neq\varnothing\}
 =\{xy:x\in U,\ y\in V\}.
\]
Remove $U\cup V$ from $D$ and repeat the construction. The displayed equality ensures that each remaining variable still occurs with another remaining variable. Since $D$ is finite, the process gives disjoint pairs $U_i,V_i$ exhausting $D$. Their products have sum $\sum_i\ellD{U_i}\ellD{V_i}$, by distributivity. Equation~\eqref{s-sum} now proves \eqref{quadratic-rep}. The final assertions follow because all original additive subterms of $\bh$ belong to $\mathcal S$.
\end{proof}

\begin{proposition}\label{representation}
Every term is equivalent in $\cK$ to a term
\begin{equation}\label{normal}
 \bu=\sum_{x\in L}x+\sum_{x\in C}x^r
      +\sum_{i=1}^m\ellD{U_i}\ellD{V_i},
\end{equation}
where $r\in\{2,3\}$, the sets $L,C,U_1,V_1,\ldots,U_m,V_m$ are pairwise disjoint, their union is nonempty, and each $U_i,V_i$ is nonempty.
\end{proposition}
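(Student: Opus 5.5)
We take an arbitrary term $\bt$ and split its additive subterms by length. By \eqref{long-sum}, every additive subterm of length at least three can be replaced by $\sum_{x\in\ct{\bw}}x^3$. Hence $\bt$ is equivalent to $\bt_1+\bh+\bt_3$, where:
\begin{enumerate}
\item $\bt_1$ is a sum of variables;
\item $\bh$ is a sum of length-two additive subterms;
\item $\bt_3$ is a sum of cubes.
\end{enumerate}
Any of these parts may be empty. We then split into two cases according to whether $\bt_3$ is empty.

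\textbf{Case $\bt_3$ empty.} If $\bh$ is empty, take $L=\ct{\bt_1}$ and all other sets empty. Otherwise apply Lemma~\ref{quadratic} to $\bh$ to obtain $\bh\approx\sum_{x\in C}x^2+\sum_i\ellD{U_i}\ellD{V_i}$ with $C,U_i,V_i$ pairwise disjoint. A variable $x\in\ct{\bt_1}\cap\ct{\bh}$ is a factor of some additive subterm $\bw$ of $\bh$, so $\bh\approx\bh+x$ by \eqref{factor}, and such linear terms may be deleted. Taking $L=\ct{\bt_1}\setminus\ct{\bh}$ and $r=2$ gives \eqref{normal}, since $L$ is disjoint from $\ct{\bh}=C\cup\bigcup_i(U_i\cup V_i)$.

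\textbf{Case $\bt_3$ nonempty.} Put $E=\ct{\bt_3}$, and use $r=3$. Every variable in $E$ absorbs everything else touching it. For $x\in E$, \eqref{A} gives $x^3\approx x^3+x^2+x$, so occurrences of $x$ and $x^2$ may be removed. For a product $xy$ with $x\in E$, we use \eqref{cube-square} on $x^3+y^2$ and \eqref{cube-product}, that is, $x^3y\approx x^3+y^3$. From these we aim to show $x^3+xy\approx x^3+y^3$.

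The forward direction is as follows: by \eqref{B}, $x^3+y^3\approx x^6+y^2\cdots$; more directly, $x^3+y^3\approx x^3+y^2+y^3$, and $(x^2,y)$ in \eqref{B} gives $x^4+y^2+x^2y$, which absorbs $xy$ via \eqref{A}. The reverse direction $x^3+xy\preceq\cdots$ needs $x^3+xy\approx x^3+xy+y^3$. Here we try $xy\cdot x^2=x^3y\approx x^3+y^3$ together with the \eqref{P}-derived identity $xyz\approx xyz+x^2y$ from the proof of Lemma~\ref{basic}. Failing that, we use \eqref{P} with substitution $(y,x,x^2,x)$, giving $xy+x^3+x^3\approx\cdots+xy\cdot$. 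This absorption identity
\[
 x^3+xy\approx x^3+y^3
\]
is the main obstacle, and it must be extracted from \eqref{P}. Once it holds, we iterate: every variable linked by a product to a cubed variable becomes cubed, as does each $y$ with $y^2$ present when some cube is present (by \eqref{cube-square}).

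After this closure, let $C'$ be the set of cubed variables; no remaining product or square involves $C'$. If squares remain outside $C'$, then \eqref{cube-square} converts them all to cubes. Thus the remaining quadratic part $\bh'$ has no squares, and Lemma~\ref{quadratic} applied to it yields $C=\varnothing$ within $\bh'$ together with blocks $U_i,V_i$; its "If $x^2$ occurs" clause confirms that $C$ is empty, since $\bh'$ contains no squares and any $C$-part would force squares in $\mathcal S$. Indeed, we should check that Lemma~\ref{quadratic}'s $C$ could be nonempty even without squares in $\bh'$, through odd cycles. If it is nonempty, the resulting $x^2$ terms are again converted to cubes by \eqref{cube-square} in the presence of $\bt_3$. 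Finally, linear terms whose variables lie in the cubed set or in some $U_i\cup V_i$ are removed by \eqref{factor}, which yields \eqref{normal} with $r=3$ and pairwise disjoint sets.
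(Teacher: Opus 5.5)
Your overall plan can be made to work, but there is a genuine gap exactly where you locate the ``main obstacle''. In the case where $\bt_3$ is nonempty you never derive $x^3+xy\approx x^3+xy+y^3$. Everything that follows depends on it: the closure that cubes every variable linked to a cube, and the claim that no product touches $C'$. The attempts you list do not produce the identity. The substitution $(y,x,x^2,x)$ in \eqref{P} has $x_1x_4=xy$, which already occurs on the left, so that instance is trivial. The identity $x^3y\approx x^3+y^3$ is no help unless you can obtain the product $x^3y$ from the sum $x^3+xy$, and nothing in your argument does that. You are right that \eqref{P} is indispensable: $\bN$ satisfies \eqref{A} and \eqref{B}, yet $x^3+xy$ and $x^3+y^3$ take the values $1$ and $3$ at $x=0$, $y=1$.

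The missing idea is the instance $(x_1,x_2,x_3,x_4)\mapsto(y,x,x,y)$ of \eqref{P}, which reads $x^2+xy\approx x^2+xy+y^2$; this is \eqref{square-propagation}. Combining it with \eqref{A} (first and last steps) and \eqref{cube-square} (third step) gives
\[
 x^3+xy\approx x^3+x^2+xy\approx x^3+x^2+xy+y^2
 \approx x^3+x^2+xy+y^3\approx x^3+xy+y^3 .
\]
With this in place, your closure argument and your final step go through. That includes your correct afterthought that Lemma~\ref{quadratic} can produce a nonempty $C$ from odd cycles such as $xy+yz+zx$, which \eqref{cube-square} then cubes.

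The paper avoids the iteration altogether. It uses \eqref{A} to add $x^2$ to the quadratic part $\bh$ for every cubed variable $x$. Lemma~\ref{quadratic}, whose set $\mathcal S$ is already closed under \eqref{square-propagation} and \eqref{three-products}, then returns $C\supseteq T$ with no product linking $C$ to the rest. A single cube $z^3$ then turns each $x^2$ with $x\in C$ into $x^3$ via \eqref{cube-square}.
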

\begin{proof}
By \eqref{long-sum}, every term is equivalent to a sum of additive subterms of the forms $x,x^2,x^3,xy$ with $x\neq y$. If an additive subterm $x$ of length one also occurs as a factor of a longer additive subterm, remove $x$ using \eqref{A}. Let $L$ be the set of remaining variables of length one, and let $Q,T$ be the sets of variables whose squares and cubes occur. Let $E$ be the set of the remaining products of distinct variables. Adding the squares of the variables in $T$, by \eqref{A}, gives
\[
 \sum_{x\in L}x+\bh+\sum_{x\in T}x^3,
 \qquad
 \bh=\sum_{x\in Q\cup T}x^2+\sum_{\bw\in E}\bw.
\]
If the two indexing sets in $\bh$ are empty, the original term is a sum of variables. Otherwise apply Lemma~\ref{quadratic} to $\bh$. We obtain
\[
 \sum_{x\in L}x+\sum_{x\in C}x^2+\sum_{x\in T}x^3
       +\sum_{i=1}^m\ellD{U_i}\ellD{V_i},
 \qquad T\subseteq C.
\]
If $T$ is empty, take $r=2$. If $T$ is nonempty, choose $z\in T$ and use \eqref{cube-square} for each $x\in C\setminus\{z\}$:
\[
 \begin{aligned}
 \sum_{x\in C}x^2+\sum_{x\in T}x^3
 &\approx z^3+\sum_{x\in C\setminus\{z\}}x^2
             +\sum_{x\in T\setminus\{z\}}x^3\\
 &\approx z^3+\sum_{x\in C\setminus\{z\}}x^3
             +\sum_{x\in T\setminus\{z\}}x^3
 \approx\sum_{x\in C}x^3.
 \end{aligned}
\]
This yields \eqref{normal} with $r=3$.
\end{proof}

\begin{corollary}\label{original-support}
Let $\bv=\bv_1+\cdots+\bv_s$ be a term and choose its representation \eqref{normal} by the preceding construction. Then:
\begin{enumerate}
\item a variable in $L$ occurs in $\bv$ only as an additive subterm of length one;
\item an additive subterm of $\bv$ involving a variable in $U_i\cup V_i$ is either a single variable or a product $ab$ with $a\in U_i$ and $b\in V_i$;
\item if some $\bv_j$ has length at least three, the representation has $r=3$.
\end{enumerate}
\end{corollary}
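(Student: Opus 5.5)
The plan is to trace each original additive subterm $\bv_j$ through the construction in the proof of Proposition~\ref{representation}, keeping track of which variables land in $L$, in $C$, or in some $U_i\cup V_i$. Recall the steps. First, \eqref{long-sum} replaces every $\bv_j$ of length at least three by $\sum_{x\in\ct{\bv_j}}x^3$. Next, variables of length one that occur as factors of longer subterms are removed. Then squares of cubed variables are added. Finally, Lemma~\ref{quadratic} is applied to $\bh$, and \eqref{cube-square} is used. The point is that $L$ is, by construction, the set of variables surviving as length-one subterms after the removal step. The set $C$ is exactly $\{a:a^2\in\mathcal S\}$ from Lemma~\ref{quadratic}, and $T\subseteq C$.

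For (iii), suppose some $\bv_j$ has length at least three. Then \eqref{long-sum} produces the cubes $x^3$ for $x\in\ct{\bv_j}\neq\varnothing$, so $T\neq\varnothing$, and the proof takes $r=3$.

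For (i), let $x\in L$. Then $x$ was not removed by \eqref{A}. So in the term obtained after applying \eqref{long-sum}, $x$ is not a factor of any longer additive subterm. We check that this forces $x$ to occur in $\bv$ only as a length-one subterm. If $x$ occurred in some $\bv_j$ of length two, that product ($x^2$ or $xy$) survives into the term and contains $x$. If $x$ occurred in some $\bv_j$ of length at least three, then $x^3$ appears. Either way $x$ would have been removed. Hence $x\notin\ct{\bh}\cup T$, and disjointness in \eqref{normal} is consistent.

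For (ii), take an additive subterm $\bv_j$ involving a variable $a\in U_i\cup V_i$. By (iii)'s reasoning, if $\len{\bv_j}\geq3$ then $a^3$ occurs, so $a\in T\subseteq C$. This contradicts the disjointness of $C$ and $U_i\cup V_i$. If $\bv_j=a^2$, then $a\in C$ by the final clause of Lemma~\ref{quadratic}, again a contradiction. If $\bv_j=ab$ with $a\neq b$, then $ab$ is an additive subterm of $\bh$. The last assertion of Lemma~\ref{quadratic} then says that either both factors lie in $C$ (impossible, since $a\notin C$) or one lies in $U_k$ and the other in $V_k$ for some $k$. Since $a\in U_i\cup V_i$ and the blocks are disjoint, $k=i$. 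The only remaining possibility is that $\bv_j$ is the single variable $a$.

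The main obstacle is bookkeeping rather than mathematics: the length-two subterms $xy$ of $\bv$ must genuinely be additive subterms of $\bh$. This fails if the removal step could delete them, but that step deletes only length-one subterms. One must also confirm that the \eqref{cube-square} step changes only exponents on $C$ and does not alter $L$ or the pairs $U_i,V_i$; this is immediate from the displayed computation.
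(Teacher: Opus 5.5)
Your proposal is correct and follows the paper's route: (i) comes from how $L$ is defined, (ii) from placing variables of squares and of long subterms in $C$ (via $T\subseteq C$ and the final clause of Lemma~\ref{quadratic}), and (iii) from $T\neq\varnothing$. You only spell out the bookkeeping that the paper leaves implicit, such as length-two products passing unchanged into $\bh$.
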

\begin{proof}
The definition of $L$ proves (i). A variable occurring in a square or in an additive subterm of length at least three has its square in $\bh$, and therefore belongs to $C$. Lemma~\ref{quadratic} gives (ii). A subterm of length at least three makes $T$ nonempty, which gives (iii).
\end{proof}

By \eqref{one-sided} and Proposition~\ref{representation}, every identity is equivalent within $\cK$ to finitely many identities $\bu\approx\bu+\bq$, where $\bu$ has form \eqref{normal} and $\bq$ is one of $x,x^2,x^3,xy$, with $x\neq y$ in the last case.
\begin{lemma}\label{ten}
Each such identity is equivalent within $\cK$ to one identity involving at most ten variables.
\end{lemma}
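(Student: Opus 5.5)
Let $\bu\approx\bu+\bq$ be such an identity, with $\bu$ in the form \eqref{normal} and $\bq\in\{x,x^2,x^3,xy\}$. The plan is to identify variables of $\bu$ that play the same role relative to $\bq$. The variables of $\bq$ are kept distinct (at most two). Every other block of the representation is merged: each of $L$, $C$, $U_i$, $V_i$, minus the variables of $\bq$, collapses to a single variable. The resulting identity $\bu'\approx\bu'+\bq$ should be equivalent to the original one within $\cK$. One direction holds because $\bu'\approx\bu'+\bq$ is a substitution instance of $\bu\approx\bu+\bq$: substitute the representative for each merged variable, and note that $\ellD{U}$ becomes a single variable by idempotence. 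The other direction needs a substitution sending $\bu'$ to $\bu$. For this I would map each representative variable to the sum $\ellD{D}$ of its block $D$. Then Lemma~\ref{sum-powers} and distributivity turn $\bu'$ back into $\bu$, and $\bq$ is fixed because its variables are not merged.

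The number of blocks is still unbounded, since $m$ is arbitrary. So the second step is to discard blocks $U_i,V_i$ disjoint from $\ct{\bq}$, except a bounded number, and show this does not affect validity. For this I would use a semantic separation argument combined with the exclusion identity \eqref{P}. My guess is that only the following matter: the blocks containing the variables of $\bq$ (at most two pairs $(U_i,V_i)$), the sets $L$ and $C$, and perhaps one or two auxiliary pairs to witness the value of $r$. Removing a pair $U_j,V_j$ with $U_j\cup V_j$ disjoint from $\ct{\bq}$ gives a term $\bu''$. From $\bu''\approx\bu''+\bq$ we recover the original identity by adding $\ellD{U_j}\ellD{V_j}$ to both sides. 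The converse direction is the delicate one. It requires a substitution mapping the variables of $U_j\cup V_j$ into existing blocks so that $\ellD{U_j}\ellD{V_j}$ is absorbed by $\bu''$ via \eqref{A} or \eqref{factor}. Such a substitution exists whenever $\bu''$ retains at least one product $\ellD{U_k}\ellD{V_k}$ whose variables are outside $\ct{\bq}$: send $U_j\mapsto U_k$ and $V_j\mapsto V_k$. Likewise, if $C\setminus\ct{\bq}$ is nonempty, send $U_j\cup V_j$ into it, using \eqref{B}. Hence at most one spare pair is retained.

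Counting then gives the bound. There are at most two variables of $\bq$, each lying in some block. There are at most two $\bq$-pairs, giving four block representatives, one representative for $L$, one for $C$, and one spare pair giving two more. This totals about $2+4+1+1+2=10$, matching the bound of ten variables.

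The main obstacle is making the reductions legitimate when $\bq$ straddles the structure. An example is $\bq=xy$ with $x\in U_i$ and $y\in V_j$, $i\ne j$, or $x\in L$. In such cases the merging substitution must not accidentally create $\bq$ or destroy the block pattern that \eqref{P} and Lemma~\ref{quadratic} respect. I would first handle the case split on the positions of the variables of $\bq$ among $L$, $C$, and the $U_i,V_i$. In each case I would verify that the back-substitution $\text{representative}\mapsto\ellD{D}$ fixes $\bq$ and reproduces $\bu$ exactly up to $\Sigma_{\cK}$.
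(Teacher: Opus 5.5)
Your argument is correct and is essentially the paper's proof. A substitution fixing $\ct{\bq}$ collapses each block minus $\ct{\bq}$ to one variable and folds the pairs disjoint from $\ct{\bq}$ onto a single retained pair; the paper does this in one step by sending all such $U_i\mapsto z_1$, $V_i\mapsto z_2$, where you remove spare pairs one at a time. The converse sends each representative back to the sum of its block and uses that the reduced term is a sub-sum of $\bu$, giving the same count $|\ct{\bq}|+2+2k+2\leq10$.

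The case split in your last paragraph is unnecessary, and so are the appeals to \eqref{P} and to a semantic argument. Every substitution involved fixes $\ct{\bq}$, and the recovery of $\bu$ (exactly, or up to adding a sub-sum of $\bu$) holds uniformly by distributivity and Lemma~\ref{sum-powers}. So the positions of the variables of $\bq$ play no role.
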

\begin{proof}
Put $D=\ct{\bq}$. Renumber the pairs $U_i,V_i$ so that
\[
 (U_i\cup V_i)\cap D\neq\varnothing\quad(1\leq i\leq k),\qquad
 (U_i\cup V_i)\cap D=\varnothing\quad(k<i\leq m).
\]
Since the sets are pairwise disjoint and $|D|\leq2$, we have $k\leq2$. Choose distinct new variables $a,b,a_i,b_i$ for $1\leq i\leq k$, and $z_1,z_2$, outside $\ct{\bu}\cup D$. Define a substitution fixing $D$ by
\[
 \varphi(x)=
 \begin{cases}
 a,&x\in L\setminus D,\\
 b,&x\in C\setminus D,\\
 a_i,&x\in U_i\setminus D,\quad 1\leq i\leq k,\\
 b_i,&x\in V_i\setminus D,\quad 1\leq i\leq k,\\
 z_1,&x\in U_i,\quad k<i\leq m,\\
 z_2,&x\in V_i,\quad k<i\leq m.
 \end{cases}
\]
Other variables are fixed. Put $\bu_0=\varphi(\bu)$. Every pair with $i>k$ has image $z_1z_2$, and $\varphi(\bq)=\bq$. Therefore $\bu\approx\bu+\bq$ implies
\begin{equation}\label{reduced-id}
 \bu_0\approx\bu_0+\bq,
\end{equation}
which involves at most $|D|+2+2k+2\leq10$ variables.

To prove the converse, define $\psi$ to fix $D$ and set
\[
 \psi(a)=\ellD{L\setminus D},\quad
 \psi(b)=\ellD{C\setminus D},\quad
 \psi(a_i)=\ellD{U_i\setminus D},\quad
 \psi(b_i)=\ellD{V_i\setminus D}\quad(1\leq i\leq k).
\]
Each prescription is used only when the corresponding variable occurs in $\bu_0$, so its right-hand side is nonempty. Distributivity restores the first $k$ product parts. For the power part, Lemma~\ref{sum-powers} gives
\[
 \psi\varphi\left(\sum_{x\in C}x^r\right)
 \approx\sum_{x\in C\cap D}x^r+
       \left(\sum_{x\in C\setminus D}x\right)^r
 \approx\sum_{x\in C}x^r.
\]
The part of length one is restored as well.

If $k=m$, then $\psi(\bu_0)\approx\bu$, and applying $\psi$ to \eqref{reduced-id} proves the assertion. If $k<m$, also set
\[
 \psi(z_1)=\ellD{U_{k+1}},\qquad
 \psi(z_2)=\ellD{V_{k+1}}.
\]
Then $\psi(\bu_0)\approx\bh$, where
\[
 \bh=\sum_{x\in L}x+\sum_{x\in C}x^r
             +\sum_{i=1}^{k+1}\ellD{U_i}\ellD{V_i}.
\]
Each additive subterm of $\bh$, after distributing, occurs in $\bu$, so $\bu+\bh\approx\bu$. Since $\psi(\bq)=\bq$, equation~\eqref{reduced-id} yields
\[
 \begin{aligned}
 \bu&\approx\bu+\bh
 \approx\bu+\psi(\bu_0)\\
 &\approx\bu+\psi(\bu_0+\bq)\just{\eqref{reduced-id}}\\
 &\approx\bu+\bh+\bq
 \approx\bu+\bq.
 \end{aligned}
\]
Thus \eqref{reduced-id} is equivalent to the original identity within $\cK$.
\end{proof}

\begin{theorem}\label{K-finite-lattice}
The subvariety lattice of $\cK$ is finite. Every subvariety of $\cK$ has a finite equational basis involving at most ten variables.
\end{theorem}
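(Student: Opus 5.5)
The plan combines local finiteness (Corollary~\ref{K-lf}) with the variable reduction of Lemma~\ref{ten}. Let $Z$ be a fixed set of ten variables and let $F$ be the free algebra of $\cK$ on $Z$. By Corollary~\ref{K-lf}, $F$ is finite. Hence there are only finitely many identities in the variables of $Z$, counted up to equivalence modulo $\Sigma_{\cK}$: each such identity is determined by a pair of elements of $F$. Choose a finite set $\Theta$ of representatives, one for each such pair.

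For a subvariety $\cV\leq\cK$, put $\Theta_{\cV}=\Theta\cap\Id(\cV)$. I will show that $\cV=[\Sigma_{\cK}\cup\Theta_{\cV}]$. One inclusion is immediate: $\cV$ satisfies $\Sigma_{\cK}\cup\Theta_{\cV}$. For the other, take an arbitrary identity of $\cV$. By \eqref{one-sided} and Proposition~\ref{representation}, it is equivalent within $\cK$ to finitely many identities $\bu\approx\bu+\bq$, with $\bu$ in form \eqref{normal}. Each of these holds in $\cV$, because each is derivable within $\cK$ from the original identity. By Lemma~\ref{ten}, each is equivalent within $\cK$ to an identity $\bu_0\approx\bu_0+\bq$ in at most ten variables. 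I rename those variables into $Z$; renaming preserves equivalence, since it is an injective substitution with an inverse. The renamed identity holds in $\cV$, so modulo $\Sigma_{\cK}$ it is equivalent to a member of $\Theta_{\cV}$. Thus every identity of $\cV$ follows from $\Sigma_{\cK}\cup\Theta_{\cV}$.

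This gives a finite basis. The identities of $\Theta_{\cV}$ and the defining identities \eqref{A}, \eqref{B}, \eqref{P} involve at most ten variables; \eqref{P} uses four. The commutative ai-semiring axioms use at most three. So the basis involves at most ten variables. The map $\cV\mapsto\Theta_{\cV}$ is injective, because $\cV$ is recovered from $\Theta_{\cV}$. Therefore the subvariety lattice has at most $2^{|\Theta|}$ elements and is finite.

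The main point to check is the bookkeeping in the step that passes from an arbitrary identity to the ten-variable ones. The concern is that every equivalence used is an equivalence relative to $\Sigma_{\cK}$, so that it persists in any subvariety. This holds because Proposition~\ref{representation} and Lemma~\ref{ten} are derivations from $\Sigma_{\cK}$ alone. A second point is that Lemma~\ref{ten} applies only after the reduction to normal form; I handle this by applying Proposition~\ref{representation} first, as above.
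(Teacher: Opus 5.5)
Your argument is correct and is essentially the paper's: finiteness of the ten-generated relatively free algebra of $\cK$ (Corollary~\ref{K-lf}), combined with the reduction via \eqref{one-sided}, Proposition~\ref{representation} and Lemma~\ref{ten}, shows that every subvariety is defined within $\cK$ by a subset of one fixed finite set of ten-variable identities. Two minor remarks: $\bq$ must also be reduced to one of $x,x^2,x^3,xy$ before invoking Lemma~\ref{ten}, since that lemma uses $|\ct{\bq}|\leq2$; and your choice of $\Theta\cap\Id(\cV)$, in place of the paper's subset $\Delta$ built from a given defining set $\Sigma$, is cosmetic, though the paper's version also yields Corollary~\ref{K-hfb}.
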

\begin{proof}
Fix $Z=\{z_1,\ldots,z_{10}\}$. By Corollary~\ref{K-lf}, the relatively free algebra $F_{\cK}(Z)$ is finite. Choose a finite set $R$ of terms representing its elements, and put
\[
 \Theta=\{\br\approx\bs:\br,\bs\in R\}.
\]
Let $\cV=\cK\cap[\Sigma]$ be a subvariety of $\cK$. By \eqref{one-sided}, Proposition~\ref{representation}, and Lemma~\ref{ten}, each identity $\varepsilon\in\Sigma$ is equivalent in $\cK$ to finitely many identities on at most ten variables. Rename their variables into $Z$ and replace each term by its representative in $R$. This gives a subset $\Theta_\varepsilon\subseteq\Theta$ equivalent to $\varepsilon$ within $\cK$.

The set $\Delta=\bigcup_{\varepsilon\in\Sigma}\Theta_\varepsilon$ is a subset of the finite set $\Theta$, and $\cV=\cK\cap[\Delta]$. Hence only finitely many subvarieties of $\cK$ occur. Moreover, $\Sigma_{\cK}\cup\Delta$ is a finite basis for $\cV$. The identities in $\Sigma_{\cK}$ use at most four variables, and those in $\Delta$ use at most ten.
\end{proof}

\begin{corollary}\label{K-hfb}
The variety $\cK$ is hereditarily finitely based. For every identity set $\Sigma$, there is a finite subset $\Sigma_0\subseteq\Sigma$ such that
\[
 \cK\cap[\Sigma]=\cK\cap[\Sigma_0].
\]
\end{corollary}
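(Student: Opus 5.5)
The first assertion follows from Theorem~\ref{K-finite-lattice}: every subvariety of $\cK$ has a finite basis, so $\cK$ is hereditarily finitely based. The work is in the second assertion, where $\Sigma_0$ must be a subset of the given $\Sigma$ rather than an arbitrary equivalent finite set. I will derive it from the finiteness of the subvariety lattice, using a chain argument.

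Fix $\Sigma$ and enumerate it as $\varepsilon_1,\varepsilon_2,\ldots$. The set is countable because there are only countably many identities over the countable variable set $X$. Put
\[
 \cV_n=\cK\cap[\{\varepsilon_1,\ldots,\varepsilon_n\}].
\]
These varieties form a descending chain $\cV_1\geq\cV_2\geq\cdots$ in the subvariety lattice of $\cK$. That lattice is finite by Theorem~\ref{K-finite-lattice}, so the chain stabilizes at some $N$. Take $\Sigma_0=\{\varepsilon_1,\ldots,\varepsilon_N\}$.

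It remains to check that $\cK\cap[\Sigma]=\cV_N$. Clearly $\cK\cap[\Sigma]\leq\cV_N$. For the converse, each $\varepsilon_n$ holds in $\cV_{\max(n,N)}=\cV_N$, so $\cV_N$ satisfies all of $\Sigma$. Hence $\cV_N\leq\cK\cap[\Sigma]$, and the two varieties are equal. If $\Sigma$ is empty or finite, the claim is trivial.

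There is an alternative, more constructive route that avoids enumeration. Following the proof of Theorem~\ref{K-finite-lattice}, each $\varepsilon\in\Sigma$ corresponds to a subset $\Theta_\varepsilon$ of the finite set $\Theta$. Only finitely many distinct subsets occur, so choosing one $\varepsilon$ for each realized subset yields $\Sigma_0$.

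No step is a real obstacle. The only point needing care is that $\Sigma_0$ is drawn from $\Sigma$ itself, and both arguments respect this.
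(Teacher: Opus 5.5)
Your proof is correct, but your main argument takes a different route from the paper's.

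The paper proves the second assertion by reusing the construction in the proof of Theorem~\ref{K-finite-lattice}. Each $\varepsilon\in\Sigma$ is equivalent within $\cK$ to a subset $\Theta_\varepsilon$ of the finite set $\Theta$, and the basis is $\Delta=\bigcup_{\varepsilon\in\Sigma}\Theta_\varepsilon$. For each $\delta\in\Delta$ one picks a single $\varepsilon_\delta$ with $\delta\in\Theta_{\varepsilon_\delta}$. Then $\Sigma_0=\{\varepsilon_\delta:\delta\in\Delta\}$ implies $\Delta$, and hence $\Sigma$, giving $|\Sigma_0|\leq|\Theta|$.

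Your ``alternative route'' is essentially this argument. You choose one identity per realized subset $\Theta_\varepsilon$ rather than one per element of $\Delta$, which only weakens the size bound to $2^{|\Theta|}$.

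Your chain argument, by contrast, uses nothing but the finiteness of the subvariety lattice; the descending chain condition would already suffice. So it is independent of the ten-variable reduction and applies verbatim to any variety with that lattice property. What it gives up is an explicit bound on $|\Sigma_0|$. You can recover one by keeping only the indices where $\cV_n<\cV_{n-1}$: the kept identities still define $\cV_N$, so $|\Sigma_0|$ is at most the length of the lattice.

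The enumeration is harmless because $X$ is countable, but it can be avoided altogether. Choose a finite $\Sigma_0\subseteq\Sigma$ for which $\cK\cap[\Sigma_0]$ is minimal among the varieties $\cK\cap[F]$ with $F\subseteq\Sigma$ finite. Comparing with $\Sigma_0\cup\{\varepsilon\}$ shows that every $\varepsilon\in\Sigma$ holds in $\cK\cap[\Sigma_0]$.

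Finally, the second assertion also follows from the first by compactness of equational logic, since $\cK$ itself is finitely based. A finite basis of $\cK\cap[\Sigma]$ is derivable from $\Sigma_{\cK}$ together with finitely many members of $\Sigma$.
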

\begin{proof}
The first assertion follows from Theorem~\ref{K-finite-lattice}. In its proof, for each $\delta\in\Delta$ choose $\varepsilon_\delta\in\Sigma$ with $\delta\in\Theta_{\varepsilon_\delta}$. Then the finite set $\Sigma_0=\{\varepsilon_\delta:\delta\in\Delta\}$ is equivalent to $\Delta$, and hence to $\Sigma$, relative to $\cK$.
\end{proof}

\section{The unique limit subvariety}\label{unique}
\begin{proof}[Proof of Theorem~\ref{main}]
By Theorem~\ref{exclude}, conditions (i) and (ii) are equivalent, and imply $\cV\leq\cK$. Theorem~\ref{K-finite-lattice} then gives (iii). Conversely, a hereditarily finitely based variety cannot contain the nonfinitely based variety $\V(\TR)$, so (iii) implies (i).

Let $\mathcal L\leq\cA$ be a limit variety. If $\TR\notin\mathcal L$, the equivalence just proved makes $\mathcal L$ finitely based, a contradiction. Thus $\V(\TR)\leq\mathcal L$. Since $\V(\TR)$ is nonfinitely based, it cannot be a proper subvariety of $\mathcal L$. Therefore $\mathcal L=\V(\TR)$. The cited results of~\cite{SRG} give that $\V(\TR)$ is a limit variety and $\TR\in\cM\leq\cA$. This proves uniqueness in both ambient varieties. The finite-lattice assertion and the variable bound are Theorem~\ref{K-finite-lattice}.
\end{proof}

\begin{proposition}\label{K-in-M}
The variety $\cK$ is a subvariety of $\cM$.
\end{proposition}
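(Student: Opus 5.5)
To show $\cK\leq\cM$, it suffices to show that every identity of $\bN$ holds in $\cK$, that is, $\Id(\bN)\subseteq\Id(\cK)$. By \eqref{one-sided} it is enough to treat identities $\bu\approx\bu+\bq$ valid in $\bN$. Every such identity holds in $\cA$-terms after normalization, since \eqref{A} and \eqref{B} hold in $\bN$. So by Proposition~\ref{representation} I may assume that $\bu$ has the form \eqref{normal} and that $\bq$ is one of $x,x^2,x^3,xy$ with $x\neq y$. Indeed, any identity is equivalent within $\cK$ to such reduced ones, but I need the reduction to go the other way. I would argue directly: if $\bu\approx\bu+\bq$ holds in $\bN$ for arbitrary $\bu$ and $\bq$, then replacing $\bu$ by its $\cK$-normal form $\bu'$ need not preserve validity in $\bN$, because the equivalence $\bu\approx\bu'$ holds only in $\cK$.

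The cleaner route is therefore semantic. I would show that $\bN$ \emph{does not} satisfy any identity beyond those of $\cK$ \emph{only in the relevant sense}: every $\cK$-free algebra embeds in (a product of) members of $\cM$. Concretely, I aim to prove that for $\bu$ in normal form \eqref{normal} and $\bq$ of the four listed types, if $\cK\not\models\bu\approx\bu+\bq$, then $\bN\not\models\bu\approx\bu+\bq$. Granting this, take any identity $\bs\approx\bt$ failing in $\cK$. Then some one-sided reduct $\bs\approx\bs+\bq'$ fails in $\cK$. Replacing $\bs$ by its normal form $\bu$ and $\bq'$ by its normal form, which is a sum of terms of the four types, some $\bu\approx\bu+\bq$ fails in $\cK$, hence in $\bN$. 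It remains to transfer the failure back to the original identity. For that I will work with the class $\V(\bN)$ together with the check that $\Sigma_{\cK}\subseteq\Id(\bN)$. By that check, $\bN\in\cK$. The normal-form equivalences therefore hold in $\bN$ too, so failure of the normalized identity in $\bN$ gives failure of the original one. Hence $\Id(\bN)\subseteq\Id(\cK)$.

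So the steps are as follows. (1) Verify that \eqref{P} holds in $\bN$: $\max(x_1+x_2,x_2+x_3,x_3+x_4)\geq x_1+x_4$, since the sum of the first and last entries is at least $x_1+x_4$, so their maximum is at least $(x_1+x_4)/2$. That alone is not enough, so I would use instead $x_1+x_2+x_3+x_4\leq \max(x_1+x_2,x_3+x_4)\cdot 2$. The clean argument is: if $x_1+x_4>\max$ of all three, then $x_1>x_3$ (from $x_3+x_4$) and $x_4>x_2$ (from $x_1+x_2$). This gives $x_2+x_3<x_4+x_1$, which is consistent, so \eqref{P} must be checked more carefully. Take $x_1=x_4=1$ and $x_2=x_3=0$: then the left side is $1$ and $x_1x_4=2$, so \eqref{P} fails. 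Hence I must reinterpret the claim via truncations or via the constants used in \cite{SRG}. Presumably $\cM$ contains other algebras, for instance a finite truncation or $\TR$-like quotients, that generate $\cK$.

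I therefore revise the plan. Show that $\cK$ is generated by finitely many finite algebras, since its lattice is finite and it is locally finite by Corollary~\ref{K-lf}. Exhibit each such algebra, for example a relatively free algebra $F_{\cK}(Z)$ with $|Z|\leq10$, as a member of $\cM$. This works because $F_{\cK}(Z)$ generates $\cK$ by Lemma~\ref{ten}. Then embed $F_{\cK}(Z)$ into a product of quotients of subalgebras of $\bN$, such as truncations $\bN/(n\sim n+1\sim\cdots)$ and their subalgebras $\{0\}\cup[k,\infty)$. The separation step is the main obstacle. For each pair of distinct normal forms $\bu,\bu+\bq$, I must find an assignment into a small truncated max-plus algebra distinguishing them. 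The natural candidates are truncations at level $3$ (where $x^3$ is absorbing), chosen so that assignments of the values $1$ and $2$ realize the bipartite structure of the $U_i,V_i$ blocks.
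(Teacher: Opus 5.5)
\textbf{There is a genuine gap: the proposal does not prove the statement.}

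Your first plan needs the $\cK$-normal-form equivalences to hold in $\bN$, i.e.\ $\bN\in\cK$. You correctly find that \eqref{P} fails in $\bN$, but that is no reason to reinterpret the claim: $\cK$ is simply a proper subvariety of $\cM$. Your revised plan, embedding $F_{\cK}(Z)$ in a product of members of $\cM$, is sound in principle. However, the separating maps must be homomorphisms out of $F_{\cK}(Z)$, so their images must lie in $\cK$. An arbitrary assignment into a truncation does not qualify, since $A_n$ fails \eqref{P} at $(1,0,0,1)$ for $n\geq2$. A failure of a normalized identity $\bu\approx\bu+\bq$ under such an assignment only shows $\cM\not\models\bu\approx\bu+\bq$, which is exactly the useless direction you flagged at the outset.

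The assignments that do extend have totally ordered images in $\cK$, and these cannot realize the bipartite structure at all. For $a,a',b$ in such a chain one always has $aa'\leq ab+a'b$. If $a\leq b$ or $a'\leq b$, this follows from monotonicity of multiplication. Otherwise $b\leq a,a'$, so $b^2\leq ab$, and \eqref{P} at $(a,b,b,a')$ gives it. Yet $xy+x'y$ and $xy+x'y+xx'$ are distinct in $\cK$. The Rees quotient of $\bN^2\setminus\{(0,0)\}$ by the pairs with coordinate sum at least $3$ satisfies \eqref{P}, and it separates them under $x,x'\mapsto(1,0)$, $y\mapsto(0,1)$. So the ``separation step'' you postpone is the whole proof, and the tool you propose for it cannot work. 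You would need non-chain members of $\cK\cap\cM$ together with a complete separation argument for all normal forms.

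The idea you are missing is that no member of $\cK$ is needed. The paper keeps the \emph{original} left side $\bv$ for evaluations in $\bN$ and uses its normal form $\bu$ only for derivations in $\cK$, which removes the direction problem. This works because $\bu$ is built from $\bv$ by the construction of Proposition~\ref{representation}. Corollary~\ref{original-support} then records how the original additive subterms sit relative to $L,C,U_i,V_i$: variables of $L$ occur only alone, a subterm meeting $U_i\cup V_i$ is a variable or a cross product $ab$ with $a\in U_i$, $b\in V_i$, and a subterm of length at least three forces $r=3$.

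Given $\bN\models\bv\approx\bv+\bq$, one proves $\cK\models\bu\approx\bu+\bq$ by cases on $\bq$.
\begin{enumerate}
\item A $0/1$ evaluation gives $\ct{\bq}\subseteq\ct{\bv}$.
\item If $\bq=x^2$, some additive subterm of $\bv$ contains $x$ twice.
\item If $\bq=xy$, it is absorbed when $x,y\in C$ (via \eqref{B}) or when it is a cross product. Otherwise let $D$ be the block containing a factor $x\notin C$. Either the value $1$ on $D$ (if $y\in D$), or $N$ on $D$ and $1$ on $y$ with $N$ larger than every additive subterm length of $\bv$, refutes the identity in $\bN$.
\item If $\len{\bq}\geq3$, evaluating all variables at $1$ gives $r=3$. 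The evaluation $N$ on $D$ and $1$ elsewhere forces $\ct{\bq}\subseteq C$, and then \eqref{long-sum} lets the power part of $\bu$ absorb $\bq$.
\end{enumerate}
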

\begin{proof}
Let $\bv\approx\bv+\bq$ hold in $\bN$, where $\bv=\bv_1+\cdots+\bv_s$ and $\bv_j,\bq\in\Xc$. Assigning $1$ to a variable of $\bq$ outside $\ct{\bv}$ and $0$ to the other variables proves
\begin{equation}\label{content-N}
 \ct{\bq}\subseteq\ct{\bv}.
\end{equation}
Put $d=\max_j\len{\bv_j}$, and let $\bu$ be the representation \eqref{normal} of $\bv$ obtained in Proposition~\ref{representation}. We prove $\bv\approx\bv+\bq$ in $\cK$.

If $\len{\bq}=1$, use \eqref{content-N} and \eqref{factor}. If $\bq=x^2$, evaluate $x$ at $1$ and all other variables at $0$ in $\bN$. Some $\bv_j$ must contain $x$ at least twice, so \eqref{factor} again applies.

Suppose $\bq=xy$, with $x\neq y$. If $x,y\in C$, the power part of $\bu$ absorbs $x^2+y^2$, and then absorbs $xy$ by \eqref{B}. If $x\in U_i$ and $y\in V_i$, or conversely, $xy$ is an additive subterm of $\bu$ after distributing.

In any other case, at least one of $x,y$ lies outside $C$; denote it by $x$. Let $D$ be the part $U_i$ or $V_i$ containing $x$, or let $D=\{x\}$ if $x\in L$. If $y\in D$, assign $1$ to every variable in $D$ and $0$ elsewhere. By Corollary~\ref{original-support}, each original $\bv_j$ contains at most one occurrence from $D$. Thus $\bv\leq1<2=xy$, a contradiction.

If $y\notin D$, it does not lie in the opposite part of the same pair. Choose an integer $N>d$ and evaluate
\[
 t\mapsto\begin{cases}
 N,&t\in D,\\1,&t=y,\\0,&\text{otherwise}.
 \end{cases}
\]
An original additive subterm involving $D$ is a single variable from $D$, or a product whose other variable has value $0$. It has value at most $N$. Every other original additive subterm has value at most $d<N$. Hence $\bv\leq N<N+1=xy$, again a contradiction. This settles length two.

Finally, let $\len{\bq}\geq3$. Evaluating all variables at $1$ gives $d\geq\len{\bq}\geq3$, so Corollary~\ref{original-support} gives $r=3$. Suppose $x\in\ct{\bq}\setminus C$, and define $D$ as above. Assign $N>d$ to the variables in $D$ and $1$ to all other variables. The original additive subterms involving $D$ have value at most $N+1$; all others have value at most $d$. But
\[
 \bq\geq N+\len{\bq}-1\geq N+2,
\]
a contradiction. Thus $\ct{\bq}\subseteq C$. Equation~\eqref{long-sum} gives $\bq\approx\sum_{x\in\ct{\bq}}x^3$, which is absorbed by the power part of $\bu$.

Every identity of $\bN$ therefore holds in $\cK$, by \eqref{one-sided}. Hence $\cK\leq\V(\bN)$.
\end{proof}
\begin{corollary}\label{greatest-hfb}
The variety $\cK$ is the greatest hereditarily finitely based subvariety of both $\cA$ and $\cM$. Their hereditarily finitely based subvarieties are the same finite family.
\end{corollary}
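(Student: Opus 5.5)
The proof assembles Theorem~\ref{main}, Theorem~\ref{exclude}, Theorem~\ref{K-finite-lattice} and Proposition~\ref{K-in-M}.

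\emph{Step 1: $\cK$ is hereditarily finitely based and greatest in $\cA$.} By Theorem~\ref{K-finite-lattice} (equivalently Corollary~\ref{K-hfb}), $\cK$ is hereditarily finitely based. Now let $\cV\leq\cA$ be hereditarily finitely based. By the equivalence (iii)$\Rightarrow$(ii) of Theorem~\ref{main}, $\cV$ satisfies \eqref{P}. Hence $\cV\leq\cK$, and $\cK$ is the greatest hereditarily finitely based subvariety of $\cA$.

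\emph{Step 2: the same holds in $\cM$.} Proposition~\ref{K-in-M} gives $\cK\leq\cM$, so $\cK$ is a hereditarily finitely based subvariety of $\cM$. Any hereditarily finitely based $\cV\leq\cM$ satisfies $\cV\leq\cM\leq\cA$. Step~1 then gives $\cV\leq\cK$. So $\cK$ is also the greatest hereditarily finitely based subvariety of $\cM$.

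\emph{Step 3: the two families coincide and are finite.} In both $\cA$ and $\cM$, the hereditarily finitely based subvarieties are exactly the subvarieties of $\cK$. One inclusion holds because every subvariety of a hereditarily finitely based variety is again hereditarily finitely based. The other inclusion is Steps~1 and~2. Since $\cK\leq\cM$, the two families are literally the same set, namely the subvariety lattice of $\cK$. Theorem~\ref{K-finite-lattice} shows this lattice is finite.

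There is no real obstacle here. The only point needing care is that Proposition~\ref{K-in-M} is what ensures the family inside $\cM$ is not smaller than the family inside $\cA$.
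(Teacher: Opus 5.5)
Your proposal is correct and follows the paper's own proof: Theorem~\ref{main} identifies the hereditarily finitely based subvarieties of $\cA$ with the subvarieties of $\cK$, and Proposition~\ref{K-in-M} places them all inside $\cM\leq\cA$. You simply spell out the greatest-element and same-family steps that the paper compresses into one line.
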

\begin{proof}
Theorem~\ref{main} identifies these varieties with the subvarieties of $\cK$, and Proposition~\ref{K-in-M} places them all in $\cM$.
\end{proof}

\section{Finite truncations and proper subvarieties}\label{structure}
For $k\geq0$, put $I_k=\{k,k+1,k+2,\ldots\}$. The Rees quotient $A_k=\bN/I_k$ has universe $\{0,1,\ldots,k\}$ and operations
\begin{equation}\label{truncations}
 a+b=\max\{a,b\},\qquad a\cdot b=\min\{a+b,k\},
\end{equation}
where the sum inside the minimum is numerical addition. The algebra $A_0$ is trivial, and numerical $0$ is a multiplicative identity in each $A_k$. The algebra $A_2$ is isomorphic to the semiring $S_{53}$ used in~\cite[Theorem~1.2]{SRG}.

\begin{proposition}\label{congruences}
Every congruence of $\bN$ other than equality is the Rees congruence of a unique $I_k$. Moreover, $\bN$ is residually finite, and
\begin{equation}\label{Ak-chain}
 \V(A_0)<\V(A_1)<\V(A_2)<\cdots,
 \qquad \bigvee_{k\geq1}\V(A_k)=\cM.
\end{equation}
\end{proposition}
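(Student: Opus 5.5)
The argument splits into three parts: classify the congruences of $\bN$, deduce residual finiteness, and then establish the chain \eqref{Ak-chain} and its join.

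\textbf{Congruences.} Let $\theta$ be a congruence other than equality, and pick $a<b$ with $a\,\theta\,b$. Multiplication is numerical addition, so adding $c$ gives $a+c\;\theta\;b+c$ for every $c$. Addition is $\max$, so for $a\le c\le b$ we get $c=\max(a,c)\;\theta\;\max(b,c)=b$. Set $k=\min\{a:\ a\,\theta\,b\text{ for some }b\ne a\}$. We aim to show that $I_k$ is a single $\theta$-class and that all other classes are singletons.

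First, from $k\,\theta\,b$ with $b>k$, translating gives $k+j(b-k)\;\theta\;k+(j+1)(b-k)$. Transitivity then yields $k\,\theta\,k+j(b-k)$ for all $j$. Together with the interval-filling observation (applied to $k$ and $k+j(b-k)$, so every $c$ between them is related to the larger endpoint and hence to $k$), every element of $I_k$ is related to $k$.

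Second, no $a<k$ is related to anything else, by minimality of $k$ (if $a\,\theta\,b$ with $b<a$, swap roles). Hence $\theta$ is the Rees congruence of $I_k$. Uniqueness of $k$ is clear, since $k$ is the least element of the nontrivial class. Note $k\ge 0$, and $k=0$ gives the total congruence.

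\textbf{Residual finiteness.} The quotients are $\bN/I_k=A_k$, which are finite. For $a\ne b$, say $a<b$, the map to $A_{b}$ separates them, since $a<b$ remain distinct there. So $\bN$ is a subdirect product of the $A_k$, i.e.\ it embeds in $\prod_k A_k$. This gives $\cM=\bigvee_k\V(A_k)$, and each $A_k\in\cM$ as a quotient of $\bN$.

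\textbf{Strictness of the chain.} For $\V(A_k)\le\V(A_{k+1})$, the map $A_{k+1}\to A_k$, $a\mapsto\min(a,k)$, is a surjective homomorphism. To see this, $\min$ commutes with $\max$, and $\min(\min(a+b,k+1),k)=\min(\min(a,k)+\min(b,k),k)$.

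For strictness, I would exhibit an identity holding in $A_k$ but failing in $A_{k+1}$. A natural candidate is
\[
 x_1x_2\cdots x_k\,y\approx x_1x_2\cdots x_k\,y+x_1\cdots x_k z .
\]
In $A_k$ it holds: if every $x_i\ge1$ the left side is already the top element $k$; if some $x_i=0$, replace $y,z$ by $\min(y,k),\min(z,k)$ and compare. That comparison fails when $z>y$, so this candidate is not right.

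The cleaner choice is $x^{k}\approx x^{k+1}$. In $A_k$, $x^k=\min(kx,k)$ and $x^{k+1}=\min((k+1)x,k)$; both equal $0$ when $x=0$ and $k$ when $x\ge1$. In $A_{k+1}$ at $x=1$ the sides are $k\neq k+1$. This settles strictness, including $k=0$, since $A_0$ is trivial and $A_1$ is not.

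The main obstacle is the congruence classification, specifically showing that the nontrivial class is exactly $I_k$ and contains everything above $k$. The translation-plus-interval argument above handles it.
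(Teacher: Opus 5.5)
Your proof is correct and follows essentially the same route as the paper. You classify congruences via the least element of the nontrivial class, using compatibility with $\max$ to fill intervals and compatibility with addition to translate; you get residual finiteness from separation in a finite truncation, and strictness from the epimorphisms $a\mapsto\min\{a,k\}$ together with the identity $x^k\approx x^{k+1}$ (and triviality of $A_0$ for the first step). The differences are cosmetic: you translate by multiples of $b-k$ before filling intervals, whereas the paper first obtains $k\,\rho\,k+1$ and then translates by $1$; and the abandoned first candidate identity can simply be deleted.
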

\begin{proof}
Each $I_k$ is an additive order filter and a multiplicative ideal, so its Rees relation is a congruence. Conversely, let $\rho$ be a nonidentity congruence, and choose the least integer $k$ congruent to a larger integer $l$. Compatibility with maximum gives
\[
 k\mathrel\rho k+1\mathrel\rho\cdots\mathrel\rho l.
\]
In particular, $k\mathrel\rho k+1$. Compatibility with numerical addition then gives $k+j\mathrel\rho k+j+1$ for every $j\geq0$. Thus the integers at least $k$ form one class; the choice of $k$ makes each smaller integer a singleton class.

Distinct $a,b\in\mathbb N_0$ have distinct images in $A_k$ whenever $k>\max\{a,b\}$. Hence $\bN$ embeds in $\prod_{k\geq1}A_k$, proving residual finiteness and the join assertion. If $k\leq l$, the map $a\mapsto\min\{a,k\}$ is an epimorphism $A_l\to A_k$. For $k\geq1$, the identity $x^k\approx x^{k+1}$ holds in $A_k$ but fails in $A_{k+1}$ at $x=1$. This proves the strict inclusions.
\end{proof}

\begin{proposition}\label{embeddings}
Let $2\leq k\leq l$. There is an embedding $A_k\hookrightarrow A_l$ if and only if some integer $r\geq1$ satisfies
\begin{equation}\label{embedding-condition}
 (k-1)r<l\leq kr.
\end{equation}
In particular, $A_k$ does not embed in $A_{k+1}$ for $k\geq3$.
\end{proposition}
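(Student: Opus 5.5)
The plan is to show that any embedding $f\colon A_k\hookrightarrow A_l$ is forced to have the form $a\mapsto\min\{ar,l\}$ with $r=f(1)$, and then to read off \eqref{embedding-condition} from injectivity together with the relation $1^k=k$ in $A_k$.

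\emph{Shape of $f$.} Since $f$ preserves $+=\max$ and is injective, it is strictly increasing. The element $0$ is a multiplicative idempotent of $A_k$, so $f(0)$ is idempotent in $A_l$. By \eqref{truncations}, $\min\{2f(0),l\}=f(0)$ forces $f(0)\in\{0,l\}$. Because $k\geq2$ and $f$ is strictly increasing, $f(0)<f(1)\leq l$, hence $f(0)=0$ and $r:=f(1)\geq1$. For $1\leq a\leq k-1$ we have $a=1^a$ in $A_k$, so $f(a)=f(1)^a=\min\{ar,l\}$. Also $k=1^k$ in $A_k$, so $f(k)=\min\{kr,l\}$.

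\emph{Necessity of \eqref{embedding-condition}.} Strict monotonicity gives $f(k-1)<f(k)\leq l$. Hence $\min\{(k-1)r,l\}<l$, which says $(k-1)r<l$, and then $f(k)>f(k-1)=(k-1)r$. Independently, $k$ is absorbing and idempotent in $A_k$, so $f(k)$ is a nonzero multiplicative idempotent of $A_l$, whence $f(k)=l$. Combined with $f(k)=\min\{kr,l\}$, this yields $kr\geq l$.

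\emph{Sufficiency.} Given $r$ satisfying \eqref{embedding-condition}, define $f(a)=ar$ for $a<k$ and $f(k)=l$. This map is strictly increasing because $(k-1)r<l$, so it is injective and preserves $\max$. For products, take $a,b\in A_k$. If $a+b<k$, then $ar+br=(a+b)r\leq(k-1)r<l$, so $f(a)f(b)=(a+b)r=f(ab)$. If $a+b\geq k$, then $ab=k$ in $A_k$, and $f(a)f(b)=\min\{f(a)+f(b),l\}$. When neither factor equals $k$ this is $\min\{(a+b)r,l\}=l$, since $(a+b)r\geq kr\geq l$. When one factor equals $k$ its image is $l$, which is absorbing in $A_l$. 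In all cases the product is preserved.

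\emph{Particular case.} Take $l=k+1$. For $r=1$ the condition requires $k+1\leq k$, which is false. For $r\geq2$ it requires $(k-1)r<k+1$; since $(k-1)r\geq 2k-2$, this forces $2k-2<k+1$, that is, $k<3$. So for $k\geq3$ no $r$ exists and $A_k$ does not embed in $A_{k+1}$.

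The only step needing care is pinning down $f(0)=0$ and $f(k)=l$ through the idempotents; the rest is direct computation with \eqref{truncations}.
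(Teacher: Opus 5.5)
Your proof is correct and follows essentially the same route as the paper. In both, idempotence pins down $f(0)=0$ and $f(k)=l$, powers of $1$ give $f(j)=\min\{jr,l\}$, and injectivity plus $f(k)=l$ yields \eqref{embedding-condition}. The only differences are minor: you exclude $f(0)=l$ by strict monotonicity where the paper uses that $0$ is the multiplicative identity, and you verify the converse by cases where the paper writes a single chain of minima.
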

\begin{proof}
The multiplicatively idempotent elements of $A_l$ are $0$ and $l$. If an embedding $f$ sent $0$ to $l$, the equalities $0\cdot a=a$ would force $f(a)=l$ for every $a$. Thus $f(0)=0$. Put $r=f(1)>0$. Since each positive integer $j\leq k$ is a power of numerical $1$,
\[
 f(j)=\min\{jr,l\}\qquad(1\leq j\leq k).
\]
Also, $k$ is multiplicatively idempotent and has nonzero image, so $f(k)=l$. Injectivity is therefore equivalent to \eqref{embedding-condition}.

Conversely, under \eqref{embedding-condition}, the displayed formula and $f(0)=0$ define a strictly increasing map, which preserves maximum. For $a,b\in A_k$, numerical calculation gives
\[
 \min\{\min\{a+b,k\}r,l\}
 =\min\{(a+b)r,l\}
 =\min\{f(a)+f(b),l\}.
\]
Thus it also preserves multiplication. Finally, if $l=k+1$, the inequality $l\leq kr$ forces $r\geq2$, whereas $(k-1)r\geq2k-2\geq k+1$ for $k\geq3$.
\end{proof}

\begin{lemma}\label{positive-values}
If $\bu\approx\bu+\bq$ fails in $\bN$, then, for every integer $h\geq0$, it fails under an evaluation assigning every variable an integer greater than $h$.
\end{lemma}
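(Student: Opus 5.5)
The plan is to shift a failing evaluation by a large constant and then rescale, using that an additive subterm $\bw\in\Xc$ evaluates to a linear form in the variable values, with coefficients given by multiplicities and total degree $\len{\bw}$.

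First fix an evaluation $\alpha$ in $\bN$ where the identity fails, i.e.\ $\alpha(\bq)>\alpha(\bu_i)$ for every $i$. Write $n_x(\bw)$ for the multiplicity of $x$ in $\bw$, so that
\[
 \alpha(\bw)=\sum_{x}n_x(\bw)\alpha(x).
\]
For an integer $M\geq1$ and an integer $c\geq0$, consider the evaluation $\beta(x)=M\alpha(x)+c$. Then
\[
 \beta(\bw)=M\alpha(\bw)+c\,\len{\bw}.
\]

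Next, bound the difference between $\bq$ and each $\bu_i$ under $\beta$:
\[
 \beta(\bq)-\beta(\bu_i)=M\bigl(\alpha(\bq)-\alpha(\bu_i)\bigr)+c\bigl(\len{\bq}-\len{\bu_i}\bigr)
 \geq M-c\,\ell,
\]
where $\ell=\max_i\len{\bu_i}$. Take $c=h+1$, so that every $\beta(x)\geq h+1>h$. Then take $M>c\ell$. This gives $\beta(\bq)>\beta(\bu_i)$ for all $i$, so $\beta(\bq)>\beta(\bu)$ and the identity fails under $\beta$.

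The only point needing care is that scaling preserves the strict gaps. This holds because all values are integers, so each gap $\alpha(\bq)-\alpha(\bu_i)$ is at least $1$, and multiplying by $M$ makes it dominate the bounded length correction $c\bigl(\len{\bq}-\len{\bu_i}\bigr)$. The argument works equally when $\len{\bq}<\len{\bu_i}$, since the bound uses only $\len{\bu_i}\leq\ell$, so there is no real obstacle.
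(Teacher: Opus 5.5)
Your proposal is correct and matches the paper's proof: the paper also sets $b_j=Na_j+h+1$, writes the difference as $N\bigl(\bq(\mathbf a)-\bu_i(\mathbf a)\bigr)+(h+1)\bigl(\len{\bq}-\len{\bu_i}\bigr)$, and takes $N$ sufficiently large. Your explicit choice $M>(h+1)\max_i\len{\bu_i}$, which uses that each integer gap is at least $1$, only makes the paper's ``sufficiently large'' concrete.
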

\begin{proof}
Let $\mathbf a=(a_1,\ldots,a_n)$ witness failure, so that $\bq(\mathbf a)>\bu_i(\mathbf a)$ for every additive subterm $\bu_i$ of $\bu$. Set $b_j=Na_j+h+1$. Then
\[
 \bq(\mathbf b)-\bu_i(\mathbf b)
 =N\bigl(\bq(\mathbf a)-\bu_i(\mathbf a)\bigr)
 +(h+1)\bigl(\len{\bq}-\len{\bu_i}\bigr).
\]
For a sufficiently large integer $N$, all these differences are positive.
\end{proof}
\begin{corollary}\label{tails}
For each $h\geq0$, the subsemiring $\{h+1,h+2,\ldots\}$ of $\bN$ generates $\cM$.
\end{corollary}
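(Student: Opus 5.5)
Let $T_h=\{h+1,h+2,\ldots\}$. First I will check that $T_h$ is closed under maximum and numerical addition, so it is a subsemiring of $\bN$; hence $\V(T_h)\leq\cM$. The substance is the reverse inclusion, $\cM\leq\V(T_h)$, which I will prove by showing $\Id(T_h)\subseteq\Id(\bN)$.

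Let $\varepsilon$ be an identity that fails in $\bN$. By \eqref{one-sided}, $\varepsilon$ is equivalent to a finite set of identities of the form $\bu\approx\bu+\bq$ with $\bu_i,\bq\in\Xc$. The equivalence is relative to the commutative ai-semiring axioms, which hold in both $\bN$ and $T_h$. So some identity $\bu\approx\bu+\bq$ from this set fails in $\bN$. Lemma~\ref{positive-values} then gives an evaluation with all values greater than $h$, that is, with values in $T_h$, at which this identity fails. Hence it fails in $T_h$.

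Since the identities in this set jointly imply $\varepsilon$ in every commutative ai-semiring, $\varepsilon$ itself fails in $T_h$. Therefore $\Id(T_h)\subseteq\Id(\bN)$, which gives $\cM=\V(\bN)\leq\V(T_h)$, and so $\V(T_h)=\cM$.

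I do not expect a real obstacle. The one point needing care is the reduction in the second paragraph. If a general identity $\varepsilon$ fails in $\bN$, then some one-sided identity equivalent to part of $\varepsilon$ fails in $\bN$. Lemma~\ref{positive-values} is stated only for such one-sided identities, so the reduction is what allows it to apply. The reduction is valid because the equivalence \eqref{one-sided} holds in every ai-semiring.
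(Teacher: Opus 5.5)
Your argument is correct and is exactly the paper's proof: you reduce via \eqref{one-sided} to identities $\bu\approx\bu+\bq$, and then Lemma~\ref{positive-values} moves a failing evaluation into $\{h+1,h+2,\ldots\}$. One small slip in your last step: to conclude that $\varepsilon$ fails in $T_h$, you need the other half of the equivalence, namely that $\varepsilon$ implies each one-sided identity (so if $\varepsilon$ held in $T_h$, all of them would), not that they jointly imply $\varepsilon$.
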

\begin{proof}
By \eqref{one-sided} and Lemma~\ref{positive-values}, every identity failing in $\bN$ also fails in this subsemiring.
\end{proof}

We recall the exponent-vector description of max-plus identities from~\cite[Section~3]{AEI}. For $\bw\in\Xc$ over $x_1,\ldots,x_n$, write
$\alpha(\bw)\in\mathbb N_0^n\setminus\{\mathbf0\}$ for its vector of multiplicities. If $\bu=\bu_1+\cdots+\bu_m$, then
\begin{equation}\label{convex}
 \bN\models\bu\approx\bu+\bq
 \quad\Longleftrightarrow\quad
 \alpha(\bq)\leq\sum_{i=1}^m\lambda_i\alpha(\bu_i)
 \text{ for some }\lambda_i\geq0,\quad\sum_{i=1}^m\lambda_i=1,
\end{equation}
where the inequality is coordinatewise. For a finite nonempty set $U\subseteq\mathbb N_0^n\setminus\{\mathbf0\}$, define
\[
 C(U)=\left\{\mathbf b\in\mathbb N_0^n\setminus\{\mathbf0\}:
 \mathbf b\leq\sum_{\mathbf a\in U}\lambda_{\mathbf a}\mathbf a,
 \quad\lambda_{\mathbf a}\geq0,\quad\sum_{\mathbf a\in U}\lambda_{\mathbf a}=1\right\}.
\]
In the constant-free signature, the corresponding relatively free algebra has the sets $C(U)$ as its elements, with
\[
 C(U)\oplus C(V)=C(U\cup V),\qquad
 C(U)\odot C(V)=C(U+V),
\]
where $U+V=\{\mathbf a+\mathbf b:\mathbf a\in U,\mathbf b\in V\}$. Its generators are $C(\{\mathbf e_i\})$, where $\mathbf e_i$ are the coordinate vectors. Indeed, \eqref{convex} identifies two terms precisely when their exponent sets have the same $C$-closure; term addition and multiplication correspond to union and the set of pairwise sums. We use \eqref{convex} in Section~\ref{powers}.

\begin{theorem}\label{proper-lf}
Every proper subvariety $\cV<\cM$ satisfies $x^s\approx x^{s+1}$ for some $s\geq1$. In particular, every proper subvariety of $\cM$ is locally finite.
\end{theorem}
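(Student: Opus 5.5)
The plan is to turn one identity that separates $\cV$ from $\cM$ into a one-variable identity by substituting powers of a single variable. The positive-value evaluation of Lemma~\ref{positive-values} supplies the exponents. Local finiteness then follows by a counting argument.

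Since $\cV<\cM$, some identity of $\cV$ fails in $\bN$. By \eqref{one-sided} we may take it of the form $\bu\approx\bu+\bq$ with $\bu=\bu_1+\cdots+\bu_m$ and $\bu_i,\bq\in\Xc$, over variables $y_1,\ldots,y_n$. Lemma~\ref{positive-values} with $h=0$ gives a failing evaluation $\mathbf b=(b_1,\ldots,b_n)$ with every $b_j\geq1$. Put $M=\max_i\bu_i(\mathbf b)$ and $Q=\bq(\mathbf b)$, where the evaluations are numerical sums. Failure means $Q>M$, and positivity gives $M\geq1$.

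Now apply in $\cV$ the substitution $y_j\mapsto x^{b_j}$. Each $\bu_i$ becomes $x^{\bu_i(\mathbf b)}$ and $\bq$ becomes $x^{Q}$. This gives
\[
 \sum_{i=1}^m x^{\bu_i(\mathbf b)}\approx\sum_{i=1}^m x^{\bu_i(\mathbf b)}+x^Q .
\]
Because $\cV\leq\cM\leq\cA$, identity \eqref{factor} holds in $\cV$. Hence $x^M$ absorbs every $x^k$ with $k\leq M$, and the left side collapses to $x^M$. We obtain $x^M\approx x^M+x^Q$, that is, $x^Q\preceq x^M$.

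On the other hand, $Q\geq M+1>M$, so \eqref{factor} gives $x^M\preceq x^{M+1}\preceq x^Q$. The relations $x^M\preceq x^{M+1}\preceq x^Q\preceq x^M$ and antisymmetry of the additive order, in the form $a+b=b$ and $b+a=a$ imply $a=b$, yield $x^M\approx x^{M+1}$. Take $s=M\geq1$.

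For local finiteness, multiplying $x^s\approx x^{s+1}$ by arbitrary factors shows the following. In $\cV$, every product over $z_1,\ldots,z_n$ is equivalent to one in which each exponent is at most $s$. There are therefore only finitely many products up to equivalence, namely at most $(s+1)^n$. By additive idempotence, only finitely many sums of them remain. So $F_{\cV}(z_1,\ldots,z_n)$ is finite, and $\cV$ is locally finite.

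The only delicate point is ensuring $M\geq1$, so that $s\geq1$ and the collapse via \eqref{factor} is legitimate. This is exactly what the positive-value form of Lemma~\ref{positive-values} provides. Without it, a failing evaluation could have all $\bu_i(\mathbf b)=0$, and substituting $x^0$ is not available in the constant-free signature.
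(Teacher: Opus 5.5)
Your proof is correct and is essentially the paper's argument: you take a failing evaluation with all values positive from Lemma~\ref{positive-values}, use the substitution $y_j\mapsto x^{b_j}$ and \eqref{factor} to collapse the identity to $x^M\approx x^M+x^Q$, apply the sandwich $x^M\preceq x^{M+1}\preceq x^Q\preceq x^M$, and finish with the same exponent count for local finiteness. One small correction to your closing remark: what positivity really buys is that every $b_j\geq1$, so the substitution is defined at all, and $M\geq1$ then follows automatically.
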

\begin{proof}
Choose an identity $\bu\approx\bu+\bq$ that holds in $\cV$ and fails in $\bN$. By Lemma~\ref{positive-values}, choose a failure evaluation $x_j\mapsto a_j$ with all $a_j>0$. Put
\[
 s=\max_i\bu_i(a_1,\ldots,a_n),\qquad
 t=\bq(a_1,\ldots,a_n).
\]
Then $1\leq s<t$. Substitute $x_j\mapsto x^{a_j}$ in the identity. By \eqref{A}, the image of $\bu$ is equivalent to $x^s$, and the resulting identity gives $x^s\approx x^t$. Since \eqref{A} also gives $x^s\preceq x^{s+1}\preceq x^t$,
\[
 x^{s+1}\approx x^s+x^{s+1}
 \approx x^t+x^{s+1}\approx x^t\approx x^s.
\]
For terms over $n$ variables, each positive exponent can now be reduced to at most $s$. There are at most $(s+1)^n-1$ resulting additive subterms and only finitely many nonempty sums of them. The relatively free algebra of each finite rank is therefore finite.
\end{proof}
\begin{corollary}\label{M-not-lf}
The variety $\cM$ is not locally finite and is not generated by a finite algebra.
\end{corollary}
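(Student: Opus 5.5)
The corollary is obtained directly from the results already established.

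\emph{$\cM$ is not locally finite.} We show that the one-generated relatively free algebra $F_{\cM}(x)$ is infinite. The monomials $x,x^2,x^3,\ldots$ are pairwise distinct in $F_{\cM}(x)$. Indeed, if $x^s\approx x^t$ held in $\bN$ for some $s<t$, then evaluating $x\mapsto 1$ would give $s=t$, a contradiction. So $F_{\cM}(x)$ is infinite and $\cM$ is not locally finite.

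\emph{$\cM$ is not generated by a finite algebra.} By the standard fact that a variety generated by a finite algebra is locally finite (see \cite{BS}), this follows from the first part. If $\cM=\V(S)$ with $S$ finite, then $F_{\cM}(n)$ embeds in $S^{S^n}$ and is therefore finite for every $n$, contradicting what we just proved.

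An alternative route avoids the general fact and uses Theorem~\ref{proper-lf} together with residual finiteness. If $\cM=\V(S)$ with $|S|=n$, then $S$ satisfies $x^{s}\approx x^{s+1}$ for some $s\le n$. This holds because the powers of any element of a finite semigroup eventually cycle, and in the presence of \eqref{A} the additive order forces $a^s\le a^{s+1}\le\cdots$. The cycle therefore collapses, so $a^{s}=a^{s+1}$ once $s$ is at least the index. But then $\cM$ itself would satisfy $x^s\approx x^{s+1}$, which fails in $\bN$ at $x=1$.

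Neither argument has a real obstacle. The only point needing care in the alternative route is the collapse of a cycle of powers to a fixed point via the monotone chain $a^s\le a^{s+1}$. The first argument sidesteps this entirely, so I would present that one.
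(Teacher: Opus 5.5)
Your proof is correct and is essentially the paper's argument: both exhibit an infinite one-generated member of $\cM$ (you via the pairwise distinct powers $x^k$ in $F_{\cM}(x)$, separated by evaluating at $1$; the paper via the subsemiring of $\bN$ generated by $1$, which is the same fact), and then invoke the standard fact that a variety generated by a finite algebra is locally finite. Your alternative route is also valid, although it uses neither Theorem~\ref{proper-lf} nor residual finiteness: it needs only that \eqref{A} makes the powers of an element increase in the additive order, so a cycle of powers collapses to a single element.
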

\begin{proof}
The subsemiring of $\bN$ generated by numerical $1$ contains every positive integer. A variety generated by a finite algebra is locally finite, since its relatively free algebra of finite rank is a finite algebra of term functions.
\end{proof}

\section{Finite semirings with a multiplicative identity}\label{finite}
\begin{theorem}\label{unit-fb}
Let $S$ be a finite commutative ai-semiring satisfying \eqref{A} and possessing a multiplicative identity. Then $S$ is finitely based in the signature $(2,2)$. If $|S|=d+1>1$, a basis can be chosen whose identities, apart from the commutative ai-semiring axioms, involve at most $d+1$ variables.
\end{theorem}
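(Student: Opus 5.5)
The plan is to compare $\V(S)$ with the variety $\cW$ defined by the commutative ai-semiring axioms, \eqref{A}, and all identities of $S$ in at most $d+1$ variables, and to prove $\cW=\V(S)$. Once this equality holds, finite basedness follows by local finiteness, in the same way as in Theorem~\ref{K-finite-lattice}.

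\textbf{Step 1: the identity is the least element, and powers stabilise.} Write $e$ for the multiplicative identity. Substituting $x\mapsto e$ in \eqref{A} gives $y+e=y$, so $e$ is the least element of $S$. By \eqref{A}, every $a\in S$ satisfies $a\le a^2\le a^3\le\cdots$. If $a\neq e$, all these powers lie in the $d$-element set $S\setminus\{e\}$. Hence the chain repeats by step $d$, and once $a^k=a^{k+1}$ it is constant from then on. Therefore $S\models x^d\approx x^{d+1}$, an identity in one variable, so it belongs to $\cW$.

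\textbf{Step 2: finiteness of the basis.} The argument of Theorem~\ref{proper-lf} shows that $\cW$ is locally finite. Fix $Z=\{z_1,\dots,z_{d+1}\}$; then $F_{\cW}(Z)$ is finite. Choosing representatives as in Theorem~\ref{K-finite-lattice}, the identities of $S$ in the variables $Z$ reduce to finitely many. So $\cW$ is finitely based by identities in at most $d+1$ variables, together with \eqref{A} and the commutative ai-semiring axioms. It remains only to show that $\cW\models\Id(S)$.

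\textbf{Step 3: $\cW\models\Id(S)$ (main obstacle).} Let $\bu\approx\bu+\bq$ hold in $S$, in variables $x_1,\dots,x_n$ with $n>d+1$. A failing evaluation in $S$ would take at most $d+1$ values. The value $e$ corresponds syntactically to deleting a variable: its factors are removed from every additive subterm, and additive subterms that become empty are dropped, since $e$ is least. Each remaining value class corresponds to a block of variables.

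I would first try to show that $\bu\approx\bu+\bq$ is derivable in $\cW$ by induction on $n$. The idea is to choose two variables $x,y$ and use the identity with $x,y$ merged, namely with $y\mapsto x$. This has $n-1$ variables, so by induction it holds in $\cW$. One then has to recover the original identity from it using \eqref{A}, the power stabilisation, and the identities of $S$ that handle the deletion of one variable.

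The delicate point is the direction of the substitutions. Merging is a substitution, so the merged identity is a consequence of the original one, not the reverse. Splitting the merged variable back requires substituting $\ellD{\{x,y\}}$ for $x$, and this creates mixed monomials that may not be absorbed by $\bu$.

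If the induction stalls, my fallback is semantic. I would prove that every finitely generated member of $\cW$, equivalently every finite subdirectly irreducible member, lies in $\V(S)$. For this I would use the fact that an $n$-generated subalgebra is determined by its $(d+1)$-generated pieces, via the separation of points by homomorphisms into $S$. This last step is where I expect most of the work to be.
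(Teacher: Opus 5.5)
Your Steps~1 and~2 are sound, but there is a genuine gap: Step~3 is the entire content of the theorem, and you do not prove it. The merging induction runs in the wrong direction, as you observe yourself, and the semantic fallback only restates $\cW=\V(S)$. As formulated, the fallback also never uses the multiplicative identity, and in that generality it is false. $\TR$ is a finite commutative ai-semiring satisfying \eqref{A}, since $\TR\in\cM\leq\cA$, and by finiteness it satisfies some $x^k\approx x^{k+1}$. So your Step~2 makes the corresponding $\cW$ finitely based. If every finitely generated member of that $\cW$ lay in $\V(\TR)$, then $\cW=\V(\TR)$ would be finitely based, which it is not. So $e$ must enter Step~3 essentially, and you have not said how.

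The missing argument has three parts, and it is the one the paper uses.

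\emph{Length reduction.} Extend Step~1 from powers to arbitrary products. For $a_1,\ldots,a_{d+1}\in S$, the chain $e\leq a_1\leq a_1a_2\leq\cdots\leq a_1\cdots a_{d+1}$ has $d+2$ entries, so two adjacent entries coincide. Multiplying by the remaining factors shows that one factor can be deleted without changing the full product. Combined with \eqref{A}, this gives the $(d+1)$-variable identity $x_1\cdots x_{d+1}\approx\sum_{i=1}^{d+1}x_1\cdots x_{i-1}x_{i+1}\cdots x_{d+1}$ of $S$. It lies in your $\cW$ and rewrites $\bq$ as a sum of products of length at most $d$. Using \eqref{one-sided}, it therefore suffices to derive $\bu\approx\bu+\bq$ when $\len{\bq}\leq d$.

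\emph{Content.} You need $\ct{\bq}\subseteq\ct{\bu}$. Otherwise, evaluate a variable of $\bq$ absent from $\bu$ at some $a\neq e$ and all other variables at $e$; the identity then fails in $S$.

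\emph{Deletion.} Delete from each additive subterm of $\bu$ the variables outside $\ct{\bq}$, dropping products that become empty. By the content condition this leaves a nonempty term $\bv$. Evaluating the deleted variables at the least element $e$ shows $S\models\bv\approx\bv+\bq$. This identity involves at most $d$ variables, so it holds in $\cW$. Each additive subterm of $\bv$ is a factor of one of $\bu$, so \eqref{factor} gives $\bu\approx\bu+\bv\approx\bu+\bv+\bq\approx\bu+\bq$.

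This settles your concern about the direction of substitutions: passing to fewer variables is done by absorption through \eqref{A}, not by a substitution. With Step~3 done this way, your Step~2 completes the proof. The paper does not need local finiteness at all: after length reduction, the identities in $d$ variables with additive subterms of length at most $d$ already form a finite set.
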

\begin{proof}
The one-element case is immediate. Suppose $|S|=d+1>1$, and let $e$ be its multiplicative identity. For $a\in S$, identity~\eqref{A} gives $e+a=e+ea=ea=a$. Thus $e$ is the least element of the additive order.

We first prove that $S$ satisfies
\begin{equation}\label{length-reduction}
 x_1\cdots x_{d+1}\approx
 \sum_{i=1}^{d+1}x_1\cdots x_{i-1}x_{i+1}\cdots x_{d+1}.
\end{equation}
For arbitrary $a_1,\ldots,a_{d+1}\in S$, consider
\[
 e\leq a_1\leq a_1a_2\leq\cdots\leq a_1\cdots a_{d+1}.
\]
There are $d+2$ entries and only $d+1$ elements of $S$, so two adjacent entries coincide. Multiplication by the remaining factors shows that deleting the corresponding factor does not change the full product. Each product obtained by deleting a factor is at most the full product, by \eqref{A}. Their sum is therefore the full product, proving \eqref{length-reduction}.

Repeated use of \eqref{length-reduction} reduces every term to a sum of additive subterms of length at most $d$. Fix $d$ variables, and let $R_d$ consist of all such terms on these variables, modulo commutativity and additive idempotence. This set is finite. Let $\Gamma$ be the set of all identities between members of $R_d$ that hold in $S$.

We prove that the commutative ai-semiring axioms, \eqref{A}, \eqref{length-reduction}, and $\Gamma$ form a basis for $S$. By the length reduction and \eqref{one-sided}, it suffices to derive $\bu\approx\bu+\bq$ when every additive subterm involved has length at most $d$.

First, $\ct{\bq}\subseteq\ct{\bu}$. Otherwise assign an element $a\neq e$ to a variable of $\bq$ absent from $\bu$, and assign $e$ to every other variable. The value of $\bu$ is $e$, whereas that of $\bq$ is $a^r\geq a>e$, a contradiction.

Delete from each additive subterm of $\bu$ the variables outside $\ct{\bq}$, and omit any additive subterm all of whose variables are deleted. Denote the resulting nonempty term by $\bv$. Evaluating the deleted variables at $e$ in $S$ shows that $S\models\bv\approx\bv+\bq$. This identity involves at most $\len{\bq}\leq d$ variables, and its additive subterms have length at most $d$. After renaming variables, it belongs to $\Gamma$. Each additive subterm of $\bv$ is a nonempty factor of an additive subterm of $\bu$. Hence
\[
 \bu\approx\bu+\bv\approx\bu+\bv+\bq\approx\bu+\bq,
\]
by \eqref{factor} and $\Gamma$. This proves completeness and the stated variable bound.
\end{proof}
\begin{corollary}\label{Ak-fb}
Every finite truncation $A_k$ is finitely based.
\end{corollary}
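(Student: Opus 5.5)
The plan is to apply Theorem~\ref{unit-fb} directly to $A_k$. That theorem requires three hypotheses: $A_k$ is a finite commutative ai-semiring, it satisfies \eqref{A}, and it has a multiplicative identity. I will check each and then invoke the theorem.

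For $k=0$ the algebra $A_0$ is trivial, and the trivial algebra is finitely based, for instance by $x\approx y$. So assume $k\geq1$.

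The algebra $A_k$ is the Rees quotient $\bN/I_k$ with universe $\{0,1,\ldots,k\}$ and operations \eqref{truncations}. It is a homomorphic image of $\bN$, so it lies in $\cM$. It is therefore a commutative ai-semiring, since addition is $\max$ and multiplication is commutative. It satisfies \eqref{A} because \eqref{A} holds in $\bN$ and $\cM\leq\cA$. Alternatively, one checks directly that $\max\{\min\{a+b,k\},a\}=\min\{a+b,k\}$, because $a\leq k$.

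Numerical $0$ is a multiplicative identity, as already noted after \eqref{truncations}: $0\cdot a=\min\{a,k\}=a$. Finiteness is clear, with $|A_k|=k+1$.

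Theorem~\ref{unit-fb} then gives a finite basis. Its identities, apart from the commutative ai-semiring axioms, involve at most $k+1$ variables.

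There is no real obstacle here. The only step needing care is confirming that the Rees quotient is well defined as a semiring. This is covered by Proposition~\ref{congruences}, which shows the Rees relation of $I_k$ is a congruence.
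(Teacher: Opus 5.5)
Your proposal is correct and matches the paper's proof: apply Theorem~\ref{unit-fb} with numerical $0$ as the multiplicative identity of $A_k$, and treat $k=0$ as trivial. Your checks of \eqref{A} and of well-definedness via Proposition~\ref{congruences} are fine. The variable bound $k+1$ is also correct, though the separate $k=0$ case is not really needed, since Theorem~\ref{unit-fb} already covers one-element algebras.
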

\begin{proof}
Apply Theorem~\ref{unit-fb} to the multiplicative identity $0$ of $A_k$; the case $k=0$ is trivial.
\end{proof}
The case $k=2$ has the following short basis, established in~\cite[Proposition~7]{Zhao} and used in~\cite[Theorem~1.2]{SRG}: relative to the commutative ai-semiring axioms, $\V(A_2)$ is defined by \eqref{A} and
\begin{align}
 xy+y^2&\approx x+y^2,\label{A2-basis1}\\
 xyz&\approx xy+yz+xz.\label{A2-basis2}
\end{align}
We shall use this basis in Section~\ref{last}.

For a commutative ai-semiring $S$ satisfying \eqref{A}, adjoin a new element $e$ and retain the operations on $S$, with
\[
 e+a=a+e=a,\qquad ea=ae=a\quad(a\in S),\qquad e+e=ee=e.
\]
Denote the resulting algebra by $S^1$.
\begin{proposition}\label{adjoin}
The algebra $S^1$ is a commutative ai-semiring satisfying \eqref{A}. If $S\in\cM$, then $S^1\in\cM$.
\end{proposition}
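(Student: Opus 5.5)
The first assertion is a direct verification, and the second is the real content.

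\emph{Semiring axioms and \eqref{A}.} Addition on $S^1$ is commutative and idempotent, with $e$ as an additive identity. It is associative because $e$ is neutral and $S$ is closed under $+$. Multiplication is commutative and associative for the same reason: $e$ is a multiplicative identity and $S$ is a subsemigroup.

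For distributivity $a(b+c)=ab+ac$, I check the cases in which some of $a,b,c$ equal $e$.
\begin{enumerate}
\item If $a=e$, both sides are $b+c$.
\item If $b=e$ and $a,c\in S$, the left side is $a(e+c)=ac$, while the right side is $a+ac$. These agree by \eqref{A} in $S$.
\item If $b=c=e$, both sides equal $a$.
\end{enumerate}
So distributivity needs exactly \eqref{A} in $S$. Identity \eqref{A}, $xy+x\approx xy$, then holds in $S^1$: it holds in $S$, and if $x=e$ or $y=e$ it reduces to $x+e=x$ or $y+e=y$ type equalities. All of this is routine.

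\emph{Membership in $\cM$.} Here I use the reduction \eqref{one-sided}. I must show that every identity $\bu\approx\bu+\bq$ holding in $\bN$ holds in $S^1$.

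Take any evaluation into $S^1$, and let $E$ be the set of variables sent to $e$. Since $e$ is a multiplicative identity, each additive subterm evaluates as the product of its variables outside $E$. An additive subterm with all variables in $E$ evaluates to $e$, and $e$ is additively neutral.

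Let $\bu'$ be the term obtained by deleting the variables of $E$ from each $\bu_i$ and dropping the $\bu_i$ that become empty. Define $\bq'$ from $\bq$ the same way. Then the evaluation of $\bu$ agrees with that of $\bu'$ on $S$, and likewise for $\bq$ and $\bq'$ (with $e$ in the empty case).

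The key observation is that $\bN\models\bu'\approx\bu'+\bq'$ whenever $\bu',\bq'$ are nonempty. It follows by evaluating the variables of $E$ at $0$ in $\bN$, since $0$ is the multiplicative identity of $\bN$. Because $S\in\cM$, this identity then holds in $S$.

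\emph{Degenerate cases.} These are the main obstacle.
\begin{enumerate}
\item If $\bq'$ is empty, the value of $\bq$ is $e$. Then $\bu+e=\bu$ trivially.
\item If $\bu'$ is empty but $\bq'$ is not, then $\bq$ contains a variable outside $\ct{\bu}$. This contradicts \eqref{content-N}, which holds for every identity of $\bN$.
\end{enumerate}
Hence the identity holds under every evaluation, and $S^1\in\cM$.
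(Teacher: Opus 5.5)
Your proposal is correct and follows essentially the same route as the paper's proof. Both check the semiring axioms directly, with distributivity in the mixed case needing exactly \eqref{A}. Both then delete the variables sent to $e$ and transfer the reduced identity from $\bN$ to $S$ by evaluating those variables at $0$, using \eqref{content-N} to rule out the degenerate case in which $\bu$ reduces to nothing while $\bq$ does not.
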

\begin{proof}
The extended operations are associative and commutative. For distributivity, the new mixed case is
\[
 (e+a)b=ab=b+ab=eb+ab\qquad(a,b\in S),
\]
where the middle equality is \eqref{A}. Cases with multiplier $e$ follow from its definition as an identity. Identity~\eqref{A} is also preserved.

Let $S\in\cM$ and let $\bu\approx\bu+\bq$ hold in $\bN$. By \eqref{content-N}, $\ct{\bq}\subseteq\ct{\bu}$. Fix an evaluation in $S^1$ and let $Y$ be the set of variables assigned values in $S$. If $\bq$ has value $e$, the identity follows because $e$ is least. Otherwise $\ct{\bq}\cap Y\neq\varnothing$.

Delete the variables outside $Y$ from all additive subterms of $\bu$ and from $\bq$, omitting products that become empty. Let the resulting terms be $\bv$ and $\br$. Both are nonempty. Evaluation of the deleted variables at numerical $0$ in $\bN$ proves $\bN\models\bv\approx\bv+\br$, so the identity holds in $S$. Under the fixed evaluation, $\bu$ and $\bq$ in $S^1$ have the values of $\bv$ and $\br$ in $S$. Hence $\bu\approx\bu+\bq$ holds in $S^1$. Now apply \eqref{one-sided}.
\end{proof}
\begin{corollary}\label{finite-embedding}
Every finite member of $\cM$ embeds in a finite, finitely based member of $\cM$.
\end{corollary}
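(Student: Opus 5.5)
Given a finite $S\in\cM$, I take the finite algebra $S^1$ of Proposition~\ref{adjoin}. I must check three things. First, $S^1$ is a finite commutative ai-semiring satisfying \eqref{A}. Second, it lies in $\cM$. Third, $e$ is a multiplicative identity, so Theorem~\ref{unit-fb} applies and shows that $S^1$ is finitely based. The embedding is the inclusion $S\hookrightarrow S^1$, since the operations on $S$ are retained.

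The hypothesis of Theorem~\ref{unit-fb} that $S^1$ satisfies \eqref{A} requires $S$ itself to satisfy \eqref{A}. This holds because $S\in\cM\leq\cA$, and \eqref{A} holds in $\bN$. Proposition~\ref{adjoin} then gives that $S^1$ satisfies \eqref{A} and, since $S\in\cM$, that $S^1\in\cM$.

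There is a degenerate case. If $S$ is empty it is not an algebra in the usual sense, so I take $S$ nonempty. The trivial algebra gives $S^1$ with two elements, which is still covered by Theorem~\ref{unit-fb}.

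Nothing here is a real obstacle. The only point requiring care is verifying the hypotheses of Proposition~\ref{adjoin}, and in particular that \eqref{A} passes from $\bN$ to every member of $\cM$. That holds simply because identities of $\bN$ hold throughout $\V(\bN)$.
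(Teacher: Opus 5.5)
Your proposal is correct and follows the paper's proof exactly. You embed $S$ in $S^1$, use Proposition~\ref{adjoin} (with \eqref{A} inherited from $\bN$) to get $S^1\in\cM$ satisfying \eqref{A}, and then apply Theorem~\ref{unit-fb} to the finite algebra $S^1$ with multiplicative identity $e$.
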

\begin{proof}
For finite $S\in\cM$, use $S\subseteq S^1$, Proposition~\ref{adjoin}, and Theorem~\ref{unit-fb}.
\end{proof}

\section{An interval of nonfinitely based varieties}\label{interval}
For $k\geq2$, define
\begin{equation}\label{W-D}
 \cW_k=\cM\cap[x^k+z\approx x^k],\qquad
 \cD=\cM\cap[x^3+z\approx x^3,\ x^2y^2+z\approx x^2y^2].
\end{equation}
An identity $\bw+z\approx\bw$ with $z\notin\ct{\bw}$ means that every value of $\bw$ is an additive greatest element. In a semiring satisfying \eqref{A}, such an element $b$ is multiplicatively absorbing: $b\leq ba\leq b$ gives $ba=b$.

For $n\geq3$, put
\begin{equation}\label{aei-terms}
 \bp_n=x_1\cdots x_n,\qquad
 \bv_n=\sum_{i=1}^n x_i^2
           \prod_{\substack{1\leq j\leq n\\j\notin\{i,i+1\}}}x_j,
\end{equation}
where $i+1$ is interpreted modulo $n$. Write $\varepsilon_n$ for $\bv_n\approx\bv_n+\bp_n$. These identities hold in $\bN$: if $x_i$ has value $a_i$ and $s=\sum_i a_i$, then $\bp_n$ has value $s$, and the $i$th additive subterm of $\bv_n$ has value $s+a_i-a_{i+1}$. At least one of the latter values is at least $s$.

The proof of~\cite[Theorems~4.1 and~4.2]{AEI} gives the following property:
\begin{equation}\label{AEI-independent}
 \text{no finite subset of }\Id(\bN)\text{ implies all }\varepsilon_n,
 \qquad n\geq3.
\end{equation}
Its applicability in the constant-free signature is recorded in~\cite[Section~4.1]{RJZL}.

\begin{lemma}\label{no-first-use}
Let $\Sigma\subseteq\Id(\bN)$. If
\[
 \Sigma\cup\{x^3+z\approx x^3,\ x^2y^2+z\approx x^2y^2\}
 \vdash\varepsilon_n,
\]
then $\Sigma\vdash\varepsilon_n$.
\end{lemma}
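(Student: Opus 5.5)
Since $\varepsilon_n$ asserts $\bv_n\approx\bv_n+\bp_n$, we work with derivations in substitution form. Suppose a derivation of $\varepsilon_n$ from $\Sigma\cup\{x^3+z\approx x^3,\ x^2y^2+z\approx x^2y^2\}$ uses one of the two extra identities. The idea is to look at the \emph{first} step at which such an identity is applied, since every earlier term is $\Sigma$-provably equal to $\bv_n$. Before that step, each term $\bt$ in the chain satisfies $\Sigma\vdash\bv_n\approx\bt$. The first use of $x^3+z\approx x^3$ (or of $x^2y^2+z\approx x^2y^2$) then requires $\bt$ to contain an additive subterm of the form $\varphi(x^3)$ or $\varphi(x^2y^2)$, possibly inside a larger product via the context $\bs\cdot(\,\cdot\,)+\br$. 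So $\bt$ has an additive subterm $\bw$ with some variable $a$ occurring at least three times, or with two variables (or one variable four times) each occurring at least twice. Identities in $\Id(\bN)$ preserve the convex-hull description \eqref{convex}, and this gives the contradiction.

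Concretely, since $\Sigma\subseteq\Id(\bN)$, the identity $\bv_n\approx\bv_n+\bw$ holds in $\bN$. By \eqref{convex}, $\alpha(\bw)$ is then dominated by a convex combination of the exponent vectors of the additive subterms of $\bv_n$. Each such vector is a $0/1$ vector with a single entry $2$ at $i$ and a $0$ at $i+1$, so every coordinate of a convex combination is at most $2$. This already excludes $\varphi(x^3)$, since a cube gives a coordinate $\geq3$. For $x^2y^2$ with distinct variables $a,b$ squared in $\bw$, we need coordinates $a$ and $b$ both at least $2$ in the combination. Coordinate $a$ equals $2$ only when all weight sits on the subterm with square at $a$, and that subterm has $b$-coordinate at most $1$. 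So the combination cannot reach $2$ in both coordinates. If $x$ and $y$ map so that one variable appears four times, the cube argument applies again.

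Hence the first-use step cannot exist. The derivation therefore never uses the two extra identities before reaching $\bv_n+\bp_n$; more precisely, if the first use never happens, the derivation is entirely a $\Sigma$-derivation, and $\Sigma\vdash\varepsilon_n$.

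The main obstacle is the bookkeeping of derivations. Rewriting with $\bu\approx\bu+\bq$ forms can also apply in the reverse direction, and a use of $x^3+z\approx x^3$ from right to left requires $\varphi(x^3)$ to appear as a subterm in the same way. I would handle this by noting that either orientation needs an additive subterm $\bw$ of the current term that is a multiple of $\varphi(x^3)$ or $\varphi(x^2y^2)$ in the free algebra $P_f(\Xc)$. I would also check that subterms of terms $\Sigma$-equivalent to $\bv_n$ are absorbed by $\bv_n$ in $\bN$, which follows since $\bt\approx\bt+\bw$ holds trivially and $\bN\models\bv_n\approx\bt$.
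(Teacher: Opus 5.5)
Your proof is correct and follows essentially the paper's route: a first-use argument on a substitution-form derivation starting at $\bv_n$. The key facts are that no term $\bN$-equivalent to $\bv_n$ has an additive subterm with a variable occurring three times, or with two distinct variables each occurring twice, while an instance of either side of the added identities, in any context, forces such a subterm. The only difference is cosmetic: you exclude these subterms through the convex description \eqref{convex}, whereas the paper uses direct evaluations assigning $1$ to one or two variables and $0$ to the others.
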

\begin{proof}
Suppose $\bN\models\bt\approx\bv_n$. No additive subterm of $\bt$ contains a variable three times: assigning $1$ to that variable and $0$ to all others would give $\bt\geq3$ and $\bv_n\leq2$. Nor can an additive subterm contain two distinct variables each at least twice: assigning $1$ to those two variables and $0$ elsewhere would give $\bt\geq4$ and $\bv_n\leq3$.

Consider a derivation starting from $\bv_n$. Before the first use of either added identity, the current term is equal to $\bv_n$ in $\bN$, and therefore has the two stated properties. An instance of either side of $x^3+z\approx x^3$ contains $\varphi(x)^3$. Choosing an additive subterm $\ba$ of $\varphi(x)$ gives the additive subterm $\ba^3$, in which some variable occurs at least three times.

Similarly, an instance of either side of $x^2y^2+z\approx x^2y^2$ contains $\varphi(x)^2\varphi(y)^2$. Choose additive subterms $\ba$ and $\bb$ of $\varphi(x)$ and $\varphi(y)$. Its expansion contains $\ba^2\bb^2$. If $\ct{\ba}\cap\ct{\bb}\neq\varnothing$, some variable occurs at least four times. Otherwise two distinct variables each occur at least twice.

An additive context retains the indicated additive subterm; multiplication by a nonempty term retains a product having it as a factor. Thus the term before the first application would violate one of the two properties. No such first application exists, and the derivation uses only $\Sigma$.
\end{proof}

\begin{theorem}\label{NFB-interval}
Every variety $\cV$ with $\cD\leq\cV\leq\cM$ is nonfinitely based.
\end{theorem}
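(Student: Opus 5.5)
Suppose, for contradiction, that $\cV$ is finitely based, with $\cD\leq\cV\leq\cM$. Since $\cV\leq\cM$, every identity of $\cV$ is implied by $\Id(\bN)$ together with some set of identities of $\cV$. The plan is to express $\cV$ relative to $\cM$ and then use compactness together with Lemma~\ref{no-first-use} and \eqref{AEI-independent}. In detail, a finite basis $\Gamma$ of $\cV$ consists of identities that need not hold in $\bN$, so it cannot be fed directly into Lemma~\ref{no-first-use}. We therefore first show that each $\varepsilon_n$ holds in $\cV$ (trivially, since $\varepsilon_n\in\Id(\bN)$ and $\cV\leq\cM$). The main task is then to show that membership of $\varepsilon_n$ in the equational theory of $\cV$ is always "witnessed" by $\Id(\bN)$ together with the two absorbing identities of $\cD$.

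The key step uses $\cD\leq\cV$. Every identity of $\cV$ holds in $\cD=\cM\cap[\Delta]$, where $\Delta=\{x^3+z\approx x^3,\ x^2y^2+z\approx x^2y^2\}$. Hence each $\gamma\in\Gamma$ is derivable from $\Id(\bN)\cup\Delta$. By compactness, each $\gamma$ follows from a finite $\Sigma_\gamma\subseteq\Id(\bN)$ together with $\Delta$. Let $\Sigma_0$ be the union of the $\Sigma_\gamma$ over $\gamma\in\Gamma$; this is a finite subset of $\Id(\bN)$. Then $\Gamma$ is derivable from $\Sigma_0\cup\Delta$, and $\Gamma\vdash\varepsilon_n$ for all $n$, so $\Sigma_0\cup\Delta\vdash\varepsilon_n$ for every $n\geq3$. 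Lemma~\ref{no-first-use}, applied with $\Sigma=\Sigma_0$, gives $\Sigma_0\vdash\varepsilon_n$ for all $n$. This means a finite subset of $\Id(\bN)$ implies all $\varepsilon_n$, contradicting \eqref{AEI-independent}.

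The only point needing care is the passage "$\cV$ finitely based $\Rightarrow$ finite basis derivable from $\Id(\bN)\cup\Delta$". This rests only on $\Id(\cV)\subseteq\Id(\cD)$ and the fact that $\Id(\cD)$ is the deductive closure of $\Id(\bN)\cup\Delta$. Both are immediate from the definition of $\cD$ in \eqref{W-D} as $\cM\cap[\Delta]$. Compactness (finiteness of derivations) then yields the finite $\Sigma_0$. I expect no real obstacle beyond checking that Lemma~\ref{no-first-use} is stated for arbitrary $\Sigma\subseteq\Id(\bN)$, which it is. The substantive difficulty, namely that derivations of $\varepsilon_n$ never pass through instances of $\Delta$, has already been handled there.
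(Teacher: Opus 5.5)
Your proof is correct and matches the paper's proof. From $\cD\leq\cV$ and compactness you get a finite $\Sigma_0\subseteq\Id(\bN)$ such that $\Sigma_0\cup\{x^3+z\approx x^3,\ x^2y^2+z\approx x^2y^2\}$ derives the finite basis of $\cV$, and hence every $\varepsilon_n$; Lemma~\ref{no-first-use} then yields $\Sigma_0\vdash\varepsilon_n$ for all $n\geq3$, contradicting \eqref{AEI-independent}.
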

\begin{proof}
Suppose $\Delta$ is a finite basis for $\cV$. Since every member of $\Delta$ holds in $\cD$, compactness of equational logic provides a finite set $\Sigma\subseteq\Id(\bN)$ such that
\[
 \Sigma\cup\{x^3+z\approx x^3,\ x^2y^2+z\approx x^2y^2\}
 \vdash\Delta.
\]
Since $\cV\leq\cM$, the set $\Delta$ implies every $\varepsilon_n$. Lemma~\ref{no-first-use} then gives $\Sigma\vdash\varepsilon_n$ for every $n\geq3$, contrary to \eqref{AEI-independent}.
\end{proof}

\begin{proposition}\label{D-strict}
The inclusions $\cW_2<\cD<\cW_3$ are strict.
\end{proposition}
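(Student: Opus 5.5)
We must prove four things: $\cW_2\leq\cD$, $\cD\leq\cW_3$, and that both inclusions are strict.

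\textbf{The inclusions.} For $\cW_2\leq\cD$, we check that $x^2+z\approx x^2$ implies both defining identities of $\cD$.
\begin{itemize}
\item[] By \eqref{A}, $x^2\preceq x^3$, so $x^3\approx x^3+x^2\approx x^3+x^2+z\approx x^3+z$.
\item[] Substituting $x\mapsto xy$ gives $x^2y^2+z\approx x^2y^2$.
\end{itemize}
The inclusion $\cD\leq\cW_3$ is immediate from the definition \eqref{W-D}.

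\textbf{Strictness of $\cD<\cW_3$.} We show that $x^2y^2+z\approx x^2y^2$ fails in $\cW_3$. For this we need a member of $\cM$ satisfying $x^3+z\approx x^3$ in which $a^2b^2$ is not the top for some $a,b$. The natural candidate is the truncation $A_3$, which lies in $\cM$ because it is a quotient of $\bN$.
\begin{itemize}
\item[] In $A_3$, every cube $a^3$ equals $\min\{3a,3\}$. This is $3$ (the top) for $a\geq1$, but $0$ when $a=0$. So $x^3+z\approx x^3$ fails at $x=0$, and $A_3$ itself is not in $\cW_3$.
\item[] The fix is to remove the identity. The subsemiring $\{1,2,3\}$ of $A_3$ is closed under max and truncated addition, and it lies in $\cM$.
\item[] In $\{1,2,3\}$, every cube is $3$, so $x^3+z\approx x^3$ holds.
\item[] But $a^2b^2=\min\{2a+2b,3\}$ is $3$ whenever $a,b\geq1$, so this choice fails to separate.
\end{itemize}
Since any nonzero $a,b$ give $2a+2b\geq4$, we need more room. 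Take the truncation $A_k$ for larger $k$ and its subsemiring of values in $[1,k]$.
\begin{itemize}
\item[] Cubes are $\geq3$, so we need every value $\geq3$ to be top, i.e.\ $k=3$.
\item[] That reproduces the failed case above.
\end{itemize}
So we switch to a Rees quotient of a tail subsemiring, as in Corollary~\ref{tails}. Use $T=\{h+1,h+2,\ldots\}$ modulo the ideal $I_m$, which is a member of $\cM$. The requirements are:
\begin{itemize}
\item[] cubes reach the top: $3(h+1)\geq m$;
\item[] $a^2b^2$ does not reach the top: $4(h+1)<m$.
\end{itemize}
These two conditions are incompatible, so symmetric-degree considerations fail.

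We therefore argue syntactically instead, using \eqref{convex}. Every cube value is at least $3\min a$ while $a^2b^2\geq4\min$, so value-based separations point the wrong way. The correct approach is to describe the fully invariant congruence of $\cW_3$ on the free algebra via exponent sets. A term lies in the ideal generated by cubes if some additive subterm dominates, via \eqref{convex}, a vector $3\alpha(\ba)$. A plausible invariant is: \emph{the term has an additive subterm with some coordinate $\geq3$, or with two coordinates $\geq2$, or $\ldots$}. We would show that $x^2y^2$ does not derive to $x^2y^2+z$ from $\Id(\bN)\cup\{x^3+z\approx x^3\}$. This follows the first-use argument of Lemma~\ref{no-first-use}: every term $\bN$-equal to $x^2y^2$ has no additive subterm containing a variable three times, since setting that variable to $1$ gives a value $\geq3$, whereas $x^2y^2$ has value $\leq2$. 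Hence $x^3+z\approx x^3$ is never applicable, and the derivation would lie in $\Id(\bN)$, which is impossible since $z\notin\ct{x^2y^2}$ by \eqref{content-N}.

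\textbf{Strictness of $\cW_2<\cD$.} We run the same first-use argument for $x^2\approx x^2+z$ from $\Id(\bN)\cup\{x^3+z\approx x^3,\ x^2y^2+z\approx x^2y^2\}$. Any term $\bN$-equal to $x^2$ has no additive subterm with a variable three times, since such a subterm would have value $\geq3$ against $x^2$'s value $\leq2$. It also has no subterm with two distinct variables each occurring twice, for the same reason. By the reasoning of Lemma~\ref{no-first-use}, the added identities never apply, and we conclude again by \eqref{content-N}.

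The main obstacle was realizing that value-based separating algebras fail here. The first-use invariant avoids that problem and handles both strict inclusions uniformly.
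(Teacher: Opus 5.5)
Your final argument is correct, but it proves both strict inclusions by a different route from the paper. You adapt the first-use argument of Lemma~\ref{no-first-use}. Every term $\bN$-equal to $x^2y^2$ (respectively $x^2$) avoids the additive-subterm patterns that any application of the added identities would create. Hence a derivation of $x^2y^2+z\approx x^2y^2$ in $\cW_3$ (respectively of $x^2+z\approx x^2$ in $\cD$) would lie entirely in $\Id(\bN)$, contradicting \eqref{content-N}.

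The paper instead exhibits separating algebras. For $\cW_2<\cD$ it uses exactly the subsemiring $\{1,2,3\}$ of $A_3$ that you examined and correctly placed in $\cD$. You only needed to notice that $1^2=2$ is not the top, so this algebra lies outside $\cW_2$. For $\cD<\cW_3$ the paper takes $\bN^2\setminus\{(0,0)\}$ and collapses the pairs with a coordinate at least $3$. There every cube equals $\infty$, while $(1,0)^2(0,1)^2=(2,2)\neq\infty$.

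So your closing claim that value-based separating algebras fail is wrong. Only totally ordered algebras fail: in a chain, $x\leq y$ gives $x^3\leq x^2y\leq x^2y^2$, so every chain in $\cW_3$ lies in $\cD$. A two-coordinate algebra avoids this obstruction.

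The paper's route gives explicit finite witnesses. Yours avoids constructing algebras and checking their membership in $\cM$, reuses a lemma already proved, and handles both inclusions uniformly. In a write-up, drop the exploratory paragraphs and the unfinished ``plausible invariant'' discussion; the first-use argument stands on its own.
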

\begin{proof}
If all squares are greatest, then so are all cubes and all products of two squares. Hence $\cW_2\leq\cD\leq\cW_3$.

The subsemiring $\{1,2,3\}$ of $A_3$ has every product of at least three factors equal to $3$. Thus it belongs to $\cD$, but $1^2=2$ is not greatest, so it is outside $\cW_2$.

For the other inclusion, consider $T=\bN^2\setminus\{(0,0)\}$. Collapse the additive order filter and multiplicative ideal consisting of pairs having a coordinate at least $3$. The quotient has universe
\[
 \bigl(\{0,1,2\}^2\setminus\{(0,0)\}\bigr)\cup\{\infty\}.
\]
On the pairs, addition is coordinatewise maximum. Multiplication is coordinatewise numerical addition when both resulting coordinates are at most $2$, and is $\infty$ otherwise. The element $\infty$ absorbs both operations. This quotient belongs to $\cM$, every cube equals $\infty$, but
\[
 (1,0)^2(0,1)^2=(2,2)\neq\infty.
\]
It lies in $\cW_3\setminus\cD$.
\end{proof}

\section{Identities making additive subterms greatest}\label{greatest}
Let $\Omega\subseteq\Xc$ be arbitrary. For each $\bw\in\Omega$, choose a variable $z_{\bw}\notin\ct{\bw}$, and define
\begin{equation}\label{V-Omega}
 \cV_\Omega=\cM\cap[\,\bw+z_{\bw}\approx\bw:\bw\in\Omega\,].
\end{equation}
We first treat two types of a single additional identity.
\begin{lemma}\label{one-greatest}
The following varieties are finitely based:
\begin{align}
 &\cM\cap[x_1\cdots x_r+z\approx x_1\cdots x_r]
       &&(r\geq1),\label{linear-greatest}\\
 &\cM\cap[x^2y_1\cdots y_m+z\approx x^2y_1\cdots y_m]
       &&(m\geq0),\label{single-square-greatest}
\end{align}
where the displayed variables are distinct except for the repeated $x$.
\end{lemma}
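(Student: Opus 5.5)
The plan is to describe, inside each of the two varieties, exactly which additive subterms are forced to be greatest. After that, every identity either holds for a trivial reason or involves only boundedly many variables. The reduction to bounded size will copy the deletion argument of Theorem~\ref{unit-fb}, with numerical $0$ in $\bN$ playing the role of the identity $e$.

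\textbf{The linear identity \eqref{linear-greatest}.} Let $\cV$ be this variety. Any $\bw\in\Xc$ with $\len{\bw}\geq r$ is an instance of $x_1\cdots x_r$: split $\bw$ into $r$ nonempty factors. So in $\cV$ every term containing such an additive subterm $\bw$ is greatest, i.e.\ $\bu\approx\bu+\bt$ for every term $\bt$. Now take an identity $\bu\approx\bu+\bq$ of $\cV$, reduced by \eqref{one-sided}, and split into cases.
\begin{enumerate}
\item If $\bu$ has an additive subterm of length at least $r$, the identity follows from the defining identity alone.
\item If every $\bu_i$ has length less than $r$ but $\len{\bq}\geq r$, the identity fails in $\cV$. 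To see this I use $T=\{1,\ldots,r\}$ with $a\cdot b=\min\{a+b,r\}$. This is the Rees quotient of the tail $\{1,2,\ldots\}$ by $I_r$, so it lies in $\cM$ by Corollary~\ref{tails}. In $T$, $r$ is greatest and every product of $r$ elements equals $r$, so $T\in\cV$. Assigning $1$ to all variables gives $\bu\leq r-1<r=\bq$.
\item If all additive subterms involved have length less than $r$, the identity holds in $\bN$; the only place membership in $\cV$ could intervene is in case (ii). Then $\ct{\bq}\subseteq\ct{\bu}$ by \eqref{content-N}. Evaluating the variables outside $\ct{\bq}$ at $0$ in $\bN$ yields $\bN\models\bv\approx\bv+\bq$, where $\bv$ is obtained from $\bu$ by deleting those variables. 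This identity has at most $r-1$ variables and subterms of length at most $r-1$, so up to renaming it lies in a finite set $\Gamma\subseteq\Id(\bN)$. Then \eqref{factor} and $\Gamma$ give $\bu\approx\bu+\bv\approx\bu+\bq$.
\end{enumerate}
Hence \eqref{A}, the defining identity and $\Gamma$ form a finite basis for $\cV$.

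\textbf{The square identity \eqref{single-square-greatest}.} The same scheme applies, with ``forced greatest'' now meaning: some variable occurs at least twice and the length is at least $m+2$. Every such product is an instance of $x^2y_1\cdots y_m$ (use the repeated variable for $x$ and distribute the remaining factors among the $y_j$). Terms containing such a subterm are again trivially greatest.

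The remaining additive subterms are of two kinds: linear subterms of arbitrary length, and non-linear subterms of length at most $m+1$. Because linear subterms can be arbitrarily long, the deletion argument no longer bounds the number of variables automatically. I will use the following additional reductions.
\begin{itemize}
\item If $\bq$ is linear, an argument like the one for $x^2$ in Proposition~\ref{K-in-M} should show that $\bq$ is bounded by subterms using only its own variables. Deletion then leaves at most $\len{\bq}$ variables, each occurring once. Identities of $\bN$ between such terms can be reduced to a finite family via \eqref{convex}, since the relevant exponent vectors are $0/1$-vectors or vectors of bounded total size.
\item If $\bq$ is non-linear and forced greatest while no $\bu_i$ is, I need a separating algebra in $\cV$. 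I would take the subsemiring of $\bN^2$ (or $\bN^k$) generated suitably, and collapse to a point the filter and ideal of elements ``at or above'' the value of every instance of $x^2y_1\cdots y_m$. This mirrors the construction in Proposition~\ref{D-strict}.
\end{itemize}

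I expect the main obstacle to be this separation step in the square case: constructing a member of $\cV$ in which a non-forced term $\bu$ stays strictly below a forced $\bq$. I would first try the quotient of $\bN^{n}$, with $n=|\ct{\bu}|$, by the ideal generated by all values $2\mathbf e_i+\mathbf f$ with $|\mathbf f|\geq m$. I would check by \eqref{convex} that the evaluation sending each variable to its coordinate vector keeps every $\bu_i$ outside this ideal while $\bq$ falls into it.
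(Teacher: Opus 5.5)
\textbf{Linear case.} This part is essentially sound. Your $T$ is a homomorphic image of the subsemiring $\{1,2,\ldots\}$ of $\bN$ and satisfies the defining identity, so it does separate in (ii). In (iii), however, you only assert that an identity of $\cV$ whose additive subterms are all shorter than $r$ already holds in $\bN$. The claim is true, for instance by a first-use argument as in Lemma~\ref{no-first-use}, but you give no argument. This is a symptom of your framing: starting from an arbitrary identity of $\cV$ obliges you to show that certain identities fail in $\cV$. Since $\Id(\cV)$ is the deductive closure of $\Id(\bN)$ together with the defining identity, it suffices to derive every identity $\bu\approx\bu+\bq$ of $\bN$ from your finite set. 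With that framing, (ii) is excluded simply by evaluating all variables at $1$ in $\bN$, and (iii) needs no extra claim. This is how the paper proceeds.

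\textbf{Square case: the gap.}
\begin{enumerate}
\item Your second bullet, which you yourself call the main obstacle, is not carried out, and the proposed separating algebra fails. In a Rees quotient of $\bN^n$, a sum is evaluated by the coordinatewise maximum, so $\bu$ can land in the collapsed ideal even though no $\bu_i$ does. Take $m=1$ and $\bu=x_1^2+x_2x_3$, which has no forced-greatest subterm. The coordinate-vector evaluation gives $(2,0,0)\vee(0,1,1)=(2,1,1)$, which has a coordinate $2$ and total $4\geq m+2$. You would also have to discard $\mathbf 0$, since $x\mapsto\mathbf 0$ violates the defining identity.
\item Your first bullet is only a hope. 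After deletion, short nonlinear $\bu_i$ remain nonlinear. For a long linear $\bq$, the number $|\ct{\bq}|$ of surviving variables is unbounded, so no finite family results without a further argument.
\end{enumerate}

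\textbf{The missing step.} It is an elementary evaluation. Put $d=m+1$, and let $\bu\approx\bu+\bq$ hold in $\bN$ with no nonlinear $\bu_i$ of length greater than $d$.
\begin{enumerate}
\item If $\len{\bq}>d$, assign $1$ to the variables in $\ct{\bq}$ and $0$ to all others. Short $\bu_i$ then have value at most $d$, linear ones at most $|\ct{\bq}|$, while $\bq$ has value $\len{\bq}$. Hence $\bq$ is linear, and some linear $\bu_i$ contains all of $\ct{\bq}$, so \eqref{factor} applies. The same evaluation shows that your case of a nonlinear forced-greatest $\bq$ never arises.
\item If $\len{\bq}\leq d$, your deletion step yields an identity on at most $d$ variables whose additive subterms have length at most $d$. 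A linear $\bu_i$ keeps at most $|\ct{\bq}|\leq d$ variables. So this identity lies in a fixed finite set $\Gamma_d$ of identities of $\bN$, and the argument closes as in your linear case.
\end{enumerate}
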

\begin{proof}
Consider \eqref{single-square-greatest}, and put $d=m+1$. Its defining identity and \eqref{A} make every nonlinear additive subterm of length at least $d+1$ greatest. Indeed, choose two occurrences of one variable for $x^2$, choose $m$ further factors for the $y_i$, and multiply by any remaining factors.

Fix $d$ variables. Let $\Gamma_d$ consist of all identities of $\bN$ between the finitely many terms on these variables whose additive subterms have length at most $d$, modulo commutativity and additive idempotence. We prove that the commutative ai-semiring axioms, \eqref{A}, the defining identity in \eqref{single-square-greatest}, and $\Gamma_d$ form a basis.

Let $\bu\approx\bu+\bq$ hold in $\bN$. If $\bu$ has a nonlinear additive subterm of length at least $d+1$, it is greatest, and the identity follows. Otherwise every additive subterm of $\bu$ is linear or has length at most $d$.

Suppose $\len{\bq}>d$. Evaluate each variable of $\bq$ at numerical $1$ and all others at $0$. An additive subterm of length at most $d$ has value at most $d$, and a linear additive subterm has value at most $|\ct{\bq}|$. Since $\bq$ has value $\len{\bq}$, validity forces $\bq$ to be linear, and some linear additive subterm of $\bu$ must contain every variable of $\bq$. Then \eqref{factor} proves the identity.

Now suppose $\len{\bq}\leq d$. Delete the variables outside $\ct{\bq}$ from the additive subterms of $\bu$, omitting products that become empty, and call the resulting term $\bv$. By \eqref{content-N}, this term is nonempty. Evaluation of the deleted variables at numerical $0$ gives $\bN\models\bv\approx\bv+\bq$. Each additive subterm of $\bv$ has length at most $d$: an originally short subterm remains short, and an originally linear one has at most $|\ct{\bq}|\leq d$ variables. The identity therefore belongs to $\Gamma_d$ after renaming. Using \eqref{factor},
\[
 \bu\approx\bu+\bv\approx\bu+\bv+\bq\approx\bu+\bq.
\]
This proves finite basedness in \eqref{single-square-greatest}.

For \eqref{linear-greatest}, the case $r=1$ is the trivial variety. For $r\geq2$, put $d=r-1$. Every additive subterm of length at least $r$ is greatest. If none occurs in $\bu$, all its additive subterms have length at most $d$. Evaluation at $1$ on all variables gives $\len{\bq}\leq d$. The same argument with $\Gamma_d$ applies.
\end{proof}

\begin{theorem}\label{Omega-classification}
For $\cV_\Omega$ defined in \eqref{V-Omega}, the following conditions are equivalent:
\begin{enumerate}
\item $\cV_\Omega$ is finitely based;
\item some $\bw\in\Omega$ has every variable occurring at most twice and at most one variable occurring twice;
\item $\cD\nleq\cV_\Omega$.
\end{enumerate}
\end{theorem}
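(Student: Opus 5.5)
The equivalences will be proved in the cycle (ii)$\Rightarrow$(i)$\Rightarrow$(iii)$\Rightarrow$(ii), using Lemma~\ref{one-greatest} for the first step and Theorem~\ref{NFB-interval} for the second.

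For (ii)$\Rightarrow$(i), let $\bw\in\Omega$ be as in (ii). Up to renaming, $\bw=x_1\cdots x_r$ or $\bw=x^2y_1\cdots y_m$. By Lemma~\ref{one-greatest}, the variety $\cW=\cM\cap[\bw+z\approx\bw]$ is finitely based. I then want to show that $\cV_\Omega$ is obtained from $\cW$ by adding only finitely many identities. The first idea would be to reuse the proof of Lemma~\ref{one-greatest}. Modulo the identity for $\bw$, every nonlinear additive subterm of length at least $d+1$ is greatest. Each identity $\bw'+z\approx\bw'$ with $\bw'\in\Omega$ is therefore either implied, when $\bw'$ is long and nonlinear, or it is equivalent to an identity over a bounded set of short words. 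Concretely, $\bw'$ is equivalent modulo $\cW$ to $\bw'$ with exponents capped, and with long linear parts handled by the same argument. This gives a bounded family of candidate words, so only finitely many distinct identities arise. A cleaner route might be to check that $\cW$ has only finitely many subvarieties of the form $\cW\cap[\Sigma]$ with $\Sigma$ a set of greatest-element identities. This should follow because the relevant words reduce, modulo $\cW$, to finitely many normal forms: words of length at most $d$, and linear words whose greatest-element identity is implied by that of any shorter linear subword, by \eqref{A}. Among linear words, the one of minimal length dominates the others. Hence $\cV_\Omega=\cW\cap[\text{finitely many}]$, which is finitely based.

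For (i)$\Rightarrow$(iii), suppose $\cD\leq\cV_\Omega$. Then $\cV_\Omega$ lies in the interval of Theorem~\ref{NFB-interval} and is nonfinitely based. By contraposition, (i) implies (iii).

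For (iii)$\Rightarrow$(ii), suppose (ii) fails. Then every $\bw\in\Omega$ has either a variable occurring at least three times or two variables each occurring at least twice. In $\cD$, the identities $x^3+z\approx x^3$ and $x^2y^2+z\approx x^2y^2$, combined with \eqref{A}, make every such $\bw$ greatest, since $\bw$ has $x^3$ or $x^2y^2$ as a factor. Hence $\cD\leq\cV_\Omega$, so (iii) fails. The case $\Omega=\varnothing$ also fits this argument: (ii) fails, and $\cV_\Omega=\cM\geq\cD$.

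The main obstacle is (ii)$\Rightarrow$(i). There the infinitely many identities must be reduced to finitely many relative to the single identity from Lemma~\ref{one-greatest}, and long linear words in $\Omega$ need care. My plan for these is to observe that a greatest linear word $x_1\cdots x_s$ implies all longer words greatest, by \eqref{A}. Only the minimal such length, together with finitely many short nonlinear words, then matters.
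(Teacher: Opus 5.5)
Your proposal is correct and follows essentially the same route as the paper. It uses the same cycle: (ii)$\Rightarrow$(i) comes from Lemma~\ref{one-greatest}, since long words (all of them when $\bw$ is linear, the nonlinear ones otherwise) are implied, a minimal-length linear word in $\Omega$ dominates the longer linear ones, and only finitely many short patterns remain up to renaming; (i)$\Rightarrow$(iii) comes from Theorem~\ref{NFB-interval}; and (iii)$\Rightarrow$(ii) uses the factors $x^3$ or $x^2y^2$. Your side remark that $\bw'$ is equivalent to a word with capped exponents is unnecessary and, as stated, unjustified, because the remaining words already have length at most $d$.
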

\begin{proof}
Suppose (ii) holds. If the chosen $\bw$ is linear of length $r$, its identity implies that of every additive subterm of length at least $r$. At lengths below $r$, only finitely many multiplicity patterns occur up to renaming variables. Retain one representative from each such pattern in $\Omega$. By Lemma~\ref{one-greatest}, the resulting finite additional set is adjoined to a finitely based variety, proving (i).

Otherwise the chosen additive subterm has the form $x^2y_1\cdots y_m$ after renaming. Its identity makes every nonlinear additive subterm of length at least $m+2$ greatest. Only finitely many nonlinear patterns remain at smaller lengths. If $\Omega$ also contains linear additive subterms, retain one of minimum length; its identity implies those of every longer linear additive subterm. Thus finitely many additional identities suffice, and Lemma~\ref{one-greatest} again proves (i).

If (ii) fails, every $\bw\in\Omega$ has a factor $x^3$, or a factor $x^2y^2$ with $x\neq y$. The defining identities of $\cD$ imply $\bw+z_{\bw}\approx\bw$ in either case. Hence $\cD\leq\cV_\Omega$, and Theorem~\ref{NFB-interval} makes $\cV_\Omega$ nonfinitely based. This proves (i)$\Rightarrow$(ii) and (iii)$\Rightarrow$(ii). Finally, (i)$\Rightarrow$(iii) follows from Theorem~\ref{NFB-interval}.
\end{proof}

\begin{corollary}\label{W-classification}
The variety $\cW_2$ is defined by the commutative ai-semiring axioms, \eqref{A}, and $x^2+z\approx x^2$. It is finitely based but not hereditarily finitely based. For every $k\geq3$, the variety $\cW_k$ is nonfinitely based.
\end{corollary}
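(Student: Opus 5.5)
The plan is to read all three assertions off Theorem~\ref{Omega-classification}, Lemma~\ref{one-greatest}, Proposition~\ref{D-strict} and Theorem~\ref{NFB-interval}, with $\Omega$ taken to be a single power.

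\emph{The basis for $\cW_2$.} Take $\Omega=\{x^2\}$, so that $\cW_2=\cV_\Omega$. This is the case $m=0$ of \eqref{single-square-greatest}. Inspecting the proof of Lemma~\ref{one-greatest} with $d=1$, I will check that the extra family $\Gamma_1$ is redundant. Terms over one variable whose additive subterms have length at most $1$ are just $x$, so $\Gamma_1$ contains only trivial identities. It remains to confirm that the argument closes without $\Gamma_1$:
\begin{enumerate}
\item if $\bu$ has a nonlinear additive subterm, that subterm is greatest by $x^2+z\approx x^2$ and \eqref{A};
\item if $\bu$ is linear and $\len{\bq}>1$, the evaluation argument gives $\bq$ linear and contained in a linear additive subterm of $\bu$, so \eqref{factor} applies;
\item if $\len{\bq}=1$, then $\bq$ is a variable of $\ct{\bu}$ by \eqref{content-N}, and again \eqref{factor} applies.
\end{enumerate}
Thus the commutative ai-semiring axioms, \eqref{A} and $x^2+z\approx x^2$ derive every identity of $\bN$ in the reduced form \eqref{one-sided}, and hence define $\cW_2$. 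For this I need $\cW_2$ to satisfy all identities of $\bN$, which holds because $\cW_2\leq\cM$ by definition. I also need these basis identities to hold in $\cW_2$, which is clear. So this set is a basis, and in particular $\cW_2$ is finitely based.

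\emph{$\cW_2$ is not hereditarily finitely based.} Proposition~\ref{D-strict} gives $\cW_2<\cD$. That is the wrong direction for a subvariety, so I will instead find a nonfinitely based variety inside $\cW_2$. Theorem~\ref{NFB-interval} only produces nonfinitely based varieties above $\cD$, so it cannot be used directly. The route I will take uses Theorem~\ref{main} applied to $\cV=\cW_2\leq\cA$: $\cW_2$ is hereditarily finitely based iff $\TR\notin\cW_2$. So it suffices to show $\TR\models x^2+z\approx x^2$, i.e.\ that every square in $\TR$ is the additive top.

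The obstacle is that this has to be verified from the table of $\TR$ in~\cite{SRG}. The data available here are limited: in the labelling used in Theorem~\ref{exclude}, $6\cdot5=3$, $5\cdot4=3$ and $2\cdot4=1$, and $1$ is greater than $3$. I expect $1$ to be the absorbing top and every square to equal $1$. Since $\TR$ is nonfinitely based, I anticipate this is its role, and I would confirm it from~\cite[Table~3]{SRG}. If some square fails to be greatest, the fallback is to look for another subvariety of $\cW_2$ containing a nonfinitely based algebra.

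\emph{$\cW_k$ for $k\geq3$.} The identity $x^3+z\approx x^3$ implies $x^k+z\approx x^k$ for $k\geq3$, via \eqref{A} and $x^3\preceq x^k$. It follows that $\cD\leq\cW_3\leq\cW_k\leq\cM$, and Theorem~\ref{NFB-interval} makes each $\cW_k$ nonfinitely based. Equivalently, apply Theorem~\ref{Omega-classification} to $\Omega=\{x^k\}$: condition (ii) fails since $x$ occurs $k\geq3$ times.
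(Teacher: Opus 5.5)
Your proposal is correct and follows the paper's proof on all three points: the case $m=0$, $d=1$ of Lemma~\ref{one-greatest} with $\Gamma_1$ trivial, the membership $\TR\in\cW_2$, and $\cD\leq\cW_3\leq\cW_k$ together with Theorem~\ref{NFB-interval}.

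The one step you left as an expectation, $\TR\models x^2+z\approx x^2$, is what the paper imports from \cite[Lemma~3.1]{SRG}. It needs no check of the table. By the recalled \cite[Proposition~3.4]{SRG}, an identity \eqref{failed-id} that fails in $\TR$ has every $\bu_i$ linear, so $x^2\approx x^2+z$ cannot fail. Then $\V(\TR)\leq\cW_2$ is already a nonfinitely based subvariety, so the detour through Theorem~\ref{main} is unnecessary.
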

\begin{proof}
Take $m=0$, and hence $d=1$, in the proof of Lemma~\ref{one-greatest}. The set $\Gamma_1$ contributes only trivial identities, giving the stated basis. Since $\TR\in\cW_2$ by~\cite[Lemma~3.1]{SRG}, the variety $\cW_2$ is not hereditarily finitely based. If $k\geq3$, then $\cD\leq\cW_3\leq\cW_k$, and Theorem~\ref{NFB-interval} applies.
\end{proof}

\begin{proposition}\label{W-chain}
We have
\[
 \cW_2<\cW_3<\cW_4<\cdots,\qquad
 \bigvee_{k\geq2}\cW_k=\cM.
\]
Every finite member of $\cW_k$ has nilpotent multiplicative reduct. There is no uniform bound on its nilpotency index, even among the finite members of $\cW_2$.
\end{proposition}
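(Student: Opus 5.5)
Four claims have to be checked: the inclusions $\cW_k\leq\cW_{k+1}$ are strict, the join is $\cM$, finite members are nilpotent, and the nilpotency index is unbounded in $\cW_2$.

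\emph{Inclusions and strictness.} Any instance of $x^{k+1}+z\approx x^{k+1}$ follows from $x^k+z\approx x^k$: substitute $x^k$-greatestness and use \eqref{A}, which gives $x^k\preceq x^{k+1}$, so $x^{k+1}$ is greatest too. For strictness I will use the truncations $A_k$ of Section~\ref{structure}, but without the identity, since $0^k=0$ is not greatest. Let $B_k$ be the subsemiring $\{1,\ldots,k\}$ of $A_k$. Every product of $k$ factors equals $k$, which is the greatest element, so $B_k\in\cW_k$. On the other hand $1^{k-1}=k-1$ is not greatest, so $B_k\notin\cW_{k-1}$. Hence $B_{k+1}\in\cW_{k+1}\setminus\cW_k$.

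\emph{Join.} It suffices to show that an identity $\bu\approx\bu+\bq$ holding in every $\cW_k$ holds in $\bN$. I will use Corollary~\ref{tails} together with the $B_k$. By Lemma~\ref{positive-values}, a failure in $\bN$ occurs at positive values $a_j\geq1$. Take $k$ larger than the value of $\bq$ there. Then all the values $\bu_i(\mathbf a)$ and $\bq(\mathbf a)$ are below $k$, so no truncation occurs and the same evaluation witnesses failure in $B_k\in\cW_k$. Hence the join, which lies in $\cM$, contains $\bN$ and equals $\cM$.

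\emph{Nilpotency.} Let $S\in\cW_k$ be finite with $|S|=n$. Every $k$th power $a^k$ equals the top element $\top$, which exists because $S$ is finite and idempotent-additive, and by the remark before \eqref{aei-terms} $\top$ is multiplicatively absorbing. Consider any product $a_1\cdots a_N$ with $N\geq n(k-1)+1$. By pigeonhole some element repeats $k$ times among the factors, so by commutativity the product has a factor $a^k=\top$, and the whole product is $\top$. Thus $S^N=\{\top\}$ and the multiplicative reduct is nilpotent.

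\emph{Unbounded index.} This is the step needing a construction. In $\cW_2$, products of $m$ distinct generators must avoid $\top$. I would take the Rees quotient of the free $\cM$-object suggested by \eqref{convex}: squarefree monomials in $x_1,\ldots,x_m$ under $\bN$-semantics, with every nonlinear monomial collapsed to $\infty$. Concretely, let $T_m$ be $P(\{1,\ldots,m\})\setminus\{\varnothing\}$ ordered as sets, giving nonempty antichain-free sums; alternatively use the quotient of $\bN^m\setminus\{\mathbf 0\}$ by the ideal and filter of vectors having some coordinate at least $2$. That quotient has universe the nonzero $0/1$ vectors together with $\infty$. Addition is coordinatewise max, and multiplication is the sum of vectors if it remains $0/1$, and $\infty$ otherwise. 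This is a finite Rees quotient of a member of $\cM$, exactly as in Proposition~\ref{D-strict}. Every square is $\infty$, the greatest element, so it lies in $\cW_2$. But $\mathbf e_1\cdots\mathbf e_m=(1,\ldots,1)\neq\infty$, so the nilpotency index exceeds $m$.

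The main point to verify is that the collapsed set is indeed an additive filter and a multiplicative ideal, so that the Rees quotient is a congruence. Both properties are clear for "some coordinate $\geq2$".
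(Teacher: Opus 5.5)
Your proposal is correct and follows the paper's proof essentially step for step. The pieces match: the same separating algebras $\{1,\ldots,k+1\}\subseteq A_{k+1}$; the join via Lemma~\ref{positive-values}, where you evaluate directly in $B_k$ instead of embedding the positive integers into $\prod_k C_k$; the same pigeonhole argument, with the slightly weaker but sufficient bound $n(k-1)+1$; and the same Rees quotient of $\bN^m\setminus\{\mathbf 0\}$. The stray sentence about $P(\{1,\ldots,m\})\setminus\{\varnothing\}$ and ``antichain-free sums'' is garbled and should be deleted, since the quotient construction you actually carry out is complete.
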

\begin{proof}
A greatest $k$th power is multiplicatively absorbing, giving $\cW_k\leq\cW_{k+1}$. The subsemiring $C_{k+1}=\{1,\ldots,k+1\}$ of $A_{k+1}$ belongs to $\cW_{k+1}$ but not to $\cW_k$, since $1^k=k<k+1$. The positive-integer subsemiring of $\bN$ embeds in $\prod_{k\geq2}C_k$ by truncation. It generates $\cM$ by Corollary~\ref{tails}, proving the join assertion.

Let $S\in\cW_k$ be finite, with greatest element $b$ and $|S|=m$. A product of $(k-1)(m-1)+1$ factors either contains $b$ or contains $k$ occurrences of the same other element. Commutativity then makes the product equal to $b$.

For unboundedness, let $n\geq1$ and start with $\bN^n\setminus\{\mathbf0\}$ and collapse the ideal and order filter of vectors with a coordinate at least $2$. The quotient has universe
\[
 Q_n=\bigl(\{0,1\}^n\setminus\{\mathbf0\}\bigr)\cup\{\infty\}.
\]
Every square is $\infty$, so $Q_n\in\cW_2$. The product of the $n$ coordinate vectors is $(1,\ldots,1)\neq\infty$, whereas every product of $n+1$ nonzero zero-one vectors has a coordinate at least $2$ before truncation. Thus the nilpotency index is exactly $n+1$.
\end{proof}

\section{The varieties defined by power stabilization}\label{powers}
For $k\geq1$, put
\begin{equation}\label{Pk}
 \cP_k=\cM\cap[x^k\approx x^{k+1}].
\end{equation}
By Theorem~\ref{proper-lf}, every proper subvariety of $\cM$ lies below some $\cP_k$.

\begin{lemma}\label{combine}
Assume \eqref{A}, \eqref{B}, and $x^2\approx x^3$. Let $\br,\bs\in\Xc$ have every variable occurring at most twice. Define a product $\bw$ by the following rules: a variable occurs twice in $\bw$ if it occurs twice in at least one of $\br,\bs$; otherwise, it occurs once in $\bw$ exactly when it occurs in both $\br$ and $\bs$. If this product is nonempty, then
\begin{equation}\label{combine-id}
 \br+\bs\approx\br+\bs+\bw.
\end{equation}
\end{lemma}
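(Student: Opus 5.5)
The plan is to obtain \eqref{combine-id} from a single substitution instance of \eqref{B}, multiplied by a common factor, and then to absorb the left-hand side into $\br+\bs$ using \eqref{factor}. First partition the variables. Let $S_r$ and $S_s$ be the sets of variables occurring twice in $\br$ and in $\bs$, respectively. Let $K$ be the set of variables occurring once in both $\br$ and $\bs$ and lying in neither $S_r$ nor $S_s$. Put
\[
 \ba=\prod_{v\in S_r}v,\qquad \bb=\prod_{v\in S_s}v,\qquad \mathbf c=\prod_{v\in K}v,
\]
where an empty product is simply omitted from any expression below. By the rule in the statement, $\bw$ is $\ba^2\bb^2\mathbf c$ after using $x^2\approx x^3$ to reduce exponents $4$ to $2$ on $S_r\cap S_s$. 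More precisely, $\bw$ has exponent $2$ on $S_r\cup S_s$ and exponent $1$ on $K$.

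The key observation is that $\ba^2\mathbf c$ is a factor of $\br$ and $\bb^2\mathbf c$ is a factor of $\bs$. Indeed, $K$ is disjoint from $S_r$, so the variables of $\mathbf c$ occur in $\br$ once and separately from $\ba^2$; the same holds for $\bs$. Hence \eqref{factor} gives
\[
 \br+\bs\approx\br+\bs+\ba^2\mathbf c+\bb^2\mathbf c .
\]
It therefore suffices to prove $\ba^2\mathbf c+\bb^2\mathbf c\preceq\ba^2\bb^2\mathbf c$ in the reverse sense, namely
\[
 \ba^2\mathbf c+\bb^2\mathbf c\approx\ba^2\mathbf c+\bb^2\mathbf c+\bw .
\]
For this, note that $x^2\approx x^3$ implies $x^4\approx x^2$. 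Substitute $(x,y)\mapsto(\ba^2,\bb^2)$ in \eqref{B} and then replace the fourth powers by squares. This yields
\[
 \ba^2+\bb^2\approx\ba^2+\bb^2+\ba^2\bb^2 .
\]
Multiplying both sides by $\mathbf c$ and distributing gives the required identity. The product $\ba^2\bb^2\mathbf c$ is then brought to the normal form $\bw$ by $x^2\approx x^3$ on the shared variables. If $\mathbf c$ is empty, no multiplication is performed and the identity is used directly.

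The main obstacle is the set of degenerate cases in which $\ba$ or $\bb$ is empty, since then \eqref{B} cannot be instantiated as above. If $\ba$ is empty, then $\bw=\bb^2\mathbf c$, which is already a factor of $\bs$, so \eqref{factor} alone suffices; the case of empty $\bb$ is symmetric. If both are empty, then $\bw=\mathbf c$, which is nonempty by hypothesis and is a factor of $\br$. A second point to check is the bookkeeping that $\bw$ coincides with $\ba^2\bb^2\mathbf c$ modulo $x^2\approx x^3$. This comes down to the disjointness of $K$ from $S_r\cup S_s$ and to the fact that no exponent exceeds $2$ after reduction.
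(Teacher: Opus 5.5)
Your proof is correct, and it takes a simpler route than the paper's.

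\textbf{The paper's route.} The paper sorts the relevant variables by their multiplicity pairs $(2,0),(2,1),(1,2),(0,2)$, plus a common part $\bh$ of pairs $(1,1),(2,2)$. It keeps the full retained products $\br_0,\bs_0$ of $\br,\bs$ on these variables. It then instantiates \eqref{B} at $x=\ba^2\bb$, $y=\mathbf d^2\mathbf c$ (its notation) with outer multiplier $\bh\bb\mathbf c$. That instance is chosen so that the two square terms reduce exactly to $\br_0$ and $\bs_0$, while the cross term is literally $\bw$. Its degenerate cases are $\ba^2\bb$ empty or $\mathbf d^2\mathbf c$ empty.

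\textbf{Your route.} You instantiate \eqref{B} only at the pure square parts $\ba^2,\bb^2$, in your notation with $S_r$ and $S_s$ allowed to overlap. You multiply only by the $(1,1)$-part $\mathbf c$, and so absorb the smaller factors $\ba^2\mathbf c$ of $\br$ and $\bb^2\mathbf c$ of $\bs$.

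What makes this work is that every variable of type $(2,1)$ or $(1,2)$ already appears squared on one side. Its single occurrence on the other side therefore never has to be carried along. The price is the reduction $v^4\approx v^2$, both on the fourth powers in the instance of \eqref{B} and on $S_r\cap S_s$ in the cross term. This is harmless under $x^2\approx x^3$.

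\textbf{Comparison.} Your version needs fewer variable classes and only the degenerate cases $S_r=\varnothing$ or $S_s=\varnothing$, each settled by \eqref{factor}. The paper's version is tighter, in that the outer terms coincide with the full retained products. That tightness is not needed for \eqref{combine-id}.

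One wording slip: the relation you mean is $\ba^2\bb^2\mathbf c\preceq\ba^2\mathbf c+\bb^2\mathbf c$, which is what your displayed identity states.
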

\begin{proof}
Delete from $\br$ and $\bs$ the variables whose multiplicity pairs are $(1,0)$ or $(0,1)$. Denote the products of the variables with multiplicity pairs $(2,0),(2,1),(1,2),(0,2)$ by $\ba,\bb,\mathbf c,\mathbf d$, respectively. Let $\bh$ collect the variables with pairs $(1,1),(2,2)$, with those common multiplicities. The retained products and the prescribed product are
\[
 \br_0=\bh\ba^2\bb^2\mathbf c,\qquad
 \bs_0=\bh\mathbf d^2\bb\mathbf c^2,\qquad
 \bw=\bh\ba^2\mathbf d^2\bb^2\mathbf c^2.
\]
An absent factor in these products is omitted. If $\ba^2\bb$ has no factors, then $\bw$ is a nonempty factor of $\bs$. If $\mathbf d^2\mathbf c$ has no factors, then $\bw$ is a nonempty factor of $\br$. In either case, \eqref{factor} proves \eqref{combine-id}.

Otherwise $\ba^2\bb$ and $\mathbf d^2\mathbf c$ are nonempty substitution terms for \eqref{B}. By $x^2\approx x^3$,
\[
\begin{aligned}
 \br_0+\bs_0
 &\approx\bh\ba^4\bb^3\mathbf c+\bh\mathbf d^4\bb\mathbf c^3\\
 &=\bh\bb\mathbf c\bigl((\ba^2\bb)^2+(\mathbf d^2\mathbf c)^2\bigr)\\
 &\approx\bh\bb\mathbf c\bigl((\ba^2\bb)^2+(\mathbf d^2\mathbf c)^2
                          +(\ba^2\bb)(\mathbf d^2\mathbf c)\bigr)\just{\eqref{B}}\\
 &\approx\br_0+\bs_0+\bh\ba^2\mathbf d^2\bb^2\mathbf c^2.
\end{aligned}
\]
When $\bh\bb\mathbf c$ has no factors, the displayed application of \eqref{B} is made without an outer multiplier. Both $\br_0$ and $\bs_0$ are now nonempty factors of $\br$ and $\bs$, respectively. Thus \eqref{factor} gives
\[
 \br+\bs\approx\br+\bs+\br_0+\bs_0
 \approx\br+\bs+\br_0+\bs_0+\bw
 \approx\br+\bs+\bw.
\]
\end{proof}

\begin{theorem}\label{P2-basis}
The variety $\cP_2$ is defined by the commutative ai-semiring axioms, \eqref{A}, \eqref{B}, and $x^2\approx x^3$.
\end{theorem}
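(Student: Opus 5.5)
The plan is to prove that $\Sigma$, consisting of the commutative ai-semiring axioms together with \eqref{A}, \eqref{B} and $x^2\approx x^3$, is a basis of $\cP_2$. Soundness is immediate: \eqref{A} and \eqref{B} hold in $\bN$, and $x^2\approx x^3$ is the defining identity of $\cP_2$. Since $\cP_2=\cM\cap[x^2\approx x^3]$, completeness reduces to deriving from $\Sigma$ every identity of $\bN$. By \eqref{one-sided} it suffices to derive $\bu\approx\bu+\bq$ from $\Sigma$ whenever $\bN\models\bu\approx\bu+\bq$. Our tools are the exponent criterion \eqref{convex}, Lemma~\ref{combine} and \eqref{factor}.

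First apply $x^2\approx x^3$ (in the form $x^k\approx x^2$ for $k\geq2$) to every additive subterm of $\bu$ and to $\bq$. This gives terms $\bu'$ and $\bq'$ in which every multiplicity is at most $2$, with $\Sigma\vdash\bu\approx\bu'$ and $\Sigma\vdash\bq\approx\bq'$. Write $\tau$ for the coordinatewise map $a\mapsto\min\{a,2\}$, so that $\alpha(\bu_i')=\tau\alpha(\bu_i)$ and $\alpha(\bq')=\tau\alpha(\bq)$. Note that $\bN\models\bu'\approx\bu'+\bq'$ need not hold, so we do not apply \eqref{convex} to the reduced identity. Instead, we use \eqref{convex} for the original identity to choose weights $\lambda_i\geq0$ with $\sum\lambda_i=1$ and $\alpha(\bq)\leq\sum\lambda_i\alpha(\bu_i)$. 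Put $I=\{i:\lambda_i>0\}$.

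Next define $\bw_I$ by iterating Lemma~\ref{combine} over the $\bu_i'$ with $i\in I$. The binary operation of that lemma is given coordinatewise by ``$2$ if either entry is $2$, otherwise the minimum''. This operation is associative and commutative, so the iterated result is well defined. A variable occurs in $\bw_I$ twice exactly when some $\bu_i'$ with $i\in I$ contains it twice. It occurs once exactly when every $\bu_i'$ with $i\in I$ contains it and none contains it twice.

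The key inequality is $\tau\alpha(\bq)\leq\alpha(\bw_I)$, checked coordinate by coordinate.
\begin{enumerate}
\item If $\tau\alpha(\bq)_x=2$, the weighted average of the $\alpha(\bu_i)_x$ over $I$ is at least $2$. Hence some $i\in I$ has $\alpha(\bu_i)_x\geq2$, and therefore $\alpha(\bw_I)_x=2$.
\item If $\tau\alpha(\bq)_x=1$, the average is at least $1$. Either some $i\in I$ has entry at least $2$, or every entry is exactly $1$, since a zero entry with all others at most $1$ would pull the average below $1$. In both cases $\alpha(\bw_I)_x\geq1$.
\end{enumerate}
Because $\bq$ is nonempty, this inequality shows that $\bw_I$ is nonempty, and $\bq'$ is a nonempty factor of $\bw_I$.

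The step needing care is the iterated use of Lemma~\ref{combine}, which requires each intermediate product to be nonempty. The operation is monotone in the sense that matters: a coordinate equal to $2$ in the final result is already $2$ in every intermediate result containing the relevant factor. A coordinate equal to $1$ in the final result is $1$ in every intermediate result. Hence nonemptiness of $\bw_I$ forces nonemptiness at every stage. Adding the intermediate products one at a time via \eqref{combine-id} gives $\bu'\approx\bu'+\bw_I$. Then \eqref{factor} gives
\[
 \bu'+\bw_I\approx\bu'+\bw_I+\bq'.
\]
Combining these, $\bu\approx\bu'\approx\bu'+\bq'\approx\bu+\bq$, which completes the derivation from $\Sigma$.
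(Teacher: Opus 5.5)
Your route is the paper's own. You take the weights from \eqref{convex} for the \emph{original} identity, and you are right that the capped identity need not hold in $\bN$. You then cap exponents at $2$, fold Lemma~\ref{combine} over the selected subterms, and check coordinatewise that $\bq'$ is a factor of $\bw_I$. Your checks (i) and (ii) are correct.

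The gap is in the step you flagged yourself. The claim that nonemptiness of $\bw_I$ forces nonemptiness at every stage is false for an arbitrary order of the factors. Your monotonicity facts only say that a final coordinate $2$ is present \emph{after} the factor carrying it has been absorbed. Before that, an intermediate product can be empty.

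For example, $\bN\models y+z+x^2\approx y+z+x^2+x$, and the weights $\tfrac14,\tfrac14,\tfrac12$ are admissible, so $I$ contains all three subterms and $\bw_I=x^2$. If you fold in the order $y,z,x^2$, the product prescribed by Lemma~\ref{combine} for $y+z$ is empty. The lemma does not apply, and there is no term to continue with. Since you also invoke associativity and commutativity to treat the order as irrelevant, the argument as written does not yield a derivation.

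The repair is the one the paper makes. If some selected $\bu_i'$ contains a square, start the iteration with it. Its coordinate $2$ survives every later application of Lemma~\ref{combine}, so all intermediate products are nonempty.

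If no selected $\bu_i'$ contains a square, every nonzero coordinate of $\bw_I$ equals $1$. Your inequality and the nonemptiness of $\bq$ give such a coordinate. Your observation that a final coordinate $1$ equals $1$ at every stage then does give nonemptiness throughout. Alternatively, as in the paper, $\bq$ is then linear and a factor of each selected $\bu_i$, and \eqref{factor} finishes directly.

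With the order fixed this way, the rest of your proof goes through.
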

\begin{proof}
Let $\mathcal U$ be defined by the stated finite set. Then $\cP_2\leq\mathcal U$. We prove that every identity $\bu\approx\bu+\bq$ of $\bN$ holds in $\mathcal U$.

Apply \eqref{convex} to the original additive subterms of $\bu$ and to $\bq$. After omitting coefficients equal to zero, there is a nonempty index set $I$ such that
\begin{equation}\label{positive-combination}
 \alpha(\bq)\leq\sum_{i\in I}\lambda_i\alpha(\bu_i),
 \qquad\lambda_i>0,\qquad\sum_{i\in I}\lambda_i=1.
\end{equation}
Let $\overline{\bu}_i$ and $\overline{\bq}$ be obtained by replacing every exponent greater than two by two. These replacements are valid in $\mathcal U$ by $x^2\approx x^3$.

If every selected $\bu_i$ is linear, \eqref{positive-combination} forces $\bq$ to be linear. Each variable of $\bq$ occurs in every selected $\bu_i$: a positive weighted average of numbers in $\{0,1\}$ is at least one only if all the numbers equal one. Thus $\bq$ is a factor of each selected $\bu_i$, and \eqref{factor} applies.

Otherwise begin with a selected $\overline{\bu}_i$ containing a square, and apply Lemma~\ref{combine} successively with the other selected $\overline{\bu}_j$. At every step that square is retained, so the prescribed product is nonempty. The resulting additive subterm $\bw$ satisfies
\[
 \sum_{i\in I}\overline{\bu}_i
 \approx\sum_{i\in I}\overline{\bu}_i+\bw.
\]
A variable occurs twice in $\bw$ exactly when it occurs at least twice in a selected original $\bu_i$. Every remaining variable occurs once in $\bw$ exactly when it occurs in all the selected $\bu_i$.

If a variable occurs at least twice in $\bq$, equation~\eqref{positive-combination} implies that it occurs at least twice in some selected $\bu_i$. If it occurs once in $\bq$, then either some selected $\bu_i$ contains it at least twice, or its selected exponents all belong to $\{0,1\}$. In the latter case they must all equal one. It follows that $\overline{\bq}$ is a factor of $\bw$. Adding the unselected additive subterms and using \eqref{factor} proves $\bu\approx\bu+\bq$ in $\mathcal U$.

Therefore $\mathcal U\leq\cM$ by \eqref{one-sided}. Its defining power identity then gives $\mathcal U=\cP_2$.
\end{proof}

\begin{theorem}\label{P-classification}
For $k\geq1$, the variety $\cP_k$ is finitely based if and only if $k\leq2$. Moreover,
\[
 \cP_1<\cP_2<\cP_3<\cdots,\qquad
 \bigvee_{k\geq1}\cP_k=\cM.
\]
\end{theorem}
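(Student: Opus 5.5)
The proof splits into four parts: finite basedness of $\cP_1$ and $\cP_2$, nonfinite basedness of $\cP_k$ for $k\geq3$, strictness of the chain, and the join.

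\emph{The cases $k\leq2$.} Theorem~\ref{P2-basis} already gives a finite basis for $\cP_2$. For $\cP_1$, the defining identity $x\approx x^2$ together with \eqref{A} yields
\[
 x+y\approx x^2+y^2\approx x^2+y^2+xy\approx(x+y)^2
\]
by \eqref{B}. The plan is to show that $\cP_1=\cM\cap[x\approx x^2]$ is finitely based by proving that it is $\V(A_1)$. Indeed, $\cP_1$ contains $A_1$ and lies in $\cP_2$. Under $x\approx x^2$, every additive subterm reduces to a linear product. An identity $\bu\approx\bu+\bq$ holding in $A_1$ then forces $\ct{\bq}$ to be contained in the content of some additive subterm of $\bu$. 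This follows by assigning $1$ to the variables outside a candidate set and $0$ to those inside. Then \eqref{factor} finishes the argument. So $\cP_1=\V(A_1)$ is finitely based, for instance by Corollary~\ref{Ak-fb}, or directly by the basis consisting of \eqref{A} and $x\approx x^2$.

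\emph{The chain and the join.} The truncation $A_{k+1}$ satisfies $x^{k+1}\approx x^{k+2}$ but not $x^k\approx x^{k+1}$ (take $x=1$). Hence $\cP_k<\cP_{k+1}$, and $\cP_k\leq\cP_{k+1}$ holds trivially. Since $A_k\in\cP_k$, Proposition~\ref{congruences} gives
\[
 \cM=\bigvee_k\V(A_k)\leq\bigvee_k\cP_k\leq\cM.
\]

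\emph{Nonfinite basedness for $k\geq3$.} The intended route is to show that $\cD\leq\cP_k$ and then invoke Theorem~\ref{NFB-interval}. This route fails at once, because the members of $\cD$ need not satisfy $x^k\approx x^{k+1}$: the cube is greatest, but $x^k$ need not equal $x^{k+1}$. Indeed $x^3$ greatest makes $x^k=x^3$ for $k\geq3$, since a greatest element is multiplicatively absorbing. So $\cD\leq\cP_3\leq\cP_k$ after all. In detail, in $\cD$ the element $x^3$ is greatest and hence absorbing, so $x^{k+1}=x^3x^{k-2}=x^3=x^k$ for $k\geq3$. Thus $\cD\leq\cP_k\leq\cM$ for every $k\geq3$, and Theorem~\ref{NFB-interval} shows that $\cP_k$ is nonfinitely based.

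The main point to verify carefully is therefore the $k\leq2$ direction, specifically the identification $\cP_1=\V(A_1)$, or more simply that $\cP_1$ has a finite basis. If the $A_1$-argument proves awkward, I would instead mimic the proof of Theorem~\ref{P2-basis}. With $x\approx x^2$, all exponents reduce to one, and \eqref{positive-combination} forces each variable of $\bq$ to lie in every selected $\bu_i$. So $\bq$ is a factor of a selected $\bu_i$, and \eqref{factor} gives the identity from the commutative ai-semiring axioms, \eqref{A}, and $x\approx x^2$.
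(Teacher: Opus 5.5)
Your treatment of the chain, the join, and the case $k\geq3$ is the paper's argument. You should delete the retracted sentence claiming that the route via $\cD$ ``fails at once''; the absorbing-cube computation that follows it is exactly what the paper uses.

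The gap is in the case $k=1$. You claim that an identity $\bu\approx\bu+\bq$ of $A_1$ with linear additive subterms forces $\ct{\bq}$ into the content of a \emph{single} additive subterm of $\bu$. This is false. The identity $x+y\approx x+y+xy$ holds in $A_1\cong M_2$, since both sides take the value $\max\{x,y\}$, yet $xy$ is a factor of neither $x$ nor $y$. So \eqref{factor} cannot finish the argument.

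Your fallback breaks on the same example. Identity \eqref{B} holds in $\bN$ with selected subterms $x^2,y^2$ and weights $\tfrac12$, and $x$ does not occur in $y^2$. The weighted-average step in the proof of Theorem~\ref{P2-basis} is valid only when the \emph{original} selected subterms are linear. Reducing exponents before invoking \eqref{positive-combination} is not legitimate, because the reduced identity need not hold in $\bN$; reduced \eqref{B} fails at $x=y=1$.

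The missing idea is that $x\approx x^2$ collapses multiplication onto addition:
\[
 x+y\approx(x+y)^2\approx x+y+xy
\]
by distributivity. Also \eqref{A} gives $xy\approx xy+x+y$, hence $xy\approx x+y$. Your own displayed chain already contains the first half of this, but you never draw the conclusion.

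With $xy\approx x+y$, every term $\bu$ is equivalent to $\ellD{\ct{\bu}}$. For an identity $\bu\approx\bu+\bq$ of $A_1$ (or of $\bN$), assigning $1$ to a variable of $\bq$ outside $\ct{\bu}$ and $0$ elsewhere shows $\ct{\bq}\subseteq\ct{\bu}$, and the identity follows. This is precisely the paper's proof: $\cP_1$ is the variety in which both operations are the same semilattice operation. So \eqref{A} and $x\approx x^2$ do form a basis, as you asserted, but for this reason rather than the one you gave.
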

\begin{proof}
Under $x^2\approx x$, distributivity and \eqref{A} give
\[
 x+y\approx(x+y)^2\approx x+y+xy\approx xy.
\]
Thus $\cP_1$ is the variety with both operations equal to a semilattice operation, and is finitely based. The case $k=2$ is Theorem~\ref{P2-basis}. For $k\geq3$, each member of $\cD$ has all cubes greatest and multiplicatively absorbing. Hence $\cD\leq\cP_k$, and Theorem~\ref{NFB-interval} gives nonfinite basedness.

Multiplying $x^k\approx x^{k+1}$ gives $\cP_k\leq\cP_{k+1}$. The algebra $A_{k+1}$ belongs to $\cP_{k+1}$ but not to $\cP_k$, so each inclusion is strict. Since $A_k\in\cP_k$, equation~\eqref{Ak-chain} proves the join assertion.
\end{proof}
\begin{corollary}\label{no-maximal}
The variety $\cM$ has no maximal proper subvariety.
\end{corollary}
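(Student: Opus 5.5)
Suppose, toward a contradiction, that $\cV<\cM$ is a maximal proper subvariety. The plan is to trap $\cV$ in one of the $\cP_k$ and then use the strict chain of Theorem~\ref{P-classification} to find a proper subvariety strictly above it.

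By Theorem~\ref{proper-lf}, $\cV$ satisfies $x^s\approx x^{s+1}$ for some $s\geq1$, so $\cV\leq\cP_s$. Theorem~\ref{P-classification} gives $\cP_s<\cP_{s+1}\leq\cM$, so $\cP_s$ is itself a proper subvariety of $\cM$. Maximality of $\cV$ then forces $\cV=\cP_s$. But $\cP_{s+1}$ is a proper subvariety of $\cM$ strictly containing $\cP_s$: the strictness $\cP_{s+1}<\cM$ comes from the chain $\cP_{s+1}<\cP_{s+2}\leq\cM$. This contradicts maximality.

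Equivalently, every proper subvariety lies in some $\cP_k$, and each $\cP_k$ is properly contained in $\cP_{k+1}$, which is still proper in $\cM$. No step is a genuine obstacle; the only point to check is that $\cP_{s+1}\neq\cM$. This follows from the strict chain, or directly because $x^{s+1}\approx x^{s+2}$ fails in $\bN$ at $x=1$.
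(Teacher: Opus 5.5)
Your proposal is correct and follows the paper's proof: you place $\cV$ below some $\cP_s$ via Theorem~\ref{proper-lf} and then use the strict chain $\cP_s<\cP_{s+1}<\cM$ from Theorem~\ref{P-classification}. The intermediate step forcing $\cV=\cP_s$ is harmless but unnecessary, since $\cV\leq\cP_s<\cP_{s+1}<\cM$ already shows directly that $\cV$ is not maximal.
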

\begin{proof}
For any $\cV<\cM$, Theorem~\ref{proper-lf} gives $\cV\leq\cP_k$ for some $k$. Then $\cV\leq\cP_k<\cP_{k+1}<\cM$ by Theorem~\ref{P-classification}.
\end{proof}

\section{The atoms and a restriction map}\label{atoms}
Let $M_2$ have universe $\{0,1\}$ with both operations equal to maximum. Let $T_2$ have the same addition and constant multiplication $ab=1$. These are the standard two-element ai-semirings of~\cite{YSR}; the latter is also used in~\cite{SRG}. Both belong to $\cM$: $M_2\cong A_1$ and $T_2$ is isomorphic to the subsemiring $\{1,2\}$ of $A_2$. We denote the trivial variety by $\cT$. We also use the standard basis for $\V(T_2)$: relative to the commutative ai-semiring axioms, it is defined by \eqref{A} and $xy\approx zt$~\cite{YSR}.

\begin{proposition}\label{atoms-exclude}
For every $\cV\leq\cM$,
\begin{align}
 T_2\notin\cV
 &\quad\Longleftrightarrow\quad\cV\models x^2\approx x
 \quad\Longleftrightarrow\quad\cV\leq\V(M_2),\label{T2-exclude}\\
 M_2\notin\cV
 &\quad\Longleftrightarrow\quad
 \cV\models x^k+z\approx x^k\text{ for some }k\geq1.\label{M2-exclude}
\end{align}
The only atoms of the subvariety lattice of $\cM$ are $\V(M_2)$ and $\V(T_2)$.
\end{proposition}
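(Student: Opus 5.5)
For \eqref{T2-exclude}, I will prove the cycle of implications. Assume $T_2\notin\cV$. By \eqref{one-sided} some identity $\bu\approx\bu+\bq$ holds in $\cV$ and fails in $T_2$. In $T_2$ every product of length at least two has value $1$, the top element. Hence failure in $T_2$ forces $\bq$ to be a single variable $x$ not occurring in $L_1(\bu)$, while every additive subterm of $\bu$ not of length one has value $1$; more precisely, the failing assignment gives $x\mapsto1$, all variables of $L_1(\bu)$ $\mapsto0$, and $L_{\ge2}(\bu)=\varnothing$. So $\bu$ is a sum of variables not containing $x$. Substituting $y\mapsto x^2$ for every variable $y$ of $\bu$ turns the identity into $x^2\approx x^2+x$, which is already a consequence of \eqref{A}. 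This is useless, so I instead substitute so that $\bu$ becomes $x^2$ and $\bq$ becomes $x$: send every variable of $\ct{\bu}$ to $x^2$ and $x$ to $x$ (legitimate since $x\notin\ct{\bu}$). This gives $x^2\approx x^2+x$ again. I therefore want the reverse direction $x^2+x\approx x$, and I must exploit that $\cV\le\cM$. By \eqref{content-N} applied inside $\cM$... but the identity need not hold in $\bN$. Instead substitute every variable of $\bu$ by $x$ and $x$ by $x^2$: this yields $x\approx x+x^2$, and combined with \eqref{A} ($x^2+x\approx x^2$) it gives $x\approx x^2$.

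Next I show that $x^2\approx x$ together with $\cV\le\cM$ gives $\cV\le\V(M_2)$. Under $x^2\approx x$, the computation in the proof of Theorem~\ref{P-classification} gives $x+y\approx xy$, so $\cV\le\cP_1$, which is the variety of semilattices with equal operations. That variety is $\V(M_2)$, since semilattices are generated by the two-element one. Finally, $\V(M_2)\models x^2\approx x$ while $T_2$ does not satisfy it, which closes the cycle.

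For \eqref{M2-exclude}, note first that $M_2$ fails $x^k+z\approx x^k$ (take $x=0$, $z=1$), which gives one direction. Conversely, let $\bu\approx\bu+\bq$ hold in $\cV$ and fail in $M_2$. In $M_2$, a term's value is $1$ exactly when some variable in its content is $1$. Failure therefore means $\ct{\bq}\not\subseteq\ct{\bu}$. I pick $z\in\ct{\bq}\setminus\ct{\bu}$, send all other variables to $x$, and keep $z$. The left side becomes a sum of powers of $x$, which equals $x^s$ for $s$ the maximal length by \eqref{A}. The right side gains $x^az^b$ with $b\ge1$, or $z^b$ when $a=0$. By \eqref{factor} we get $x^s+z\approx x^s$ after absorbing. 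The main delicate point here is keeping the $x$-exponents consistent; if $\bq=z^b$ I still conclude $x^s\approx x^s+z^b\succeq z$ by \eqref{factor}.

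For the atoms, both $\V(M_2)$ and $\V(T_2)$ are nontrivial and generated by two-element algebras. Any nontrivial subvariety of $\V(M_2)$ contains $M_2$, because $M_2$ is the only nontrivial semilattice subdirectly irreducible. For $\V(T_2)$, the basis \eqref{A}, $xy\approx zt$ shows its nontrivial members contain $T_2$. I expect the hardest step to be checking this last claim concisely: a nontrivial member has two elements $a<b$ with all products equal, so $\{a,ab\}$ or $\{a,b\}$ yields $T_2$. Now let $\cV\le\cM$ be nontrivial. If $T_2\notin\cV$, then $\cV\le\V(M_2)$ and so $M_2\in\cV$. Hence every nontrivial $\cV$ contains one of the two, so these are the only atoms.
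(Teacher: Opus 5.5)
\textbf{Gap in the first implication of \eqref{T2-exclude}.} You claim that failure of $\bu\approx\bu+\bq$ in $T_2$ forces $\bq$ to be a single variable outside $\ct{\bu}$. This is false. A failing assignment must give $\bu$ the value $0$ and $\bq$ the value $1$. The first requirement does make $\bu$ a sum of variables, all assigned $0$. The second requirement, however, holds automatically whenever $\len{\bq}\geq2$, because every product of length at least two equals $1$ in $T_2$. So you have omitted the case $\len{\bq}\geq2$.

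You cannot avoid this case by choosing the witness carefully. In $\cV=\V(M_2)$ every identity preserves content, so no identity $\bu\approx\bu+x$ with $x\notin\ct{\bu}$ holds there. The only available witnesses, such as $x\approx x+x^2$, are exactly the ones your case analysis excludes. As written, your argument does not even cover $\cV=\V(M_2)$.

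\textbf{Repair.} The fix is short and is the paper's first case. If $\len{\bq}=\ell\geq2$, identify all variables with $x$; since $\bu$ is a sum of variables, this gives $x\approx x+x^\ell$. By \eqref{factor}, $x^\ell\approx x^\ell+x^2$, so $x\approx x+x^2$, and then \eqref{A} gives $x\approx x^2$. Your substitution for the other case (all variables of $\bu$ to $x$, the variable $\bq$ to $x^2$) is correct and coincides with the paper's second case. You should delete the abandoned attempts that precede it.

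\textbf{The rest.} Your proof of \eqref{M2-exclude} matches the paper. For the atoms you take a slightly different route, and both variations are valid:
\begin{enumerate}
\item You observe that a nontrivial $\cV$ omitting $T_2$ lies in $\V(M_2)$ and therefore contains $M_2$. The paper instead combines both exclusions to force $x+z\approx x$.
\item You check minimality of $\V(T_2)$ directly from the basis with $xy\approx zt$, noting that the constant product is the top element. The paper cites this fact.
\end{enumerate}
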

\begin{proof}
Suppose $T_2\notin\cV$ and choose an identity $\bu\approx\bu+\bq$ of $\cV$ failing in $T_2$. Since every product of length at least two is greatest in $T_2$, all additive subterms of $\bu$ have length one. If $\len{\bq}\geq2$, identify all variables to get $x\approx x^{\len{\bq}}$. Equation~\eqref{A} then gives $x\approx x^2$. If $\bq$ has length one, its variable is absent from $\bu$; map it to $x^2$ and all variables of $\bu$ to $x$, with the same conclusion. Under $x^2\approx x$, the calculation in the proof of Theorem~\ref{P-classification} gives $xy\approx x+y$. Both operations therefore coincide with a semilattice operation. Conversely, $x^2\approx x$ fails in $T_2$, proving \eqref{T2-exclude}.

The identities of $M_2$ are exactly those with the same content on both sides. If $M_2\notin\cV$, choose an identity $\bu\approx\bu+\bq$ of $\cV$ and $t\in\ct{\bq}\setminus\ct{\bu}$. Map $t$ to $z$ and every other variable to $x$. If $k$ is the maximum length of an additive subterm of $\bu$, its image reduces to $x^k$ by \eqref{A}. The image of $\bq$ has $z$ as a factor, giving
\[
 x^k\approx x^k+\varphi(\bq)
 \approx x^k+\varphi(\bq)+z
 \approx x^k+z.
\]
No such identity holds in $M_2$, which proves \eqref{M2-exclude}.

The varieties $\V(M_2)$ and $\V(T_2)$ are minimal nontrivial varieties by~\cite{YSR}. If a subvariety of $\cM$ omitted both, \eqref{T2-exclude} and \eqref{M2-exclude} would imply $x+z\approx x$, making it trivial. Thus there are no other atoms.
\end{proof}

Put $\cW=\V(M_2,T_2)$.
\begin{proposition}\label{join-atoms}
Relative to the commutative ai-semiring axioms, $\cW$ is defined by \eqref{A} and
\begin{align}
 x^2&\approx x^3,\label{join1}\\
 xy&\approx x^2+y^2,\label{join2}\\
 x^2+y&\approx x^2+y^2.\label{join3}
\end{align}
Its subvariety lattice consists of $\cT$, $\V(M_2)$, $\V(T_2)$ and $\cW$, and $\cW<\V(A_2)$.
\end{proposition}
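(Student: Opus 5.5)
Let $\mathcal U$ denote the variety defined by the commutative ai-semiring axioms, \eqref{A}, \eqref{join1}, \eqref{join2} and \eqref{join3}. The plan is to show $\cW=\mathcal U$ through a normal form, then read off the subvariety lattice from Proposition~\ref{atoms-exclude}, and finally compare with the basis \eqref{A2-basis1}, \eqref{A2-basis2} of $\V(A_2)$.

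\textbf{Soundness.} First we check that the four identities hold in $M_2$ and in $T_2$; this gives $\cW\leq\mathcal U$.
\begin{itemize}
\item In $M_2$ every term evaluates to the maximum over its content. All four identities have equal content on both sides, so they hold.
\item In $T_2$ every product of length at least two equals $1$, which is the greatest element. Each side of \eqref{join1} and \eqref{join2} contains such a product. In \eqref{join3} both sides contain $x^2$. So both sides always evaluate to $1$.
\end{itemize}

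\textbf{Normal form.} Next we show that in $\mathcal U$ every term reduces to one of the forms
\[
 \sum_{x\in L}x
 \qquad\text{or}\qquad
 \sum_{x\in C}x^2 .
\]
In the second form, $C$ is the whole content.
\begin{enumerate}
\item By \eqref{join2}, \eqref{join1} and \eqref{A}, every additive subterm of length at least two reduces to a sum of squares of its variables. For example, $xyz\approx(x^2+y^2)z\approx x^2z+y^2z$, and each summand becomes $x^2+z^2$ and $y^2+z^2$ after using \eqref{join1}.
\item If at least one square is then present, \eqref{join3} together with associativity turns every remaining linear summand $y$ into $y^2$.
\end{enumerate}
Hence a term is determined in $\mathcal U$ by its content $c$ together with one bit: whether it is a pure sum of variables (no square) or a sum of squares.

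\textbf{Completeness.} These normal forms are separated by $M_2\times T_2$.
\begin{itemize}
\item $M_2$ detects the content.
\item $T_2$ detects the bit. Assigning $0$ to all variables, a pure sum of variables has value $0$, while a sum of squares has value $1$.
\end{itemize}
Two $\mathcal U$-inequivalent normal forms therefore differ in $\cW$, so $\mathcal U\leq\cW$. This step is the main point of the proof, but it is short once the normal form is established.

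\textbf{Subvariety lattice.} Let $\cV\leq\cW$. If $\cV$ contains both $M_2$ and $T_2$, then $\cV=\cW$. Otherwise, by Proposition~\ref{atoms-exclude}, \eqref{T2-exclude} or \eqref{M2-exclude} applies.
\begin{itemize}
\item If $T_2\notin\cV$, then $\cV\leq\V(M_2)$, which is an atom. So $\cV$ is $\cT$ or $\V(M_2)$.
\item If $M_2\notin\cV$, then $\cV\models x^k+z\approx x^k$. Using \eqref{join1} this becomes $x^2+z\approx x^2$. In normal form, every term containing a square is then greatest. Combining with \eqref{join2}, we get $\cV\models xy\approx zt$, so $\cV\leq\V(T_2)$ by the cited basis. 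Since $\V(T_2)$ is an atom, $\cV$ is $\cT$ or $\V(T_2)$.
\end{itemize}

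\textbf{Strict inclusion in $\V(A_2)$.} We have $M_2\cong A_1$, which is a quotient of $A_2$, and $T_2$ embeds in $A_2$. Hence $\cW\leq\V(A_2)$. The inclusion is strict because \eqref{join2} fails in $A_2$: taking $x=1$ and $y=0$ gives $xy=1$ but $x^2+y^2=2$.
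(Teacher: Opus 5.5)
Your proposal is correct and follows the paper's proof essentially step for step. It uses the same normal forms $\sum_{x\in C}x$ and $\sum_{x\in C}x^2$ separated by $M_2$ (content) and $T_2$ (presence of a square), the same appeal to \eqref{T2-exclude} and \eqref{M2-exclude} for the lattice, and the failure of \eqref{join2} in $A_2$. The only differences are cosmetic: you obtain $M_2$ as the quotient $A_2\to A_1$ rather than the paper's subalgebra $\{0,2\}$, and for $k=1$ in \eqref{M2-exclude} you need the substitution $x\mapsto x^2$ rather than \eqref{join1}, a case that is trivial anyway.
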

\begin{proof}
Both generators satisfy the displayed identities. Equations~\eqref{join1} and \eqref{join2} express every additive subterm of length at least two as the sum of the squares of its content variables. For the induction step, use
\[
 x^2y\approx x^4+y^2\approx x^2+y^2.
\]
If a term has an additive subterm of length at least two, \eqref{join3} also replaces every remaining additive subterm of length one by its square. Every term thus has one of the two forms
\[
 \sum_{x\in C}x,\qquad\sum_{x\in C}x^2,
\]
according as all original additive subterms have length one or at least one has greater length. In $M_2$, equality requires equal contents. In $T_2$, the second form is constantly greatest, whereas the first is not. These observations distinguish any two different represented terms and prove that the displayed identities define $\cW$.

A proper subvariety cannot contain both generators. If it omits $T_2$, equation~\eqref{T2-exclude} places it below $\V(M_2)$. If it omits $M_2$, equation~\eqref{M2-exclude} gives $x^k+z\approx x^k$. For $k=1$ the variety is trivial. For $k\geq2$, \eqref{join1} gives $x^2+z\approx x^2$, so all squares coincide, and \eqref{join2} makes multiplication constant. The standard basis for $T_2$ then places the variety below $\V(T_2)$~\cite{YSR}. This proves the lattice assertion.

The subsets $\{0,2\}$ and $\{1,2\}$ of $A_2$ are isomorphic to $M_2$ and $T_2$, respectively. Hence $\cW\leq\V(A_2)$. Identity~\eqref{join2} fails in $A_2$ at $x=0$, $y=1$, proving strictness.
\end{proof}

\begin{corollary}\label{fibres}
Let $\mathcal L(\mathcal U)$ denote the subvariety lattice of $\mathcal U$, and let
\[
 \pi:\mathcal L(\cM)\longrightarrow\mathcal L(\cW),\qquad
 \pi(\cV)=\cV\cap\cW.
\]
Then
\begin{align*}
 \pi^{-1}(\cT)&=\{\cT\},&
 \pi^{-1}(\V(M_2))&=\{\V(M_2)\},\\
 \pi^{-1}(\V(T_2))&=\bigcup_{k\geq2}[\V(T_2),\cW_k],&
 \pi^{-1}(\cW)&=[\cW,\cM].
\end{align*}
\end{corollary}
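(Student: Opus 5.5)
The plan is to use Proposition~\ref{join-atoms}: the lattice $\mathcal L(\cW)$ has exactly the four elements $\cT$, $\V(M_2)$, $\V(T_2)$ and $\cW$. For $\cV\leq\cM$, $\pi(\cV)$ is then determined by which of $M_2,T_2$ belong to $\cV$. Indeed, $\cV\cap\cW$ contains $M_2$ (respectively $T_2$) exactly when $\cV$ does, and a subvariety of $\cW$ is determined by which generators it contains. Each fibre is therefore described by the two membership conditions, which we translate into identities with Proposition~\ref{atoms-exclude}.

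We treat the fibres in order. For $\pi(\cV)=\cT$, $\cV$ omits both atoms, and the last paragraph of the proof of Proposition~\ref{atoms-exclude} gives $\cV=\cT$. For $\pi(\cV)=\V(M_2)$, $\cV$ omits $T_2$, so \eqref{T2-exclude} gives $\cV\leq\V(M_2)$; since $M_2\in\cV$, we get $\cV=\V(M_2)$. For $\pi(\cV)=\cW$, $\cV$ contains both generators, hence $\cW\leq\cV\leq\cM$; conversely every such $\cV$ has $\pi(\cV)=\cW$. This gives $[\cW,\cM]$.

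The remaining fibre, $\pi(\cV)=\V(T_2)$, is the one needing care. Here $T_2\in\cV$ and $M_2\notin\cV$, and \eqref{M2-exclude} gives $\cV\models x^k+z\approx x^k$ for some $k\geq1$. If $k=1$, then $\cV$ is trivial, contradicting $T_2\in\cV$. Since $x^k+z\approx x^k$ implies $x^{k'}+z\approx x^{k'}$ for $k'\geq k$ (greatest powers are multiplicatively absorbing under \eqref{A}, as in Proposition~\ref{W-chain}), we may take $k\geq2$. Hence $\cV\leq\cW_k$, and $\V(T_2)\leq\cV$ places $\cV$ in $[\V(T_2),\cW_k]$.

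For the converse, let $\V(T_2)\leq\cV\leq\cW_k$ with $k\geq2$. Then $T_2\in\cV$. Also $M_2\notin\cV$, because $x^k+z\approx x^k$ fails in $M_2$ at $x=0$, $z=1$. So $\pi(\cV)$ contains $T_2$ but not $M_2$, and therefore equals $\V(T_2)$. The only points to check are that $\V(T_2)\leq\cW_k$ (true, since $T_2$ has every product greatest, so $x^2+z\approx x^2$ holds) and the monotonicity of $\cW_k$ in $k$ recorded in Proposition~\ref{W-chain}. I do not expect a genuine obstacle. The most delicate step is the identification of $\pi(\cV)$ from atom membership, and this rests on the four-element lattice of $\cW$ given in Proposition~\ref{join-atoms}.
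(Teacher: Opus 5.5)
Your proposal is correct and follows essentially the same route as the paper: both read off $\pi(\cV)$ from which of $M_2,T_2$ lie in $\cV$, using the four-element lattice of $\cW$. Both use \eqref{T2-exclude} and \eqref{M2-exclude} to pin down each fibre, and both note that $x^k+z\approx x^k$ excludes $M_2$ for the converse. Your extra checks are the ones the paper leaves implicit: that $k=1$ forces triviality, that $M_2$ fails $x^k+z\approx x^k$ at $x=0$, $z=1$, and that $T_2\in\cW_k$.
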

\begin{proof}
If $\cV\cap\cW=\cT$, then $\cV$ omits both atoms and is trivial. If the intersection is $\V(M_2)$, then $T_2\notin\cV$, so \eqref{T2-exclude} gives $\cV=\V(M_2)$. If the intersection is $\V(T_2)$, then $\cV$ contains $T_2$ and omits $M_2$. By \eqref{M2-exclude}, it lies below some $\cW_k$, necessarily with $k\geq2$. Conversely, each variety in such an interval contains $T_2$ and omits $M_2$. The last equality is immediate from $\pi(\cV)=\cW$ if and only if $\cW\leq\cV$.
\end{proof}

\section{A descending chain and the remaining problem}\label{last}
We conclude with a consequence of the infinite basis in~\cite[Proposition~3.5]{SRG}. For $n\geq1$, let
\[
 \bu^{(n)}=x_1x_2+x_2x_3+\cdots+x_{2n}x_{2n+1}+x_{2n+1}x_1,
\]
and let $\sigma_n$ be the identity $\bu^{(n)}+z\approx\bu^{(n)}$, where $z$ is a new variable. For $N\geq1$, let $\mathcal B_N$ be defined by the commutative ai-semiring axioms, \eqref{A}, and
\[
 x^2+z\approx x^2,\qquad xyz\approx x^2,\qquad
 \sigma_1,\ldots,\sigma_N.
\]
\begin{proposition}\label{descending}
Each $\mathcal B_N$ is finitely based, and
\[
 \V(\TR)<\cdots<\mathcal B_{N+1}<\mathcal B_N\leq\V(A_2),
 \qquad\bigcap_{N\geq1}\mathcal B_N=\V(\TR).
\]
For $\bw\in\Xc$ and $z\notin\ct{\bw}$, the identity $\bw+z\approx\bw$ holds in some $\mathcal B_N$ if and only if it holds in all $\mathcal B_N$, if and only if it holds in $\TR$. These conditions are equivalent to $\len{\bw}\geq3$ or $\bw$ being nonlinear.
\end{proposition}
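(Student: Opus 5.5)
The plan is to read everything off the infinite basis of $\V(\TR)$ in~\cite[Proposition~3.5]{SRG}. I expect that basis to consist of the commutative ai-semiring axioms, \eqref{A}, $x^2+z\approx x^2$, $xyz\approx x^2$, and all $\sigma_n$ for $n\geq1$. Each $\mathcal B_N$ is then defined by a finite subset of this basis, so it is finitely based by definition. The containments $\mathcal B_{N+1}\leq\mathcal B_N$ and $\V(\TR)\leq\mathcal B_N$ are immediate, and $\bigcap_N\mathcal B_N$ is defined by the union of the bases, which is exactly the basis of $\V(\TR)$.

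For $\mathcal B_N\leq\V(A_2)$ I will derive the basis \eqref{A}, \eqref{A2-basis1}, \eqref{A2-basis2} inside $\mathcal B_1$.
\begin{enumerate}
\item The identity $x^2+z\approx x^2$ makes every square greatest, so any two greatest elements coincide.
\item For \eqref{A2-basis1}, both sides $xy+y^2$ and $x+y^2$ reduce to $y^2$.
\item For \eqref{A2-basis2}, the term $xy+yz+zx$ is $\bu^{(1)}$, which $\sigma_1$ makes greatest. Also $xyz\approx x^2$ is greatest. Hence $xyz\approx x^2\approx xy+yz+zx$.
\end{enumerate}

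For the statement about $\bw+z\approx\bw$, I use that $\V(\TR)\leq\mathcal B_{N}\leq\mathcal B_1$ for every $N$. Validity in $\mathcal B_1$ therefore propagates to all $\mathcal B_N$ and then to $\TR$, and validity in some $\mathcal B_N$ implies validity in $\TR$. It remains to close the circle.
\begin{enumerate}
\item If $\len{\bw}\geq3$ or $\bw$ is nonlinear, then $\bw$ has a factor $x^2$ or $xyt$. In $\mathcal B_1$ both factors equal a greatest element, by $x^2+z\approx x^2$ and $xyz\approx x^2$. Greatest elements are multiplicatively absorbing under \eqref{A}, so $\bw$ is greatest in $\mathcal B_1$.
\item Conversely, if $\bw$ is $x$ or $xy$ with $x\neq y$, the identity fails in $\TR$. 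For $\bw=x$ this follows from the last clause of the recalled \cite[Proposition~3.4]{SRG}, or simply because $\TR$ is nontrivial. For $\bw=xy$ the evaluation in the proof of Theorem~\ref{exclude} gives $xy=3$ while $1$ is strictly larger.
\end{enumerate}

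The main obstacle is strictness, $\mathcal B_{N+1}\neq\mathcal B_N$: I must show that $\sigma_{N+1}$ does not follow from the basis of $\mathcal B_N$. Strictness of $\V(\TR)<\mathcal B_N$ then follows. I would first check whether \cite{SRG} already proves the independence of the $\sigma_n$. Failing that, I plan a first-use argument in the style of Lemma~\ref{no-first-use}. Call a term \emph{good} if every additive subterm is linear of length at most two and the length-two subterms are edges of the odd cycle $C_{2N+3}$ on $x_1,\dots,x_{2N+3}$. Start a derivation at $\bu^{(N+1)}$, and note that applications of \eqref{A} and of the semiring axioms preserve goodness in both directions. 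Any instance of $x^2+z\approx x^2$ or of $xyz\approx x^2$ contains a square or a product of length at least three, so neither can act on a good term. An instance of $\sigma_k$ with $k\leq N$ requires images $\varphi(x_i)\varphi(x_{i+1})$ made of edges. Choosing one variable in each $\varphi(x_i)$ then yields a closed walk of odd length $2k+1<2N+3$ in $C_{2N+3}$, which is impossible because odd closed walks in an odd cycle must wind around it. Thus every term reachable from $\bu^{(N+1)}$ is good, while $\bu^{(N+1)}+z$ is not. Hence $\sigma_{N+1}$ is not derivable in $\mathcal B_N$. The routine but delicate part is verifying that goodness survives contexts and the reverse direction of \eqref{A}.
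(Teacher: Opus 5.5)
Most of your proposal is the paper's own argument. Finite basedness and the intersection are read off \cite[Proposition~3.5]{SRG}. The inclusion $\mathcal B_N\leq\V(A_2)$ comes from making both sides of \eqref{A2-basis1} and \eqref{A2-basis2} greatest. The classification of $\bw+z\approx\bw$ uses absorption by greatest elements plus a failing evaluation in $\TR$; your $x\mapsto2$, $y\mapsto6$, $z\mapsto1$ works as well as the paper's $5,6,1$. For $\bw=x$, keep only your second justification, the nontriviality of $\TR$. The recalled clause of \cite[Proposition~3.4]{SRG} is a necessary condition for failure, not a sufficient one. Deriving $\V(\TR)<\mathcal B_N$ from the chain, rather than from nonfinite basedness, is also fine.

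The genuine difference is the strictness $\mathcal B_{N+1}<\mathcal B_N$. The paper simply cites the proof of \cite[Proposition~3.6]{SRG}. You propose a self-contained first-use invariant in the style of Lemma~\ref{no-first-use}. This makes the result independent of the internals of the cited proof, using only the odd girth of the cycle, at the cost of length. The route works, but your definition of \emph{good} needs a repair. It constrains only the length-two subterms, so $\bu^{(N+1)}+z$ is itself good, and as stated the invariant separates nothing. Add the requirement that every variable lies in $\{x_1,\dots,x_{2N+3}\}$. With this requirement, write each step with a multiplier $\bs$ and an additive remainder $\bp$:
\begin{enumerate}
\item A forward use of \eqref{A} on a good term forces $\bs$ to be empty and $\varphi(x)$ to be a sum of cycle vertices, so it only adds vertices already present.
\item A backward use of \eqref{A} only deletes subterms.
\item Both sides of $x^2+z\approx x^2$ and of $xyz\approx x^2$ contain a square or a product of length at least three, so neither applies.
\item Both sides of $\sigma_k$ contain $\bs\varphi(\bu^{(k)})$. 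This forces $\bs$ to be empty and yields a closed walk of odd length $2k+1<2N+3$ in $C_{2N+3}$. That is impossible, since an odd closed walk in $C_{2N+3}$ has odd winding number and hence length at least $2N+3$.
\end{enumerate}
So no $\sigma_k$ with $k\leq N$ ever applies, and $\sigma_{N+1}$ is not derivable. By completeness, $\mathcal B_N\not\models\sigma_{N+1}$.
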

\begin{proof}
Finite basedness follows from the finite defining set. The basis in~\cite[Proposition~3.5]{SRG} gives the intersection equality. The proof of~\cite[Proposition~3.6]{SRG} shows that $\mathcal B_N$ fails $\sigma_{N+1}$, so $\mathcal B_{N+1}<\mathcal B_N$. Also $\V(\TR)<\mathcal B_N$ because the former is nonfinitely based.

In $\mathcal B_N$, every square is greatest, so \eqref{A2-basis1} holds. Identity $\sigma_1$ makes $xy+yz+xz$ greatest, and $xyz\approx x^2$ does the same for triple products. Thus \eqref{A2-basis2} holds, and the cited basis for $A_2$ gives $\mathcal B_N\leq\V(A_2)$.

A nonlinear additive subterm has a square factor, and every product of length at least three is greatest in $\mathcal B_N$. Conversely, a remaining $\bw$ is a variable or a product of two distinct variables. In $\TR$, assign $5$ to the former, or $5,6$ to the factors of the latter, and assign $1$ to $z$. The value of $\bw$ is $5$ or $3$, respectively, and $\bw+z\approx\bw$ fails. Since $\TR\in\mathcal B_N$ for every $N$, it fails there as well.
\end{proof}

Thus the identities making individual additive subterms greatest do not determine finite basedness among arbitrary subvarieties of $\cM$. Theorem~\ref{main} determines hereditary finite basedness, and Theorems~\ref{NFB-interval}, \ref{Omega-classification} and~\ref{P-classification} determine ordinary finite basedness for the families considered above. The remaining problem can be stated as follows.
\begin{problem}\label{remaining}
Determine which subvarieties $\cV$ satisfying
\[
 \V(\TR)\leq\cV\leq\cM,\qquad \cD\nleq\cV,
\]
are finitely based. In particular, determine the finitely based subvarieties in $[\V(\TR),\V(A_2)]$.
\end{problem}
Every variety in this problem is a proper subvariety of $\cM$ and is locally finite by Theorem~\ref{proper-lf}. Proposition~\ref{descending} provides a descending sequence of finitely based members of this interval with nonfinitely based intersection.

\end{document}